\documentclass[12pt,letterpaper]{article}	

\usepackage{linguex} 
\usepackage{times} 
\usepackage{lipsum} 
\usepackage{ragged2e}
\usepackage{mathtools}
\usepackage{amsmath}
\usepackage{amsthm}
\usepackage{amsfonts}
\usepackage[utf8]{inputenc}
\usepackage{graphicx}
\usepackage{algorithm}
\usepackage{algpseudocode}
\usepackage{float}
\usepackage{subfloat}
\usepackage{subfig}
\usepackage{tikz}
\usetikzlibrary{shapes}
\usepackage{bm}
\usepackage{pgfplots}
\usepackage{cases}

\definecolor{darkRed}{HTML}{960018}
\definecolor{darkBlue}{HTML}{191970}

\usepackage{scrextend} 
\deffootnote[.5em]{0em}{1em}{\textsuperscript{\thefootnotemark}\,}

\newcounter{savefootnote} 
\newcounter{symfootnote}
\newcommand{\symfootnote}[1]{%
	\setcounter{savefootnote}{\value{footnote}}%
	\setcounter{footnote}{\value{symfootnote}}%
	\ifnum\value{footnote}>8\setcounter{footnote}{0}\fi%
	\let\oldthefootnote=\thefootnote%
	\renewcommand{\thefootnote}{\fnsymbol{footnote}}%
	\footnote{#1}%
	\let\thefootnote=\oldthefootnote%
	\setcounter{symfootnote}{\value{footnote}}%
	\setcounter{footnote}{\value{savefootnote}}%
}
\newtheorem{theorem}{Theorem}
\newtheorem{lemma}{Lemma}

\newtheorem{assumption}{Assumption}

\newtheorem{remark}{Remark}

\numberwithin{equation}{section}

\usepackage[labelsep=period]{caption}

\usepackage[margin=1.0in]{geometry}
\usepackage[compact]{titlesec}
\titleformat{\section}[runin]{\normalfont\bfseries}{\thesection.}{.5em}{}[.]
\titleformat{\subsection}[runin]{\normalfont\scshape}{\thesubsection.}{.5em}{}[.]
\titleformat{\subsubsection}[runin]{\normalfont\scshape}{\thesubsubsection.}{.5em}{}[.]
\usepackage[colorlinks,allcolors={black},urlcolor={blue}]{hyperref} 

\usepackage[
backend=bibtex,
style=ieee,
citestyle=numeric,
sorting=nty,
maxbibnames=99,
mincitenames=1,
maxcitenames=2
]{biblatex}
\addbibresource{bibliography.bib}

\renewenvironment{abstract}{%
	\noindent\begin{minipage}{1\textwidth}
		\setlength{\leftskip}{0.4in}
		\setlength{\rightskip}{0.4in}
		\textbf{Abstract.}}
	{\end{minipage}}


\newcommand\blfootnote[1]{%
	\begingroup
	\renewcommand\thefootnote{}\footnote{#1}%
	\addtocounter{footnote}{-1}%
	\endgroup
}

\begin{document}	
	\setlength{\Extopsep}{6pt}
	\setlength{\Exlabelsep}{9pt} 
	
	\begin{center}
		\normalfont\bfseries
		A mixed-dimensional model for the electrostatic problem on coupled domains
		\vskip .5em
		\normalfont
		{Beatrice Crippa$^1$, Anna Scotti$^1$, Andrea Villa$^2$} \\
		\vskip .5em
		\footnotesize{$^1$ MOX-Laboratory for Modeling and Scientific Computing, Department of Mathematics, Politecnico di Milano, 20133 Milan, Italy \\
			$^2$ Ricerca Sul Sistema Energetico (RSE), 20134 Milano, Italy}
        \blfootnote{This work has been financed by the Research Found for the Italian Electrical System under the Contract Agreement between RSE and the Ministry of Economic Development.}
	\end{center}
	
	\justifying
	
	\begin{abstract}
		We derive a mixed-dimensional 3D-1D formulation of the electrostatic equation in two domains with different dielectric constants to compute, with an affordable computational cost, the electric field and potential in the relevant case of thin inclusions in a larger 3D domain. The numerical solution is obtained by Mixed Finite Elements for the 3D problem and Finite Elements on the 1D domain. We analyze some test cases with simple geometries to validate the proposed approach against analytical solutions, and perform comparisons with the fully resolved 3D problem. We treat the case where ramifications are present in the one-dimensional domain and show some results on the geometry of an electrical treeing, a ramified structure that propagates in insulators causing their failure.
	\end{abstract}
	%
		%
	
	\section{Introduction}
	\label{section:introduction}

    The aim of this work is to obtain a geometrically reduced formulation of the electrostatic equation on two coupled domains representing materials with different electrical properties, more specifically, a thin fracture inside a wide three-dimensional domain. In particular, we are interested in modelling the electric field and potential inside the electrical treeing~\cite{bahadoorsingh2007role, buccella2023computational}, which is a self-propagating defect, characterized by long and thin branches~\cite{schurch2014imaging, schurch20193d}, causing the deterioration of insulating components of electrical cables. The defect is filled with gas, with a dielectric constant close to 1, while the external material is typically a solid insulator with higher dielectric constant.
    
    The inclusion of thin and ramified domains within wide three-dimensional volumes is a challenge common to many fields, such as the modeling of fluid flow in fractured porous media~\cite{lesinigo2011multiscale}, microvascular blood flow~\cite{formaggia2010cardiovascular} and drug delivery through microcirculation~\cite{cattaneo2014computational}, besides defect propagation inside dielectric materials. The discretization of problems on such domains involves high computational complexity, due to difficulties in mesh generation and a large number of degrees of freedom. One common approach to reduce this complexity consists in the approximation of the intricate inner domain as a one-dimensional domain, thereby reducing it to its skeleton~\cite{d2008coupling, formaggia2001coupling, cerroni2019mathematical}. This approximation allows to overcome the difficulty of generating a fine three-dimensional mesh on the inner thin domain and to consider a coarser mesh on the external domain as well.~\textcite{notaro2016mixed} proposed a mixed Finite Element Method (FEM) for the solution of coupled problems in 3D and 1D domains, extending the idea of~\textcite{d2012finite} for three-dimensional problems involving line sources.
    The presence of a line source still represents a challenge due to the singularity of the solution, as discussed in \cite{gjerde2020singularity}, where a singularity removal method is proposed.
    For a more accurate numerical solution around the 1D domain on coarser meshes, Extended Finite Elements (XFEM) can be applied, as done in~\textcite{bvrezina2021extended}, ~\textcite{gracie2010modelling} and ~\cite{grappein2024extended} for 3D problems with 1D source, and coupled mixed-dimensional coupled problems. 

    In this work we will reduce the treeing domain to a one-dimensional graph, adapt the full model introduced by~\textcite{VILLA2017687} to a mixed-dimensional framework and numerically solve it with the Finite Element Methods. The reduction of this problem presents some criticalities related to the profile of the electric potential in the gas domain, with a non-negligible dependence on the radial coordinate, and the presence of a jump in the normal component of the electric field across the interface between the two materials. We propose a dual-primal formulation of the problem, modelling the evolution of both electric field and potential in the solid 3D domain and only the potential in the 1D domain, where the electric field can be computed a posteriori, keeping into account also its components non-tangential to the 1D domain. To account for the dependence of the potential in the gas on the radial coordinate we rely on the knowledge of the potential profile associated with a constant, given charge distribution.

    Let us review the structure of the paper. In Section~\ref{section:the_3d_problem} we present the model equation and describe the equidimensional domains in the simple case where the inner one does not present branches. We then derive the mixed-dimensional formulation in Section~\ref{section:electric_field} and establish its well-posedness in Section~\ref{section:reduced}. We extend the model to also account for branches and bifurcations in Section~\ref{section:bifurcations} and introduce the numerical methods for the solution of the complete problem in Section~\ref{section:NumericalMethods}. Finally, we present three test cases in Section~\ref{section:Results}: the validation of the proposed reduction is on a simple geometry with a single one-dimensional line as inner domain, the application of the model to a short line immersed in a cylindrical 3D domain, and finally to the intricate structure of an electrical treeing.
  
	\section{The 3D problem}
	\label{section:the_3d_problem}
	To model the evolution of the electric field $\mathbf{E}$ and potential $\Phi$ on two domains filled with materials with different permittivities we consider the electrostatic equation \cite{griffiths2023introduction}. The problem is defined on a three-dimensional domain $\Omega$, such as the one represented in Figure~\ref{figure:3d-3d:domain}, composed of a subdomain $\Omega_g$, assumed cylindrical, typically filled with gas, surrounded by a solid insulator $\Omega_s$. We call $\Lambda$ the centerline of $\Omega_g$, defined as $\Omega_g = \{\mathbf{x}\ : \ \text{dist}(\mathbf{x},\Lambda) \leq R\}$. This inner domain represents a branch of the electrical treeing, which will be thoroughly treated in the next sections. We suppose that in $\Omega_s$ there is null electric charge, while in the gas the charge density is given by a function $q\ : \ \Omega_g \to \mathbb{R}$. Let us introduce the coefficient $\epsilon:\ \Omega\to\mathbb{R}$, modeling the dielectric constants taking values $\epsilon_s$ and $\epsilon_g$ in the two domains:
	
	\begin{equation*}
		\epsilon = 
		\begin{cases}
			\epsilon_s, &\text{in\ }\Omega_s, \\
			\epsilon_g, &\text{in\ }\Omega_g,
		\end{cases}
	\end{equation*}
	
	\noindent and consider as unknowns of our problem the displacement field $\mathbf{D}=\epsilon\mathbf{E}$ and the potential $\Phi$. We call $\mathbf{D}_s$, $\Phi_s$ and $\mathbf{D}_g$, $\Phi_g$ their restrictions to $\Omega_s$ and $\Omega_g$, respectively. Then, the system of equations we will consider is the following:
	
	\begin{equation}
		\begin{cases}
			\nabla \cdot \mathbf{D}_g = \dfrac{q}{\epsilon_0}, & \text{in\ } \Omega_g, \\
			\nabla \cdot \mathbf{D}_s = 0, & \text{in\ } \Omega_s, \\
			\mathbf{D} = -\epsilon\nabla\Phi, & \text{in\ } \Omega.
		\end{cases}
		\label{eq:3d-3d}
	\end{equation}
	
	\noindent We complete the problem with Neumann and Dirichlet boundary conditions on portions $\partial\Omega_{D}$ and $\partial\Omega_N$ of the boundary $\partial\Omega$, such that $\partial\Omega_D\cap\partial\Omega_N=\emptyset$ and $\partial\Omega_D\cup\partial\Omega_N=\partial\Omega$:
	
	\begin{equation}
		\begin{cases}
			\mathbf{D}\cdot\mathbf{n} = \nu, & \text{on\ } \partial\Omega_N, \\
			\Phi = \Phi^b, & \text{on\ } \partial\Omega_D,
		\end{cases}
		\label{eq:3d-3d:bc}
	\end{equation}
	
	\noindent where $\nu\ :\ \partial\Omega_D \to \mathbb{R} $ and $ \Phi^b : \ \partial\Omega_N\to\mathbb{R} $ are the known Neumann and Dirichlet terms, respectively. The Neumann condition sets the value of the normal component of the displacement field $\mathbf{D}$ on the boundary $\partial\Omega_N$, while the Dirichlet condition fixes the value of the potential $\Phi$ on the boundary $\partial\Omega_D$.
 
    Finally, we impose interface conditions on the surface $\Sigma = \{\mathbf{x}\ : \ \text{dist}(\mathbf{x},\Lambda) = R\}$ separating the two domains. In particular, we consider a simplification of the system proposed in \cite{villa2021discretization} where the displacement field presents a jump in the normal component, proportional to the total surface charge $q_\Gamma$ and the potential is continuous:
	
	\begin{subnumcases}{}
        \mathbf{D}_s\cdot\mathbf{n}_s + \mathbf{D}_g\cdot\mathbf{n}_g = -\dfrac{e}{\epsilon_0} q_\Gamma, & $\text{on\ } \Sigma,$
          \label{eq:3d-3d:ic:1} \\
        \Phi_s = \Phi_g, & $\text{on \ } \Sigma.$
          \label{eq:3d-3d:ic:2}
    \end{subnumcases}
	
	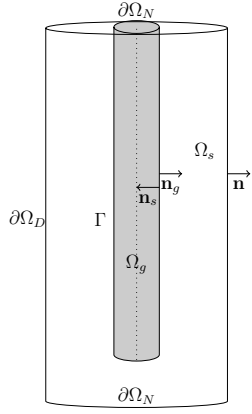
\begin{figure}
		\centering
		\resizebox{.2\textwidth}{!}
		{\begin{tikzpicture}[scale=0.8, every node/.style={scale=0.8}]
				\node (b) [cylinder, shape border rotate=90, draw, minimum height=7.5cm, minimum width=1cm, fill=black!20, line width = .5,yshift = .5cm] {};
				\node (a) [cylinder, shape border rotate=90, draw, minimum height=8.5cm, minimum width=4cm,line width = .5] {};
				\draw [->] (0.5,1) -- (1,1) node[midway,below] {$\mathbf{n}_g$};
				\node [yshift=-1cm] {$\Omega_g$};
				\node [xshift=1.5cm, yshift=1.5cm] {$\Omega_s$};
				\node [yshift=4.6cm] {$\partial\Omega_N$};
				\node [yshift=-3.9cm] {$\partial\Omega_N$};
				\node [xshift=-2.4cm] {$\partial\Omega_D$};
				\node [xshift=-.8cm] {$\Gamma$};
				\draw [->] (2,1) -- (2.5,1) node[midway,below] {$\mathbf{n}$};
				\draw [->] (0.5,0.7) -- (0,0.7) node[midway,below] {$\mathbf{n}_s$};
				\draw[dotted] (0,-3.1) -- (0,4.2);
		\end{tikzpicture}}
		\caption{Domain $\Omega$, given by a cylinder corresponding to the gas subdomain $\Omega_g$, surrounded by a generic volume $\Omega_s$, representing the dielectric domain. }
		\label{figure:3d-3d:domain}
	\end{figure}
	
	\noindent For the sake of simplicity, we start by considering two coaxial cylindrical domains $\Omega_g$ and $\Omega_s$, as in Figure~\ref{figure:3d-3d:domain}. We introduce a parametrization on the centerline $\Lambda$ of $\Omega_g$, so that we can define the coordinate $s\in[0,S]$ along it. In the following, we will assume that, if one endpoint of $\Lambda$ belongs to the external boundary $\partial\Omega$ and the other one is internal to the solid domain $\Omega_s$, the first one coincides with the point of coordinate $s=0$ and the second with $s=S$. For every point $\mathbf{x}_s\in\Lambda$, having coordinate $s\in[0,S]$, we define the transversal section of $\Omega_g$, orthogonal to the centerline, as $\mathcal{D}(s) = \{\mathbf{x}\in\Omega_g\ : \ |\mathbf{x} - \mathbf{x}_s| \leq R \ \text{and} \ (\mathbf{x}-\mathbf{x}_s)\perp \Lambda \}$. In particular, we will denote the basis of the cylinder $\Omega_g$ immersed in the solid domain by $\mathcal{D}(S)$, while the one belonging to the external boundary of $\Omega$ by $\mathcal{D}(0)$. Moreover, we call $\Gamma = \bigcup_{s\in[0,S]}\partial\mathcal{D}(s)$ the lateral surface of $\Omega_g$, so that the separating interface between the two domains in Figure~\ref{figure:3d-3d:domain} is $\Sigma = \Gamma\cup\mathcal{D}(S)$.
 
	If we assume that the inner cylinder is very thin, i.e. its radius in much smaller than its length, we can approximate the coupled problem described above as a mixed-dimensional one, where $\Omega_g$ is collapsed on its one-dimensional centerline $\Lambda$ and $\Omega_s$ is identified with the whole three-dimensional domain $\Omega$. Another simplification we introduce is the assumption that the charge $q$ is constant over sections of $\Omega_g$, orthogonal to the axis of the cylinder, more precisely:
 
	\begin{assumption}
		The gas domain is a cylinder with radius $R$ and length $L$, with $R\ll L$.
		\label{assumption:thin}
	\end{assumption}
	
	\begin{assumption}
		The total charge $q$ in $\Omega$ is constant over sections $\mathcal{D}(s)$ orthogonal to its centerline $\Lambda$, $\forall s\in\Lambda$.
		\label{assumption:constant_q}
	\end{assumption}
	
	\noindent In order to perform this dimensional reduction we start by separating the problems on the two domains, considering as unknowns the restrictions of the electric field and potentials on $\Omega_s$ and $\Omega_g$. At this stage the boundaries becomes the unions of the external boundaries of each domain and the separating surface: 
	
	\begin{equation*}
		\begin{aligned}
			\partial\Omega_s & = (\bar{\Omega}_s\cap \partial\Omega) \cup \Sigma;\\
			\partial\Omega_g & = (\bar{\Omega}_g\cap \partial\Omega) \cup \Sigma.
		\end{aligned}
	\end{equation*}
	
	\noindent The interface conditions on $\Sigma$ become boundary conditions  for the two problems in this framework. 
	
	
	\noindent We can separate the problems on the two domains, coupled by the interface conditions~\eqref{eq:3d-3d:ic:1}-\eqref{eq:3d-3d:ic:2}, and express the problem in the gas domain in primal form:
	
	Find $\mathbf{D}_s\ : \ \Omega_s\to\mathbb{R}^3,\ \Phi_s\ : \ \Omega_s\to\mathbb{R},\ \Phi_g\ : \ \Omega_g\to\mathbb{R}$ such that
	
	\begin{center}
		\begin{minipage}{.48\textwidth}
			\begin{subnumcases}{}
				\mathbf{D}_s + \epsilon_s\nabla \Phi_s = 0, &$  \text{in}\ \Omega_s, $
				\label{eq:P1:dual:1} \\
				\nabla\cdot\mathbf{D}_s = 0, & $ \text{in}\ \Omega_s, $
				\label{eq:P1:dual:2} \\
				\Phi_s = \Phi_g, & $ \text{on}\ \Sigma, $
				\label{eq:P1:dual:3} \\
				\mathbf{D}_s \cdot \mathbf{n}_s = -\mathbf{D}_g \cdot\mathbf{n}_g + g, & $ \text{on}\ \Sigma, $
				\label{eq:P1:dual:4}\\
				\Phi_s = \bar{\Phi}_s, & $ \text{on}\ \partial\Omega_{s,D},  $
				\label{eq:P1:dual:5}\\
				\mathbf{D}_s\cdot\mathbf{n} = \nu_s, & $ \text{on}\ \partial\Omega_{s,N}, $
				\label{eq:P1:dual:6}
			\end{subnumcases}
		\end{minipage}
		\hspace{.5em}
		\begin{minipage}{.48\textwidth}
			\begin{subnumcases}{}
				-\nabla\cdot\left(\epsilon_g\nabla\Phi_g\right) = f, & $ \text{in}\ \Omega_g $,
				\label{eq:P2:primal:1}\\
				\Phi_g = \Phi_s, & $ \text{on}\ \Sigma $,
				\label{eq:P2:primal:2} \\
				\epsilon_g \nabla\Phi_g \cdot \mathbf{n}_s = \mathbf{D}_s \cdot\mathbf{n}_g - g, & $ \text{on}\ \Sigma $,
				\label{eq:P2:primal:3}\\
				\Phi_g = \bar{\Phi}_g, & $ \text{on}\ \partial\Omega_{g,D} $,
				\label{eq:P2:primal:4}\\
				\nabla\Phi_g\cdot\mathbf{n} = \nu_g, & $ \text{on}\ \partial\Omega_{g,N}, $
				\label{eq:P2:primal:5}
			\end{subnumcases}
		\end{minipage}
	\end{center}
	\noindent where $f=\dfrac{q}{\epsilon_0}$, $g = -\dfrac{e}{\epsilon_0}q_\Gamma$, $\bar{\Phi}_s = \Phi\big|_{\partial\Omega_{s,D}}$, $\bar{\Phi}_g = \Phi\big|_{\partial\Omega_{g,D}}$, $\nu_s = \nu\big|_{\partial\Omega_{s,N}}$ and $\nu_g = -\nu\big|_{\partial\Omega_{g,N}}$.\\
    In the following, we will perform the dimensionality reduction by integrating the first equation~\eqref{eq:P2:primal:1} by parts, where the Neumann boundary conditions naturally appear, following the approach of \textcite{cerroni2019mathematical}. This implies that the electric field in the gas domain is not directly computed as an unknown of our problem, but only as a postprocessing.
 
	\section{Model reduction}
	\label{section:electric_field}
	In this section we will derive the reduced mixed-dimensional model for the electric field and potential on two domains, modeled as coaxial cylinders, taking into account the interface conditions that prescribe continuity of the potentials and a jump discontinuity on the normal component of the displacement fields across $\Gamma$. This kind of geometrical reduction is typical of coupled problems describing flow models, with very similar sets of equations as~\eqref{eq:3d-3d}. However, these models generally involve continuity of the scalar unknwon at the interface, as in equation~\eqref{eq:3d-3d:ic:2}, but do not present a jump of the normal component of the vectorial unknown, unlike what we have in equation~\eqref{eq:3d-3d:ic:1}  \cite{grappein2024extended}~\cite{martin2005modeling}. As we will see in Section~\ref{section:reduction_g}, another difference with respect to these works consists in the definition of the 1D variable, which is usually modeled as a constant, whereas in our case we are considering a splitting of $\Phi_g$ that allows us to model its radial variation.

	\subsection{Assumption on the potential}
	\label{section:dual_primal_formulation}
	We start by performing the reduction to one dimension of $\Omega_g$ and adapt the formulation of the problem in primal form in the gas domain (equations~\eqref{eq:P2:primal:1}-\eqref{eq:P2:primal:5}).
    Therefore, we want to end up with an unknown electric potential which is only dependent on the coordinate $s$. However, the hypothesis of a constant potential on each section $\mathcal{D}(s)$, made in the 1D reduction of problems in porous media, such as ~\cite{cerroni2019mathematical}~\cite{grappein2024extended}~\cite{martin2005modeling}, is restrictive because, together with potential continuity at the interface, it would imply that the electric field has only one component, tangent to $\Lambda$. Moreover, the right-hand-side of equation~\eqref{eq:P2:primal:1} represents the volume charge concentration, which was be assumed to be constant on sections (Assumption~\ref{assumption:constant_q}), and produces a non-negligible transversal electric field. As a consequence, the electric potential must be non-constant on sections.\\
	The potential produced by a constant concentration of charge on each section can be analytically computed, thanks to Gauss theorem. We integrate the divergence of the electric field and charge concentration over a cylinder $\mathcal{W}\subseteq\Omega_g$, of radius $r\leq R$ and height $h$, considering $h$ much smaller than the total length of $\Lambda$:   
	\begin{equation}
		\int_{\mathcal{W}} \nabla\cdot\left(\mathbf{E}_g\right) =\int_{\mathcal{W}}\frac{q}{\epsilon_0}.
		\label{gauss}
	\end{equation}
    \noindent Thanks to Assumptions~\ref{assumption:thin} and~\ref{assumption:constant_q}, and since, under the hypothesis of radially symmetric domain, the potential generated by uniform charge distribution has radial symmetry with respect to the center of the cylinder, the electric field line are orthogonal to the lateral surface $\mathcal{S}$ of $\mathcal{W}$. Then, the left hand side of equation~\eqref{gauss} can be rewritten as:
	\begin{equation*}
		\int_{\mathcal{W}} \nabla\cdot\left(\mathbf{E}_g\right) =\int_{\mathcal{S}}\mathbf{E}_g\cdot\mathbf{n}_g =
		-2\pi r \int_{s_1}^{s_2}\mathbf{E}_g(r,s)\cdot\mathbf{n}_g ds.
	\end{equation*}
	
	\noindent Then, equation~\eqref{gauss} is equivalent to
	\begin{equation}\nonumber
		-2\pi r \int_{s_1}^{s_2}\epsilon_g\nabla\Phi_g(r,s)\cdot\mathbf{n}_g ds = \pi r^2 \int_{s_1}^{s_2} \frac{q(s)}{\epsilon_0} ds, \quad \forall r\in(0,R],\ \forall s_1,s_2\in\Lambda,
	\end{equation}
	\noindent and therefore
	\begin{equation}
		\frac{\partial\Phi_g}{\partial r}(r,s) = -\frac{q(s)}{2\epsilon_0\epsilon_g}r, \quad \forall r\in(0,R], \ \forall s\in\Lambda.
	\end{equation}
	
	\noindent We finally integrate with respect to $r$ and obtain the analytic expression of the potential in the gas domain:
	\begin{equation}
		\Phi_g(r,s)-\Phi_g(0,s) = -\int_0^r \frac{q(s)}{2\epsilon_0\epsilon_g}\rho d\rho = -\frac{q(s)}{2\epsilon_0\epsilon_g} \frac{r^2}{2} = -\frac{q(s)}{4\epsilon_0\epsilon_g} r^2, \quad \forall r\in(0,R],\ \forall s\in\Lambda.
	\end{equation}
	
	\noindent Observe that the difference of potential on the left-hand side is continuous in the variable $r$ and tends to vanish as $r$ becomes smaller:
	
	\begin{equation*}
		\lim_{r\rightarrow 0}(\Phi_g(r,s)-\Phi_g(0,s))=0.
	\end{equation*}
	
	\noindent Let us now define the following functions:
	
	\begin{subequations}
		\begin{alignat}{6}
			\Phi_\Lambda \ & : \ \Lambda & \to \mathbb{R}, \quad & \Phi_\Lambda(s) & =&  \Phi_g(0,s),\quad & \forall s\in\Lambda,\\
			\Phi_r \ & : \ \Lambda & \to \mathbb{R}, \quad & \Phi_r(s) & =& -\dfrac{q(s)}{4\epsilon_0\epsilon_g},\quad & \forall s\in\Lambda,
			\label{def:phi_r}\\
			\phi \ & : \ [0,R] & \to \mathbb{R}, \quad & \phi(r) & =& r^2,\quad & \forall r\in[0,R].
			\label{def:phi}
		\end{alignat}
	\end{subequations}
	
	\noindent Here, $\Phi_r$ and $\phi$ are known terms in the expression of $\Phi_g$ and take into account the radial effect of a constant charge distribution on sections, while the dependence on the longitudinal coordinate $s$ is taken into account by the additive term $\Phi_\Lambda$, which is constant over sections:
 
	\begin{equation}
		\Phi_g(s,r) = \Phi_\Lambda(s) + \Phi_r(s)\phi(r),\quad\forall r\in[0,R(s)], \ \forall s\in\Lambda,
		\label{eq:splitting_phi_g}
	\end{equation}
	
	\noindent By substituting this splitting of the potential in the interface conditions~\eqref{eq:P2:primal:2}-~\eqref{eq:P2:primal:3}, we can now rewrite continuity of the potential and jump of the electric field on the lateral surface $\Gamma$ as follows:
	
	\begin{equation}
		\Phi_r = \frac{\Phi_s - \Phi_\Lambda}{\phi(R)},\qquad \text{on}\ \Gamma,
		\label{continuity_phi}
	\end{equation}
	\begin{equation*}
		\mathbf{D}_s\cdot\mathbf{n}_s = g + \epsilon_g\nabla\Phi_g\cdot\mathbf{n}_g = g + \epsilon_g \nabla(\Phi_\Lambda(s) + \Phi_r(s)\phi(r))\cdot \mathbf{n}_g = g +\epsilon_g \Phi_r\phi'(R), \quad \text{on\ }\Gamma,
	\end{equation*}
	
	\noindent and, combining them, we obtain a Robin interface condition:
	
	\begin{equation}
		\mathbf{D}_s\cdot\mathbf{n}_s = g + \epsilon_g \frac{\Phi_s - \Phi_\Lambda}{\phi(R)}\phi'(R), \qquad \text{on}\ \Gamma.
		\label{robin_phi}
	\end{equation}
	
	\noindent Observe that condition~\eqref{continuity_phi} implies that $\Phi_s$ on $\Gamma$ can only depend on the coordinate $s$, which means that the trace $\hat{\Phi}_s$ of $\Phi_s$ on $\Gamma$ can be approximated as a constant on the boundary of each section $\mathcal{D}$ by its integral mean. 

    \begin{equation}
		\hat{\Phi}_s(s) := \frac{1}{|\partial \mathcal{D}(s)|}\int_{\partial \mathcal{D}(s)}\Phi_s.
        \label{eq:phi_mean}
	\end{equation}
    Moreover, since $\Omega_g$ is thin (Assumption~\ref{assumption:thin}), if necessary we will extend $\Phi_s$ inside $\Omega_g$ as a constant.

	\subsection{Reduction of the equation in the gas}
	\label{section:reduction_g}
	
	\noindent If we substitute equation~\eqref{eq:splitting_phi_g} in equation~\eqref{eq:P2:primal:1}, we obtain:
	
	\begin{equation}
		\begin{split}
			\nabla\cdot\left( \epsilon_g\nabla\Phi_g \right) &=
			\dfrac{\epsilon_g}{r}\dfrac{\partial}{\partial r}\left( r\dfrac{\partial\Phi_g}{\partial r} \right) + \frac{\partial}{\partial s} \left(\epsilon_g \dfrac{\partial\Phi_g}{\partial s}\right) = \\
			&= \epsilon_g \left(\frac{1}{r}\frac{\partial \Phi_g}{\partial r} + \frac{\partial^2 \Phi_g}{\partial r^2}\right) + \frac{d \epsilon_g}{d s}\frac{\partial\Phi_g}{\partial s} + \epsilon_g\frac{\partial^2 \Phi_g}{\partial s^2}= \\
			&= \epsilon_g \left(\frac{1}{r}\frac{\partial \Phi_g}{\partial r} + \frac{\partial^2 \Phi_g}{\partial r^2} + \dfrac{\partial^2\Phi_g}{\partial s^2} \right) + \frac{d\epsilon_g}{d s}\frac{\partial\Phi_g}{\partial s}.
		\end{split}
		\label{eq:gas:1}
	\end{equation}
	
	\noindent Since the only dependence of $\Phi_g$ on $r$ is due to $\phi$, its partial derivatives with respect to this coordinate are simply:
	
	\begin{equation*}
			\dfrac{\partial \Phi_g}{\partial r} = \Phi_r \dfrac{d\phi}{d r} = 2r\Phi_r, \quad
			\dfrac{\partial^2 \Phi_g}{\partial r^2} = \Phi_r \dfrac{d^2\phi}{d r^2} = 2\Phi_r.
	\end{equation*}
	
	\noindent The dependence on $s$, instead is due to $\Phi_\Lambda$ and $\Phi_r$ and the corresponding partial derivatives are given by:
	
	\begin{equation}
        \dfrac{\partial\Phi_g}{\partial s} = \dfrac{\partial \Phi_\Lambda}{\partial s} + \dfrac{\partial\Phi_r}{\partial s}\phi, \quad
        \dfrac{\partial^2\Phi_g}{\partial s^2} = \dfrac{d^2 \Phi_\Lambda}{d s^2} + \dfrac{d^2\Phi_r}{d s^2}\phi.
        \label{eq:dphig_ds}
    \end{equation}
	
	\noindent We can substitute these expression into equation~\eqref{eq:gas:1} and obtain:
	
	\begin{multline}
		\nabla\cdot(\epsilon_g\nabla\Phi_g)
		= \epsilon_g\left( 4\Phi_r + \frac{d^2\Phi_\Lambda}{ds^2} + r^2\frac{d^2\Phi_r}{ds^2}\right) + \frac{d\epsilon_g}{d s}\left(\frac{d\Phi_\Lambda}{d s} + r^2\frac{d\Phi_r}{d s} \right), \quad \forall r<R, \ \forall s\in\Lambda,
		\label{eq:gas:2}
	\end{multline}
	\noindent which implies
	
	\begin{equation*}
		\epsilon_g\left( 4\Phi_r + \frac{d^2\Phi_\Lambda}{ds^2} + r^2\frac{d^2\Phi_r}{ds^2}\right) + \frac{d\epsilon_g}{d s}\left(\frac{d\Phi_\Lambda}{d s} + r^2\frac{d\Phi_r}{d s} \right) = -\dfrac{q(s)}{\epsilon_0}, \quad \forall r<R, \ \forall s\in\Lambda.
	\end{equation*}
	
	\noindent Note that this equation is not pointwise satisfied, since the left-hand side depends on $r$ but the right-hand side does not, as a consequence of Assumption~\ref{assumption:constant_q}, neglecting the radial profile of the charge concentration $q$; in fact this equation is only satisfied in mean, over sections $\mathcal{D}(s), \ \forall s\in\Lambda$. Thus, we start by integrating equation~\eqref{eq:P2:primal:1}, combined with equation~\eqref{eq:gas:2} over a section $\mathcal{D}$:

	\begin{equation*}
		\begin{split}
			&\int_{\mathcal{D}(s)} \nabla\cdot\left( \epsilon_g(s)\nabla \Phi_g \right) = \\
			&= \int_{\mathcal{D}(s)} \left[\epsilon_g(s)\left( 4\Phi_r(s) + \frac{d^2\Phi_\Lambda}{ds^2}(s) + r^2\frac{d^2\Phi_r}{ds^2}(s)\right) + \frac{d\epsilon_g}{d s}(s)\left(\frac{d\Phi_\Lambda}{d s}(s) + r^2\frac{d\Phi_r}{d s}(s) \right) \right]
		\end{split}
	\end{equation*}
	
	\noindent Since the first integrand is constant over sections, while the second one depends on $r$, this equation becomes:
	
	\begin{equation}
		\begin{split}
			\int_{\mathcal{D}(s)} \nabla\cdot\left( \epsilon_g(s)\nabla \Phi_g \right) &= \\
			&\begin{split}
				= |\mathcal{D}(s)| &\left[ \epsilon_g(s)\left( 4\Phi_r(s) + \frac{d^2\Phi_\Lambda}{ds^2}(s)\right) + \frac{d\epsilon_g}{d s}(s)\frac{d\Phi_\Lambda}{d s}(s) \right] +\\
				&+ \epsilon_g(s) \frac{d^2\Phi_r}{d s^2}(s) \int_{\mathcal{D}(s)}r^2 + \frac{d\epsilon_g}{d s}(s) \frac{d\Phi_r}{d s}(s) \int_{\mathcal{D}(s)}r^2 =
			\end{split} \\
			&\begin{split}
				= \pi R^2 &\left[ \epsilon_g(s)\left( 4\Phi_r(s) + \frac{d^2\Phi_\Lambda}{ds^2}(s)\right) + \frac{d\epsilon_g}{d s}(s)\frac{d\Phi_\Lambda}{d s}(s) \right] +\\
				&+  \pi \frac{R^4}{2} \left(\epsilon_g(s) \frac{d^2\Phi_r}{d s^2}(s) + \frac{d\epsilon_g}{d s}(s) \frac{d\Phi_r}{d s}(s)\right), \quad\forall s \in \Lambda.
			\end{split}
		\end{split}
        \label{eq:1d:derivation}
	\end{equation}
	
	\noindent The continuity condition~\eqref{continuity_phi} allows us to replace $\Phi_r$ in the first term with the difference between $\Phi_\Lambda$ and $\Phi_s$ on the interface, where $\Phi_s$ is equivalent to its integral mean $\hat{\Phi}_s$, according to equation~\eqref{eq:phi_mean}:
	
	\begin{multline*}
		\pi R^2 \left[ \epsilon_g(s)\left( 4\Phi_r(s) + \frac{d^2\Phi_\Lambda}{ds^2}(s)\right) + \frac{d\epsilon_g}{d s}(s)\frac{d\Phi_\Lambda}{d s}(s) \right] = \\
		= \pi R^2\left[ \epsilon_g(s)\left( 4\frac{\hat{\Phi}_s - \Phi_\Lambda(s)}{R^2} + \frac{d^2\Phi_\Lambda}{ds^2}(s)\right) + \frac{d\epsilon_g}{d s}(s)\frac{d\Phi_\Lambda}{d s}(s) \right].
	\end{multline*}
	
	\noindent We integrate now also the right-hand side of equation~\eqref{eq:P2:primal:1} over a section, recalling that the source term is given by the total charge $f = \frac{q(s)}{\epsilon_0}$ on $\mathcal{D}(s)$, $\forall s\in\Lambda$:
	
	\begin{equation}\nonumber
		\int_{\mathcal{D}(s)} f = \frac{q(s)}{\epsilon_0} |\mathcal{D}(s)| = \frac{q(s)}{\epsilon_0} \pi R^2, \qquad \forall s \in \Lambda.
	\end{equation}
	
	\noindent Moreover, subsituting $\dfrac{d}{ds}\left(\epsilon_g\dfrac{d\phi_\Lambda}{ds}\right) = \epsilon_g\dfrac{d^2\Phi_\Lambda}{ds^2} + \dfrac{d\epsilon_g}{ds}\dfrac{d\Phi_\Lambda}{ds}$ and $\dfrac{d}{ds}\left(\epsilon_g\dfrac{d\Phi_r}{ds}\right)= \epsilon_g\dfrac{d^2\Phi_r}{ds^2} + \dfrac{d\epsilon_g}{ds}\dfrac{d\Phi_r}{ds}$ in equation~\eqref{eq:1d:derivation}, we obtain the final formulation of the one-dimensional equation in the gas domain:
	
	\begin{equation*}
		-\pi R^2 \frac{d}{ds}
		\left(\epsilon_g(s)\frac{d\Phi_\Lambda}{d s}(s)\right) + 4\pi \epsilon_g(s) \left(\Phi_\Lambda(s) - \hat{\Phi}_s(s)\right)
		= \pi R^2 \frac{q(s)}{\epsilon_0} + \dfrac{\pi R^4}{2} \dfrac{d}{ds}\left( \epsilon_g\dfrac{d\Phi_r}{ds} \right), \quad \forall s \in \Lambda.
	\end{equation*}
	
	\noindent Finally, since we are considering $R\to 0$, we can drop the higher order term $\dfrac{\pi R^4}{2} \dfrac{d}{ds}\left( \epsilon_g\dfrac{d\Phi_r}{ds} \right)$ on the right-hand side of the equation and get:
	
	\begin{equation}
		-\pi R^2 \frac{d}{ds}
		\left(\epsilon_g(s)\frac{d\Phi_\Lambda}{d s}(s)\right) + 4\pi \epsilon_g(s) \left(\Phi_\Lambda(s) - \hat{\Phi}_s(s)\right) = \pi R^2 \frac{q(s)}{\epsilon_0}, \qquad \forall s \in \Lambda.
		\label{eq:gas:3}
	\end{equation}
	
	\noindent The one-dimensional domain coincides with the centerline $\Lambda$ of $\Omega_g$ and the boundary conditions on it must be imposed only on the two endpoints $s=0$ and $s=S$. The interface conditions on $\Gamma$ were incorporated through the previous calculations into the governing equation and resulted in a coupling reaction term. We are only left to reduce to one dimension the Dirichlet condition on $\mathcal{D}(0)$ and the interface conditions on $\mathcal{D}(S)$.
	
	On $\mathcal{D}(0)$ we can rewrite the boundary condition introducing the splitting of $\Phi_g$ from equation~\eqref{eq:splitting_phi_g}: 
	
	\begin{equation*}
		\bar{\Phi}_g = \Phi_g\big|_{\mathcal{D}(0)} = \Phi_\Lambda(0) + \Phi_r(0)\phi(r) = \Phi_\Lambda(0) + \Phi_r(0) r^2,
		\quad r\in[0,R].
	\end{equation*}
	
	\noindent We can integrate over $\mathcal{D}(0)$ and obtain
	
	\begin{equation}\nonumber
		\int_{\mathcal{D}(0)} \bar{\Phi}_g =
		\pi R^2 \Phi_\Lambda(0) - 2\pi \frac{R^4}{16\epsilon_0\epsilon_g}q(0).
	\end{equation}
	
	\noindent If we denote by $\hat{\bar{\Phi}}_g$ the integral of $\bar{\Phi}_g$ over $\mathcal{D}(0)$:
	\begin{equation}\nonumber
		\hat{\bar{\Phi}}_g := \int_{\mathcal{D}(0)} \bar{\Phi}_g,
	\end{equation}
	
	\noindent we can write the boundary condition on $s=0$ as follows:
	
	\begin{equation*}
		\Phi_\Lambda(0) = \hat{\bar{\Phi}}_g - 2\pi \frac{R^4}{16\epsilon_0\epsilon_g}q(0).
	\end{equation*}
	
	\noindent Observe that $\Phi_\Lambda(0) \rightarrow \hat{\bar{\Phi}}_g, \ \text{as\ } R\rightarrow 0$.
	Then, the Dirichlet boundary condition on $s=0$ can be approximated as:
	
	\begin{equation}
		\Phi_\Lambda(0) = \hat{\bar{\Phi}}_g.
		\label{eq:gas:dirichlet}
	\end{equation}
	
	\noindent Let us now construct a Neumann boundary condition at $s=S$ by integrating the jump condition given by equation~\eqref{eq:P2:primal:3} on $\mathcal{D}(S)$:
	
	\begin{equation}\nonumber
		\int_{\mathcal{D}(S)}\mathbf{D}_s\cdot\mathbf{s} =
		\int_{\mathcal{D}(S)}g + \int_{\mathcal{D}(S)}\epsilon_g\nabla\Phi_g\cdot\mathbf{s} =
		|\mathcal{D}(S)| g(S) + \int_{\mathcal{D}(S)}\epsilon_g\nabla\Phi_g\cdot\mathbf{s}.
	\end{equation}
	
	\noindent We can substitute $\mathbf{D}_s$ with $-\epsilon_s\nabla\Phi_s$, and $\nabla\Phi_g\cdot\mathbf{s}$ with its expression~\eqref{eq:dphig_ds}, and the previous equation becomes:
	
	\begin{equation*}
		\begin{split}
			-\int_{\mathcal{D}(S)}\epsilon_s\frac{\partial\Phi_s}{\partial s} &=
			|\mathcal{D}(S)|g(S) + |\mathcal{D}(S)| \epsilon_g(S) \dfrac{d\Phi_\Lambda}{ds}(S) +\epsilon_g(S) \dfrac{d\Phi_r}{ds}(S)\int_{\mathcal{D}(S)} \phi(r) = \\
            & = \pi R^2 g(S) + \pi R^2 \epsilon_g(S) \dfrac{d\Phi_\Lambda}{ds}(S) + \dfrac{\pi R^4}{4} \epsilon_g(S) \dfrac{d\Phi_r}{ds}(S)
		\end{split}
	\end{equation*}
	
	\noindent Thanks to Assumption~\ref{assumption:thin} of thin gas domain, we can approximate the potential $\Phi_s$ as a constant on the whole section, as in equation~\eqref{eq:phi_mean} and neglect the higher order term $\dfrac{\pi R^4}{4} \epsilon_g(S) \dfrac{d\Phi_r}{ds}(S)$:
	
	\begin{equation}
		-\epsilon_s\frac{d\hat{\Phi}_s}{ds}(S) =  g(S) + \epsilon_g \frac{d\Phi_\Lambda}{ds}(S).
        \label{eq:gas:neumann:phis}
	\end{equation}
	
	\noindent Collecting~\eqref{eq:gas:3},~\eqref{eq:gas:dirichlet} and~\eqref{eq:gas:neumann:phis}, we obtain the 1D version of problem~\eqref{eq:P2:primal:1}-~\eqref{eq:P2:primal:4}:
	
	\begin{equation}
		\begin{dcases}
			-\pi R^2 \dfrac{d}{ds}\left(\epsilon_g\dfrac{d\Phi_\Lambda}{d s}\right) + 4\pi \epsilon_g \left(\Phi_\Lambda - \hat{\Phi}_s\right) 
			= \pi R^2 \dfrac{q}{\epsilon_0}, \qquad \text{on}\ \Lambda,\\
			\Phi_\Lambda(0) = \hat{\bar{\Phi}}_g, \\
			\epsilon_g \frac{d\Phi_\Lambda}{ds}(S) = -\epsilon_s\frac{d\hat{\Phi}_s}{ds}(S) -g(S).
		\end{dcases}
		\label{eq:P2:primal:1d}
	\end{equation}
	
	\subsection{Reduction of the problem in the dielectric}
	\label{section:reduction_s}
	
	Consider now the problem~\eqref{eq:P1:dual:1}-~\eqref{eq:P1:dual:6} in the dielectric domain $\Omega_s$ in dual form, with the Robin interface condition~\eqref{robin_phi}. Since in the previous section we have reduced the gas domain to a one-dimensional line, in the final formulation of the problem obtained in this section we will extend the dielectric domain $\Omega_s$ to the whole domain $\Omega$. Let us now define the functional spaces to which $\mathbf{D}_s$ and $\Phi_s$ belong as $H(\text{div};\Omega)$ and $W_s:=L^2(\Omega_s)$, respectively. Notice that, however, $\nabla\Phi_s = -\epsilon_s^{-1}\mathbf{D}_s\in L^2(\Omega_s)$ and therefore $\Phi_s\in H^1(\Omega_s)$. We substitute equation~\eqref{eq:P1:dual:1} into equation~\eqref{eq:P1:dual:2} and multiply it by a test function belonging to the same space of $\Phi_s$, $w_s\in H^1(\Omega_s)$, and integrate by parts over $\Omega_s$:
	
	\begin{multline}\nonumber
		0 = \int_{\Omega_s} \nabla\cdot\mathbf{D}_s w_s = - \int_{\Omega_s} \mathbf{D}_s \cdot \nabla w_s + \int_{\partial\Omega_{s,D}} w_s \mathbf{D}_s \cdot \mathbf{n} +  \int_{\partial\Omega_{s,N}} w_s \mathbf{D}_s \cdot \mathbf{n} + \\ +  \int_{\Gamma} w_s \mathbf{D}_s \cdot \mathbf{n}_s + \int_{\mathcal{D}(S)} w_s \mathbf{D}_s \cdot \mathbf{s}, \qquad \forall w_s\in H^1(\Omega_s).
	\end{multline}
	
	\noindent Imposing the Neumann boundary condition on $\partial\Omega_{s,N}$ and Assumption~\ref{assumption:null_flux}, we obtain the following expression:
	
	\begin{equation}
		- \int_{\Omega_s} \mathbf{D}_s \cdot \nabla w_s + \int_{\Gamma} w_s \mathbf{D}_s \cdot \mathbf{n}_s = - \int_{\partial\Omega_{s,N}} w_s \nu, \qquad \forall w_s \in H^1_{0,\partial\Omega_{s,D}}(\Omega_s), 
		\label{eq:P1':weak:1}
	\end{equation}
	
	\noindent where $H^1_{0,\partial\Omega_{s,D}}(\Omega_s) := \{w\in H^1(\Omega_s)\ :\ w=0 \text{\ on \ } \partial\Omega_{s,D}\}$. Now we can apply the Robin condition~\eqref{robin_phi} and obtain:
	
	\begin{multline}
		\int_\Gamma w_s \mathbf{D}_s\cdot\mathbf{n}_s = \int_\Gamma w_s \left(g + \epsilon_g\frac{\Phi_s-\Phi_\Lambda}{\phi(R)}\phi'(R)\right) = \\
		= \int_\Lambda g \int_ {\partial\mathcal{D}}w_s  + \int_\Lambda \epsilon_g \frac{\phi'(R)}{\phi(R)}\int_{\partial\mathcal{D}}w_s \Phi_s - \int_\Lambda \epsilon_g \frac{\phi'(R)}{\phi(R)}\Phi_\Lambda \int_{\partial\mathcal{D}}w_s.
		\label{eq:P1':weak:2}
	\end{multline}
	
	\noindent Assume that the electric potential $\Phi_s$ and the test functions $w_s\in H^1_{0,\partial\Omega_{s,D}}(\Omega_s)$ on $\partial\mathcal{D}(s), \ s\in\Lambda,$ can be written as the sum of their integral mean over $\partial\mathcal{D}(s)$, which is constant on $\partial\mathcal{D}(s)$ and approximately equal to the respective mean over $\mathcal{D}(s)$, and a non-constant fluctuation around it:
	
	\begin{equation*}
		\Phi_s = \hat{\Phi}_s + \tilde{\Phi}_s; \qquad w_s = \hat{w}_s + \tilde{w}_s, \qquad \text{on\ } \Gamma,
	\end{equation*}
	
	\noindent and make the following assumption, similar to the one made by \textcite{cerroni2019mathematical}, on the fluctuations:
	
	\begin{assumption}
		The fluctuations of functions in $W_s$ around their integral mean on $\partial\mathcal{D}(s)$ have zero mean for all $s\in\Lambda$, and so does the product of the fluctuations of two different functions in $W_s$, i.e.
		\begin{equation}\nonumber
			\forall v=\hat{v}+\tilde{v},w=\hat{w}+\tilde{w}\in W_s, \qquad \int_{\partial\mathcal{D}} \tilde{w} \approx 0 \quad \text{and} \quad \int_{\partial\mathcal{D}} \tilde{v} \tilde{w} \approx 0.
		\end{equation}
		\label{assumption:fluctuations}
	\end{assumption}
	
	\noindent As a consequence, we can rewrite the integrals over $\partial\mathcal{D}$ in equation~\eqref{eq:P1':weak:2} as:
	\begin{equation}\nonumber
		\int_{\partial\mathcal{D}} w_s = \int_{\partial\mathcal{D}} \hat{w}_s + \int_{\partial\mathcal{D}} \tilde{w}_s \approx \int_{\partial\mathcal{D}} \hat{w}_s = |\partial\mathcal{D}| \hat{w}_s,
	\end{equation}
	\noindent and
	\begin{equation}\nonumber
		\int_{\partial\mathcal{D}} \Phi_s w_s = \int_{\partial\mathcal{D}} \hat{w}_s \hat{\Phi}_s + \int_{\partial\mathcal{D}} \tilde{w}_s \tilde{\Phi}_s + \hat{w}_s \int_{\partial\mathcal{D}} \tilde{\Phi}_s + \hat{\Phi}_s \int_{\partial\mathcal{D}} \tilde{w}_s \approx \int_{\partial\mathcal{D}} \hat{w}_s\hat{\Phi} = |\partial\mathcal{D}| \hat{w}_s\hat{\Phi}_s.
	\end{equation}
	
	\noindent We can substitute these integrals in~\eqref{eq:P1':weak:2} and back into equation~\eqref{eq:P1':weak:1} and obtain the following weak equation:
	
	\begin{equation}\nonumber
		-\int_{\Omega_s} \epsilon_s\mathbf{D}_s\cdot\nabla w_s + \int_\Lambda \epsilon_g\frac{\phi'(R)}{\phi(R)} \hat{w}_s |\partial\mathcal{D}|\left(\hat{\Phi}_s - \Phi_\Lambda\right) = -\int_\Lambda g \hat{w}_s |\partial\mathcal{D}|,  \qquad \forall w_s \in H^1_{0,\partial\Omega_{s,D}}(\Omega_s).
	\end{equation}
	
	\noindent Substituting now $\phi$ and its derivative with their analytical expressions and $|\partial\mathcal{D}|=2\pi R$, we obtain the final weak formulation of the primal problem in the dielectric domain:
	
	\begin{equation}
		\int_{\Omega} \epsilon_s\mathbf{D}_s\cdot\nabla w_s + 4\pi \epsilon_g\int_\Lambda \hat{w}_s\left(\Phi_\Lambda - \hat{\Phi}_s \right) = 2\pi R\int_\Lambda g \hat{w}_s, \qquad \forall w_s \in H^1_{0,\partial\Omega_{s,D}}(\Omega_s).
		\label{problem:weak:eq2}
	\end{equation}
	
	\noindent In order to go back to the strong problem and to the dual mixed formulation, we can integrate back by parts~\eqref{problem:weak:eq2} and obtain:
	
	\begin{equation}\nonumber
		\int_\Omega \left(-\nabla\cdot\mathbf{D}_sw_s + 4\pi  \epsilon_g(\Phi_\Lambda-\hat{\Phi}_s)\hat{w}_s\delta_\Lambda \right) = \int_\Omega 2\pi R g \hat{w}_s \delta_\Lambda, \qquad \forall w_s = \hat{w}_s +\tilde{w}_s \in H^1_{0,\partial\Omega_{s,D}}(\Omega_s).
	\end{equation}
	
	\noindent In particular, this holds for $w_s = \hat{w}_s$.\\
	We have obtained a strong equation in the whole domain, with a line source term on $\Lambda$:
	
	\begin{equation}\nonumber
		\nabla\cdot\mathbf{D}_s - 4\pi  \epsilon_g(\Phi_\Lambda-\hat{\Phi}_s)\delta_\Lambda = -2\pi R g  \delta_\Lambda, \qquad \text{in}\ \Omega.
	\end{equation}
	
	\noindent If we simplify the coefficients of this equation and substitute $\mathbf{D}_s\cdot\mathbf{n}_s = -\epsilon_s\nabla\Phi_s$, we retrieve the strong dual mixed formulation of the problem in the dielectric domain with a line source concentrated on $\Lambda$ and a coupling term with the 1D problem~\eqref{eq:P2:primal:1d}:
	
	\begin{equation}
		\begin{dcases}
			\epsilon_s^{-1} \mathbf{D}_s + \nabla\Phi_s = 0, & \text{in}\ \Omega, \\
			\nabla\cdot \mathbf{D}_s - 4\pi  \epsilon_g(\Phi_\Lambda-\hat{\Phi}_s)\delta_\Lambda = -2\pi Rg\delta_\Lambda, &\text{in}\ \Omega, \\
			\Phi_s = \bar{\Phi}_s, & \text{on}\ \partial\Omega_{D}, \\
			\mathbf{D}_s\cdot\mathbf{n} = \nu, & \text{on}\ \partial\Omega_N.
		\end{dcases}
		\label{P1':1d}
	\end{equation}
	
	\section{Reduced 3D-1D coupled problem}
	\label{section:reduced}
	
	\noindent The final mixed-dimensional dual-primal coupled problem is the following:
	
		\begin{subnumcases}{}
			\epsilon_s^{-1} \mathbf{D}_s  + \nabla\Phi_s = 0, & $\text{in}\ \Omega, $
		\label{eq:3d-1d:1}\\
			\nabla\cdot \mathbf{D}_s - 4\pi \epsilon_g(\Phi_\Lambda - \hat{\Phi}_s)\delta_\Lambda = -2\pi Rg\delta_\Lambda, & $\text{in}\ \Omega, $
		\label{eq:3d-1d:2}\\
				-\pi R^2 \dfrac{d}{ds}\left(\epsilon_g\dfrac{d\Phi_\Lambda}{d s}\right) + 4\pi \epsilon_g \left(\Phi_\Lambda - \hat{\Phi}_s\right) 
				= \pi R^2 \dfrac{q}{\epsilon_0} 
			& $\text{on}\ \Lambda, $
		\label{eq:3d-1d:3}\\
			\Phi_\Lambda(0) = \hat{\bar{\Phi}}_g, 
		\label{eq:3d-1d:4}\\
			\epsilon_g\dfrac{d\Phi_\Lambda}{ds} (S) =
			- g(S) - \epsilon_s\dfrac{d\hat{\Phi}_s}{ds}(S), 
		\label{eq:3d-1d:5}\\
			\Phi_s = \bar{\Phi}_s, & $\text{on}\ \partial\Omega_{D},$
		\label{eq:3d-1d:6}\\
			\mathbf{D}_s\cdot\mathbf{n} = \nu, & $\text{on}\ \partial\Omega_N.$
		\label{eq:3d-1d:7}
		\end{subnumcases}

    \noindent Note that, despite the differences in the derivation, similar coupling terms between problems in mixed-dimensional domains can also be found in the context of fluid flow, \cite{notaro2016mixed} \cite{gjerde2019splitting}.

    We can observe that not only is a coupling between the problems in the two domains present in the equations~\eqref{eq:3d-1d:2} and~\eqref{eq:3d-1d:3}, but also it appears at the tip of the one-dimensional reduced domain as a Neumann condition, similar to the jump interface condition~\eqref{eq:3d-3d:ic:1}. However, the value of the coefficient $4\pi\epsilon_g$ makes the coupling term in equation~\eqref{eq:3d-1d:3} predominant in the evolution of $\Phi_g$ and the coupling at the tip of $\Lambda$ negligible. Moreover, since the area $\mathcal{D}(S)$ is small, according to Assumption~\ref{assumption:thin}, we can assume that the flux exchange between the gas and the dielectric domain happens mostly through the lateral surface $\Gamma$ of $\Omega_g$ and the contribution across $\mathcal{D}(S)$ is negligible:
        
    \begin{assumption}
        The flux of the electric field across $\mathcal{D}(S)$ is negligible, i.e. $\int_{\mathcal{D}(S)} \nabla\Phi_s\cdot\mathbf{s} \approx 0.$ 
        \label{assumption:null_flux}
    \end{assumption}
    
    \noindent This way we obtain an alternative boundary condition for the 1D problem, which is simply:
    
    \begin{equation}
        \frac{d\Phi_\Lambda}{ds}(S) = 
        -\frac{1}{\epsilon_g} g(S).
        \label{eq:gas:neumann}
    \end{equation}
    
    \begin{remark}
        Alternatively, we could also choose to substitute the coupling term in equations~\eqref{eq:3d-1d:2} and~\eqref{eq:3d-1d:3} by the known quantity $\Phi_r\phi(R)$, as in~\eqref{continuity_phi}. In this case, we would not be allowed to rely on Assumption~\ref{assumption:null_flux} without actually decoupling the two problems, and we would need keep condition~\eqref{eq:3d-1d:5} as it is, introducing a weak coupling at the tip of the 1D domain. Moreover, in this case we would not be imposing the condition~\eqref{continuity_phi} of continuity of the potentials across the interface $\Gamma$ between the two original domains, and should take it into account as one additional equation.
    \end{remark}

    \begin{remark}
        Under the Assumptions~\ref{assumption:thin},~\ref{assumption:constant_q},~\ref{assumption:fluctuations}, the effect of a uniform volume charge distribution in a cylinder and of a line charge distribution on its centerline are equivalent outside of $\Omega_g$. Indeed, the flux of the displacement field $\mathbf{D}_s$ on a cylindrical surface $\partial \mathcal{W}$ surrounding $\Omega_g$ remains the same in the two cases. 
        We can compute it by applying the Gauss theorem to the original 3D-3D problem, taking into account Assumption~\ref{assumption:constant_q} of constant charge over sections of $\Omega_g$:
        \begin{equation}
            \int_{\partial\mathcal{W}}\mathbf{D}_s\cdot\mathbf{n} = \int_{\mathcal{W}}\dfrac{q}{\epsilon_0} = \int_\Lambda \dfrac{q}{\epsilon_0} |\mathcal{B}|, 
            \label{eq:remark:3d-3d}
        \end{equation}
        \noindent where $\mathcal{B}$ denotes a transversal section of $\mathcal{W}$. If we do the same on the reduced 3D-1D problem, we obtain:
        \begin{equation*}
            \int_{\partial\mathcal{W}} \mathbf{D}_s\cdot\mathbf{n} = \int_\mathcal{W}\dfrac{q}{\epsilon_0}\delta_\Lambda = \int_\Lambda \dfrac{q}{\epsilon_0} |\mathcal{B}|,
        \end{equation*}
        \noindent which is equivalent to equation~\eqref{eq:remark:3d-3d}.
    \end{remark}
	
	\subsection{Well-posedness of the dual-primal problem}
	In this section we will prove that the coupled 3D-1D problem~\eqref{eq:3d-1d:1}-~\eqref{eq:3d-1d:7} admits a unique weak solution.\\
	A similar problem, in primal form, was proven to be well-posed by \textcite{dangelo2008coupling} on weighted Sobolev spaces. As we will see in the following section, we do not need to define a continuous trace operator and, consequently, to work with such spaces.\\
	The well-posedness of a similar problem in mixed dimensions was also studied by \textcite{bvrezina2021extended}, where however the one-dimensional equation is not expressed in primal form, as we have in equation~\eqref{eq:3d-1d:3}. Mixed-dimensional problems in dual-primal form can be found in~\cite{formaggia2018analysis} and~\cite{antonietti2016mimetic}, but defined on subdomains of codimension 1.
 
	We start by gathering the weak formulation of problem~\eqref{eq:3d-1d:1}-\eqref{eq:3d-1d:7}, imposing the Neumann boundary condition on $\partial\Omega_N$ with the Lagrange multiplier $\xi=-\mathbf{D}_s\cdot\mathbf{n}\big|_{\partial\Omega_N}$. Let us define the following spaces:

    \begin{equation*}
        H^{1/2}(\partial\Omega_N) = \{ \phi \big|_{\partial\Omega_N} \ : \ \phi\in H^1(\Omega) \};
    \end{equation*}
	
	\begin{equation*}
		W_\Lambda = H^1(\Lambda) = \left\{ \psi \in L^2(\Lambda) \ :\ \dfrac{d\psi}{ds} \in L^2(\Lambda)\right\};
	\end{equation*}
	
	\begin{equation*}
        W_s = L^2(\Omega);
	\end{equation*}
 
	\begin{equation*}
		V_s = \{ \mathbf{v}\in H(\text{div};\Omega) \ :\ \mathbf{v}\cdot\mathbf{n} \in L^2(\partial\Omega) \},
	\end{equation*}
	
	\noindent where $V_s$ is a subspace of $H(\text{div};\Omega) = \{\mathbf{v}\in L^2(\Omega) \ : \ \nabla\cdot\mathbf{v}\in L^2(\Omega) \}$ that takes into account the boundary conditions on $\partial\Omega_N$. Note that we have required extra regularity on the trace of $\mathbf{v}\cdot\mathbf{n}$ on the boundary.\\
    On these spaces we consider the norms $\|\lambda\|_{H^{1/2}(\partial\Omega_N)} = \inf\{\|\phi\|_{H^1(\Omega)}\ : \ \phi\in H^1(\Omega), \ \phi\big|_{\partial\Omega_N} = \lambda \}$, $\|\psi\|_{W_\Lambda^0} = \|\psi\|_{L^2(\Lambda)}$, $\|\phi\|_{W_s} = \|\phi\|_{H^1(\Omega)}$ and $\|\mathbf{v}\|_{V_s}^2 = \|\nabla\cdot\mathbf{v}\|_{L^2(\Omega)}^2 + \|\mathbf{v}\|_{L^2(\Omega)}^2 + \|\mathbf{v}\cdot\mathbf{n}\|_{L^2(\partial\Omega)}^2$.\\
	
	If we integrate by parts equations~\eqref{eq:3d-1d:1}-\eqref{eq:3d-1d:3}, substitute the boundary conditions~\eqref{eq:3d-1d:4}-\eqref{eq:3d-1d:7} and exploit Assumption~\ref{assumption:fluctuations}, as in  the derivation of equation~\eqref{problem:weak:eq2}, we obtain the following problem:\\
	
	Find $\left( \left(\mathbf{D}_s, \xi\right), \Phi_s,\Phi_\Lambda \right)\in \left(V_s \times H^{1/2}(\partial\Omega_N)\right) \times W_s\times W_\Lambda$ such that:
	
	\begin{subnumcases}
		\mathcal{A}(\mathbf{D}_s,\mathbf{v}) +\mathcal{B}(\mathbf{v},\left(\Phi_s,\xi)\right)  = -<\bar{\Phi}_s,\mathbf{v}\cdot\mathbf{n}>_{\partial\Omega_D}, & $ \forall \mathbf{v}\in V_s,  $
		\label{eq:3d-1d:weak:1}\\
		\mathcal{B}(\mathbf{v},\left(\phi,\lambda)\right)  + c_{\Lambda s}(\Phi_\Lambda,\hat{\phi}) - c_{ss}(\hat{\Phi}_s, \hat{\phi}) = <G, \hat{\phi}>_\Lambda - <\nu,\phi>_{\partial\Omega_N}, & $ \forall \phi\in W_s, \forall \lambda\in H^{1/2}(\partial\Omega_N), $
		\label{eq:3d-1d:weak:2} \\
		\mathcal{A}_\Lambda (\Phi_\Lambda, \psi) + c_{\Lambda \Lambda}(\Phi_\Lambda,\psi) -c_{\Lambda s}(\hat{\Phi}_s, \psi) = <F,\psi>_\Lambda, & $ \forall \psi\in W_\Lambda^0, $
		\label{eq:3d-1d:weak:3}
	\end{subnumcases} 
	
	\noindent where we have defined the following operators:
	
	\begin{equation*}
		\begin{alignedat}{5}
			&\mathcal{A}\ :& \ V_s\times V_s &\longrightarrow \mathbb{R}, \qquad& \mathcal{A}(\mathbf{u},\mathbf{v}) &= \epsilon_s^{-1}\int_\Omega \mathbf{u}\cdot\mathbf{v}; \\
			&\mathcal{B}\ :& \ V_s\times \left(W_s\times H^{1/2}(\partial\Omega_N)\right) &\longrightarrow \mathbb{R}, \qquad& \mathcal{B}\left(\mathbf{u},(\phi,\lambda)\right) &= - \int_\Omega \phi \nabla\cdot\mathbf{u} + \int_{\partial\Omega_N} \lambda \mathbf{v}\cdot\mathbf{n}; \\
			&c_{\Lambda\Lambda}\ :&\ W_\Lambda\times W_\Lambda &\longrightarrow \mathbb{R}, \qquad& c_{\Lambda\Lambda}(\psi_1,\psi_2) &= 4\pi\epsilon_g\int_\Lambda \psi_1\psi_2; \\
			&c_{\Lambda s}\ :&\ W_\Lambda\times W_s &\longrightarrow \mathbb{R}, \qquad& c_{\Lambda,s}(\psi,\phi) &= 4\pi\epsilon_g\int_\Lambda \psi\hat{\phi}; \\
			&c_{ss}\ :&\ W_s\times W_s &\longrightarrow \mathbb{R}, \qquad& c_{ss}(\phi_1,\phi_2) &= 4\pi\epsilon_g\int_\Lambda \hat{\phi_1}\hat{\phi_2}; \\
			&\mathcal{A}_\Lambda \ :&\ W_\Lambda\times W_\Lambda &\longrightarrow \mathbb{R}, \qquad& \mathcal{A}_\Lambda(\phi,\psi) &= \pi R^2 \int_\Lambda\epsilon_g\dfrac{d\psi}{ds}\dfrac{d\phi}{ds};\\
			&G\ :& \ \Lambda &\longrightarrow \mathbb{R}, \qquad & G &= \dfrac{g}{2\epsilon_g}R; \\
			&F\ :& \ \Lambda & \longrightarrow \mathbb{R}, \qquad & F &= \pi R^2 \dfrac{q}{4\epsilon_0}.
		\end{alignedat}
	\end{equation*}
	
	\noindent We assume that $g\in L^2(\Lambda)$, $q\in L^2(\Lambda)$, $\bar{\Phi}_s\in H^{-1/2}(\partial\Omega_D)$, $\nu\in H^{-1/2}(\partial\Omega_N)$, $\epsilon_g\in L^\infty(\Lambda)$, $\epsilon_s\in L^\infty(\Omega)$ and $\epsilon_s\geq\epsilon_g\geq 1$.\\
	If we sum equation~\eqref{eq:3d-1d:weak:3} and equation~\eqref{eq:3d-1d:weak:1}, we obtain the following equivalent problem:\\
	
	Find $\left( \left(\mathbf{D}_s, \Phi_\Lambda\right), \left(\Phi_s,\xi\right) \right)\in \left(\left(V_s \times W_\Lambda\right) \times \left(W_s\times H^{1/2}(\partial\Omega_N)\right)\right)$ such that:
	
	\begin{equation}
		\begin{cases}
            \begin{split}
    			a\left(\left(\mathbf{D}_s,\Phi_\Lambda\right),\left(\mathbf{v},\psi\right)\right) + b_1\left(\left(\mathbf{v},\psi\right),\left(\Phi_s,\xi\right)\right) = -<\bar{\Phi}_s,\mathbf{v}\cdot\mathbf{n}>_{\partial\Omega_D} + <F,\psi>_\Lambda , \\ \forall \left(\mathbf{v},\psi\right)\in V_s\times W_\Lambda, 
            \end{split}
            \\
            \begin{split}
                b_2\left(\left(\mathbf{D_s},\Phi_\Lambda\right),\left(\phi,\lambda\right)\right) - c_{ss}\left( \Phi_s,\phi \right)= <G, \hat{\phi}>_\Lambda - <\nu,\phi>_{\partial\Omega_N}, 
                \\ \forall (\phi,\lambda)\in W_s\times H^{1/2}(\partial\Omega_N).
            \end{split}
		\end{cases}
		\label{eq:3d-1d:weak:eq}
	\end{equation} 
	
	\noindent where we define a norm over the product spaces $V_s\times W_\Lambda$ and $W_s\times H^{1/2}(\partial\Omega_N)$ respectively as:
	
	\begin{equation*}
        \begin{aligned}
            \|\left( \mathbf{v}, \psi \right)\|_{V_s\times W_\Lambda} &= \sqrt{\|\mathbf{v} \|_{V_2}^2 + \|\psi\|_{W_\Lambda}^2},\\
		      \|\left(\phi,\xi\right)\|_{W_s\times H^{1/2}(\partial\Omega_N)} &= \sqrt{\|\phi\|_{W_s}^2 + \|\xi\|_{H^{1/2}(\partial\Omega_N)}^2}
        \end{aligned}
	\end{equation*}
	
	\noindent and the operators $a\ :\ \left( V_s\times W_\Lambda \right)\times\left( V_s\times W_\Lambda \right) \to \mathbb{R}$, $b_1 \ :\ \left( V_s\times W_\Lambda\right) \times \left(W_s\times H^{1/2}(\partial\Omega_N)\right) \to \mathbb{R}$ and $b_2 \ :\ \left( V_s\times W_\Lambda\right) \times \left(W_s\times H^{1/2}(\partial\Omega_N)\right) \to \mathbb{R}$ respectively as follows:
	
	\begin{equation*}
		\begin{aligned}
		    a\left( \left( \mathbf{v}_1,\psi_1 \right), \left( \mathbf{v}_2,\psi_2 \right) \right) &= \mathcal{A}_s(\mathbf{v}_1,\mathbf{v}_2) + \mathcal{A}_\Lambda(\psi_1,\psi_2) + c_{\Lambda\Lambda}(\psi_1,\psi_2), \\
            b_1\left( \left(\mathbf{v},\psi\right), \left( \phi,\xi \right) \right) & = \mathcal{B}(\mathbf{v},(\phi,\xi)) - c_{\Lambda s}(\psi,\phi), \\
            b_2\left( \left(\mathbf{v},\psi\right), \left( \phi,\xi \right) \right) & = \mathcal{B}(\mathbf{v},(\phi,\xi)) + c_{\Lambda s}(\psi,\phi).
		\end{aligned}
	\end{equation*}
	
	\noindent We can prove that this saddle point problem admits a unique solution, relying on the results by \textcite{nicolaides1982existence}, \textcite{bernardi1988generalized}, \textcite{brezzi2012mixed}.
	
	In order to show this result, we need to prove that the integral mean of a function in $L^2(\Omega)$ belongs to $L^2(\Lambda)$, and we do it in a similar way as \textcite{cerroni2019mathematical}.
	
	\begin{lemma}
		If $\phi\in L^2(\Omega)$, then $\hat{\phi}\in L^2(\Lambda)$ and $\exists k>0$ such that the following inequality holds:
		\begin{equation*}
			\|\hat{\phi}\|^2_{L^2(\Lambda)}\leq k\|\phi\|^2_{L^2(\Omega)}.
		\end{equation*}
		\label{lemma:phi_hat_L2}
	\end{lemma}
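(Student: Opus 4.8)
\noindent The plan is to read $\hat{\phi}$ as the cross-sectional (disk) mean
\[
  \hat{\phi}(s) = \frac{1}{|\mathcal{D}(s)|}\int_{\mathcal{D}(s)} \phi, \qquad |\mathcal{D}(s)| = \pi R^2,
\]
rather than as the boundary mean of~\eqref{eq:phi_mean}: for a mere $L^2(\Omega)$ function the trace on the one-dimensional circle $\partial\mathcal{D}(s)$ is not defined, whereas the disk average is, and the two coincide in the thin-tube limit used throughout. By Fubini's theorem the map $s\mapsto \int_{\mathcal{D}(s)}\phi$ is measurable and finite for almost every $s$, so $\hat{\phi}$ is well defined a.e.\ on $\Lambda$; it then remains only to control its $L^2$ norm.

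\noindent First I would estimate $\hat{\phi}(s)$ pointwise by Cauchy--Schwarz (equivalently, Jensen's inequality for the probability measure $\mathrm{d}A/|\mathcal{D}(s)|$),
\[
  |\hat{\phi}(s)|^2 \le \frac{1}{|\mathcal{D}(s)|^2}\,|\mathcal{D}(s)|\int_{\mathcal{D}(s)}|\phi|^2 = \frac{1}{\pi R^2}\int_{\mathcal{D}(s)}|\phi|^2,
\]
and then integrate in $s$ along $\Lambda$ and apply Fubini:
\[
  \|\hat{\phi}\|_{L^2(\Lambda)}^2 = \int_\Lambda |\hat{\phi}(s)|^2\,\mathrm{d}s \le \frac{1}{\pi R^2}\int_\Lambda\!\int_{\mathcal{D}(s)}|\phi|^2\,\mathrm{d}A\,\mathrm{d}s.
\]
In this way the claim reduces to bounding the iterated integral on the right by $\|\phi\|_{L^2(\Omega)}^2$, up to a constant.

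\noindent The remaining step is a change of variables identifying that iterated integral with the volume integral of $|\phi|^2$ over the tube $\Omega_g\subset\Omega$. If $\Lambda$ is straight, cylindrical coordinates $(s,r,\theta)$ give $\mathrm{d}V = \mathrm{d}A\,\mathrm{d}s$ exactly, so $\int_\Lambda\int_{\mathcal{D}(s)}|\phi|^2 = \int_{\Omega_g}|\phi|^2 \le \|\phi\|_{L^2(\Omega)}^2$ and the lemma holds with $k = 1/(\pi R^2)$. For a curved centerline I would instead use the Fermi (tubular-neighborhood) coordinates around $\Lambda$, for which $\mathrm{d}V = J\,\mathrm{d}A\,\mathrm{d}s$ with $J = 1-\kappa r\cos\theta$; provided $R$ is small relative to the reciprocal of the maximal curvature of $\Lambda$ — which is exactly the thin-tube regime of Assumption~\ref{assumption:thin} — this map is a diffeomorphism onto $\Omega_g$ and $0<c_1\le J\le c_2<\infty$ with $c_1,c_2$ depending only on the curvature of $\Lambda$. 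Then $\int_\Lambda\int_{\mathcal{D}(s)}|\phi|^2\,\mathrm{d}A\,\mathrm{d}s \le c_1^{-1}\int_{\Omega_g}|\phi|^2 \le c_1^{-1}\|\phi\|_{L^2(\Omega)}^2$, giving the estimate with $k = (c_1\pi R^2)^{-1}$. I expect the control of this Jacobian — namely verifying that the tube is embedded and that the flat disk area element is comparable to the induced volume element — to be the only genuinely delicate point; everything else is Cauchy--Schwarz and Fubini.
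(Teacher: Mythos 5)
Your argument is essentially the paper's own proof: the authors also read $\hat{\phi}$ as the disk average $\frac{1}{|\mathcal{D}(s)|}\int_{\mathcal{D}(s)}\phi$ (not the boundary mean of~\eqref{eq:phi_mean}), apply Jensen/Cauchy--Schwarz on each section, use Fubini to identify the iterated integral with $\int_{\Omega_g}\phi^2\le\|\phi\|^2_{L^2(\Omega)}$, and conclude for the straight cylinder with $|\mathcal{D}(s)|=\pi R^2$. Your constant $k=1/(\pi R^2)$ is in fact the sharp one that Jensen gives (the paper states $k=1/(\pi R^2)^2$, which is valid only because $\pi R^2<1$), and your remark on the Fermi-coordinate Jacobian for a curved centerline is a refinement the paper does not address.
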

	
	\begin{proof}
		By definition of the $L^2$-space, $\hat{\phi}\in L^2(\Lambda)\iff\int_\Lambda \hat{\phi}^2 ds < \infty$. Then, we consider
		\begin{equation}
			\int_\Lambda \hat{\phi}^2 ds = \int_\Lambda \left(\frac{1}{|\mathcal{D}(s)|}\int_{\mathcal{D}(s)} \phi \right)^2.
			\label{eq:proof:phi_hat}
		\end{equation}
		\noindent By Jensen's inequality,
		\begin{equation*}
			\left(\frac{1}{|\mathcal{D}(s)|}\int_{\mathcal{D}(s)} \phi \right)^2 \leq \frac{1}{|\mathcal{D}(s)|^2}\int_{\mathcal{D}(s)} \phi^2,
		\end{equation*}
		\noindent the right-hand side of equation~\eqref{eq:proof:phi_hat} can be rewritten as follows:
		\begin{equation*}
			\int_\Lambda\left(\frac{1}{|\mathcal{D}(s)|}\int_{\mathcal{D}(s)} \phi \right)^2 \leq \int_\Lambda\frac{1}{|\mathcal{D}(s)|^2}\int_{\mathcal{D}(s)} \phi^2.
		\end{equation*}
		\noindent Observe that the double integral over $\Lambda\times\mathcal{D}(s)$ is equivalent to the integral of $\phi$ over $\Omega_g$ and, as we assume $|\mathcal{D}(s)|=\pi R^2,\ \forall s\in\Lambda$, then, we have
		\begin{equation*}
			\int_\Lambda \hat{\phi}^2 ds \leq \frac{1}{|\mathcal{D}(s)|^2}\int_{\Omega_g} \phi^2 = \frac{1}{(\pi R^2)^2} \int_{\Omega_g} \phi^2 \leq \frac{1}{(\pi R^2)^2} \int_{\Omega} \phi^2 = \frac{1} {(\pi R^2)^2}\|\phi\|^2_{L^2(\Omega)} < \infty,
		\end{equation*}
		\noindent since $\phi^2\geq 0$ and $\Omega_g\subset\Omega$.
		
		We have proved that $\hat{\phi}\in L^2(\Lambda)$ and $\|\hat{\phi}\|^2_{L^2(\Lambda)}\leq k\|\phi\|^2_{L^2(\Omega)}$, with $k=\dfrac{1}{(\pi R^2)^2}$.
	\end{proof}
	
	\noindent We also need to prove some results on the operators involved in the problem that will allow us to show the well-posedness of~\eqref{eq:3d-1d:1}-\eqref{eq:3d-1d:7}.
	
	\begin{lemma}
		$G$ and $F$ belong to $L^2(\Lambda)$ and the right-hand sides of the equations ~\eqref{eq:3d-1d:weak:eq} are continuous.
	\end{lemma}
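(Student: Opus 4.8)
The plan is to split the statement into two independent parts and prove each by elementary estimation: first that $F,G\in L^2(\Lambda)$, and then that each linear functional standing on the right-hand side of \eqref{eq:3d-1d:weak:eq} is bounded with respect to the norm of the corresponding product test space. For the first part, both $F=\pi R^2 q/(4\epsilon_0)$ and $G=gR/(2\epsilon_g)$ are products of an $L^2(\Lambda)$ datum ($q$ and $g$, respectively) with a bounded multiplier. For $F$ the multiplier $\pi R^2/(4\epsilon_0)$ is a positive constant, so $\|F\|_{L^2(\Lambda)}=\frac{\pi R^2}{4\epsilon_0}\|q\|_{L^2(\Lambda)}<\infty$. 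For $G$ I would use $\epsilon_g\in L^\infty(\Lambda)$ together with the standing assumption $\epsilon_g\geq 1$, which gives the pointwise bound $|G|\leq \frac{R}{2}|g|$ and hence $\|G\|_{L^2(\Lambda)}\leq \frac{R}{2}\|g\|_{L^2(\Lambda)}<\infty$. This disposes of $F,G\in L^2(\Lambda)$ directly.

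For continuity I would estimate each right-hand side term by term, applying Cauchy--Schwarz in every pairing and then the definitions of the product norms. The two line-integral terms are the most structural: $|\langle F,\psi\rangle_\Lambda|\leq \|F\|_{L^2(\Lambda)}\|\psi\|_{L^2(\Lambda)}\leq \|F\|_{L^2(\Lambda)}\|\psi\|_{W_\Lambda}$, where I use the embedding $H^1(\Lambda)\hookrightarrow L^2(\Lambda)$, and $|\langle G,\hat\phi\rangle_\Lambda|\leq \|G\|_{L^2(\Lambda)}\|\hat\phi\|_{L^2(\Lambda)}\leq \sqrt{k}\,\|G\|_{L^2(\Lambda)}\|\phi\|_{L^2(\Omega)}$, where the last step is exactly Lemma~\ref{lemma:phi_hat_L2}; since $\|\phi\|_{L^2(\Omega)}=\|\phi\|_{W_s}$ this is controlled by the $W_s\times H^{1/2}(\partial\Omega_N)$-norm. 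The two boundary terms are then handled by the regularity built into the spaces: the Neumann pairing is bounded by the $H^{-1/2}$--$H^{1/2}$ duality, $|\langle\nu,\cdot\rangle_{\partial\Omega_N}|\leq \|\nu\|_{H^{-1/2}(\partial\Omega_N)}\,\|\cdot\|_{H^{1/2}(\partial\Omega_N)}$, while the Dirichlet pairing is controlled by the $L^2(\partial\Omega)$ bound on the normal trace encoded in the $V_s$-norm, $|\langle\bar\Phi_s,\mathbf{v}\cdot\mathbf{n}\rangle_{\partial\Omega_D}|\leq \|\bar\Phi_s\|\,\|\mathbf{v}\cdot\mathbf{n}\|_{L^2(\partial\Omega_D)}\leq \|\bar\Phi_s\|\,\|\mathbf{v}\|_{V_s}$. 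Summing these estimates and using $\|\psi\|_{W_\Lambda}\leq \|(\mathbf{v},\psi)\|_{V_s\times W_\Lambda}$ (and the analogous bound for the second functional) produces explicit boundedness constants.

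The routine algebra aside, the only genuinely delicate step is the two boundary pairings, and in particular the matching of the regularity of the boundary data to the trace regularity that is actually available. This is precisely the reason $V_s$ was defined with the extra requirement $\mathbf{v}\cdot\mathbf{n}\in L^2(\partial\Omega)$ rather than with the generic $H(\mathrm{div};\Omega)$-trace in $H^{-1/2}(\partial\Omega)$: it is this additional regularity, recorded in the $V_s$-norm, that makes the Dirichlet functional bounded, and for the pairing to close one should read $\bar\Phi_s$ as belonging to the dual of the $L^2(\partial\Omega_D)$ trace. The other nontrivial ingredient is Lemma~\ref{lemma:phi_hat_L2}, which is what permits the one-dimensional coupling datum $G$ to be tested against $\hat\phi$ while still being controlled by the three-dimensional norm $\|\phi\|_{W_s}=\|\phi\|_{L^2(\Omega)}$; without that estimate the $\langle G,\hat\phi\rangle_\Lambda$ term could not be bounded within the chosen spaces.
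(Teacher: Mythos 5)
Your proposal is correct and follows essentially the same route as the paper's proof: term-by-term Cauchy--Schwarz estimates, Lemma~\ref{lemma:phi_hat_L2} to control $\|\hat\phi\|_{L^2(\Lambda)}$ by $\|\phi\|_{L^2(\Omega)}$, the $H^{-1/2}$--$H^{1/2}$ duality for the Neumann pairing, and the extra $L^2(\partial\Omega)$ trace regularity built into $V_s$ for the Dirichlet pairing. Your remark that the Dirichlet pairing requires reading $\bar\Phi_s$ in $L^2(\partial\Omega_D)$ rather than merely $H^{-1/2}(\partial\Omega_D)$ is a fair observation about the paper's stated data regularity, but it does not change the argument.
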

	\begin{proof}
		A straightforward consequence of the assumption $g,q\in L^2(\Lambda)$ is that also $G$ and $F$ belong to $L^2(\Lambda)$.\\
        For the continuity, we start by applying the triangular inequality and Cauchy-Schwarz inequality:
		\begin{multline*}
				|<G,\hat{\phi}>_\Lambda - <\nu,\phi>_{\partial\Omega_N}|
                \leq |<G,\hat{\phi}>_\Lambda| + |<\nu,\phi>_{\partial\Omega_N}|
				\leq \\
				\leq \|G\|_{L^2(\Lambda)}\|\hat{\phi}\|_{L^2(\Lambda)} + \|\nu\|_{H^{-1/2}(\partial\Omega_N)} \|\phi\|_{H^{1/2}(\partial\Omega_N)}, \qquad
                \forall \phi\in W_s.			
		\end{multline*}
		\noindent By Lemma~\ref{lemma:phi_hat_L2},
		\begin{multline*}
				|<G,\hat{\phi}>_\Lambda+<\nu,\phi>_{\partial\Omega_N}| 
                \leq \max\left\{\dfrac{\|G\|_{L^2(\Lambda)}}{\pi R^2}, \|\nu\|_{H^{-1/2}(\partial\Omega_N)}\right\} \|\phi\|_{H^1(\Omega)} \leq \\
				\leq \max\left\{ \dfrac{\|G\|_{L^2(\Lambda)}}{\pi R^2}, 	\|\nu\|_{H^{-1/2}(\partial\Omega_N)}\right\}\|\left(\phi,\psi\right)\|_{W_s\times W_\Lambda}, \qquad \forall (\phi,\xi)\in W_s\times H^{1/2}(\partial\Omega_N).
		\end{multline*}

        \noindent Finally, by triangular inequality and Cauchy-Schwarz inequality,

        \begin{multline*}
            \left| -<\bar{\Phi}_s,\mathbf{v}\cdot\mathbf{n}>_{\partial\Omega_D} + <F,\psi>_\Lambda \right| \leq
            \|\bar{\Phi}_s\|_{L^2(\partial\Omega_D)}\|\mathbf{v}\cdot\mathbf{n}\|_{L^2(\partial\Omega_D)} + \|F\|_{L^2(\Lambda)}\|\psi\|_{L^2(\Lambda)} \leq\\
            \leq \|\bar{\Phi}_s\|_{L^2(\partial\Omega_D)}\|\mathbf{v}\|_{V_s} + \|F\|_{L^2(\Lambda)}\|\psi\|_{L^2(\Lambda)} \leq 
            \max\left\{ \|\bar{\Phi}_s\|_{L^2(\partial\Omega_D)},\|F\|_{L^2(\Lambda)} \right\} \|\mathbf{v},\psi\|_{V_s\times W_\Lambda}, 
            \\ \forall \left(\mathbf{v},\psi\right)\in V_s\times W_\Lambda.
        \end{multline*}
	\end{proof}
	
	\begin{lemma}
		The bilinear form $a$ is positive definite, i.e.
        \begin{equation*}
            a\left(\left(\mathbf{u}_1,\psi_1\right),\left(\mathbf{u}_2,\psi_2\right)\right)> 0,\ \forall\left(\mathbf{u}_1,\psi_1\right),\ \left(\mathbf{u}_2,\psi_2\right)\in V_s,
        \end{equation*}
        continuous and coercive on the kernel of $b_1$ and of $b_2$,
        \begin{equation}
            \ker(b_1) = \ker(b_2) = \ker(b)=\{(\mathbf{u},\psi)\in V_s\times W_s\ : \ \nabla\cdot \mathbf{u} = 0,\ \mathbf{u}\cdot\mathbf{n} = 0\ \text{on\ }\partial\Omega_N\}.
            \label{eq:kerB}
        \end{equation}
	\end{lemma}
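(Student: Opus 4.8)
The plan is to check the three asserted properties of $a$ one at a time, exploiting that $a$ splits as $\mathcal{A}(\mathbf{u}_1,\mathbf{u}_2)+\mathcal{A}_\Lambda(\psi_1,\psi_2)+c_{\Lambda\Lambda}(\psi_1,\psi_2)$ into a $V_s$-form acting only on the vector unknown and two $W_\Lambda$-forms acting only on the scalar line unknown, so that the estimates decouple. Throughout I would use the standing hypotheses $\epsilon_s,\epsilon_g\in L^\infty$ and $\epsilon_s\geq\epsilon_g\geq 1$, which give $1/\|\epsilon_s\|_{L^\infty(\Omega)}\leq\epsilon_s^{-1}\leq 1$ and $1\leq\epsilon_g\leq\|\epsilon_g\|_{L^\infty(\Lambda)}$.

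For positive-definiteness (strict positivity on the diagonal) I would evaluate
\[
a\big((\mathbf{u},\psi),(\mathbf{u},\psi)\big) = \int_\Omega \epsilon_s^{-1}\,|\mathbf{u}|^2 + \pi R^2 \int_\Lambda \epsilon_g\left(\frac{d\psi}{ds}\right)^{2} + 4\pi \int_\Lambda \epsilon_g\,\psi^2 ,
\]
a sum of three non-negative terms because every coefficient is strictly positive; it vanishes only when $\mathbf{u}=0$ in $L^2(\Omega)$ and $\psi=0$ in $H^1(\Lambda)$. Continuity follows termwise from Cauchy--Schwarz, $|\mathcal{A}(\mathbf{u}_1,\mathbf{u}_2)|\leq\|\mathbf{u}_1\|_{L^2(\Omega)}\|\mathbf{u}_2\|_{L^2(\Omega)}$, $|\mathcal{A}_\Lambda(\psi_1,\psi_2)|\leq\pi R^2\|\epsilon_g\|_{L^\infty(\Lambda)}\|\psi_1'\|_{L^2(\Lambda)}\|\psi_2'\|_{L^2(\Lambda)}$ and $|c_{\Lambda\Lambda}(\psi_1,\psi_2)|\leq 4\pi\|\epsilon_g\|_{L^\infty(\Lambda)}\|\psi_1\|_{L^2(\Lambda)}\|\psi_2\|_{L^2(\Lambda)}$; bounding each factor by the product norm $\|(\cdot,\cdot)\|_{V_s\times W_\Lambda}$ gives a continuity constant depending only on $\|\epsilon_g\|_{L^\infty(\Lambda)}$ and on $R$.

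The crux is coercivity on $\ker(b)$, and the single place where the kernel constraint is indispensable is that $\mathcal{A}$ controls only $\|\mathbf{u}\|_{L^2(\Omega)}$ and not the divergence or normal-trace parts of $\|\mathbf{u}\|_{V_s}$; consequently $a$ is not coercive on all of $V_s\times W_\Lambda$. On $\ker(b)$, however, the conditions $\nabla\cdot\mathbf{u}=0$ and $\mathbf{u}\cdot\mathbf{n}=0$ on $\partial\Omega_N$ annihilate the first and third terms of $\|\mathbf{u}\|_{V_s}^2=\|\nabla\cdot\mathbf{u}\|_{L^2(\Omega)}^2+\|\mathbf{u}\|_{L^2(\Omega)}^2+\|\mathbf{u}\cdot\mathbf{n}\|_{L^2(\partial\Omega)}^2$, so $\|\mathbf{u}\|_{V_s}^2=\|\mathbf{u}\|_{L^2(\Omega)}^2$ and the first term of $a$ dominates it with constant $1/\|\epsilon_s\|_{L^\infty(\Omega)}$. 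For the line unknown no constraint is needed: using $\epsilon_g\geq 1$, $\mathcal{A}_\Lambda(\psi,\psi)+c_{\Lambda\Lambda}(\psi,\psi)\geq \pi R^2\|\psi'\|_{L^2(\Lambda)}^2+4\pi\|\psi\|_{L^2(\Lambda)}^2\geq \min\{\pi R^2,4\pi\}\|\psi\|_{W_\Lambda}^2$ controls the full $H^1(\Lambda)$ norm. Adding the two estimates yields $a((\mathbf{u},\psi),(\mathbf{u},\psi))\geq\alpha\|(\mathbf{u},\psi)\|_{V_s\times W_\Lambda}^2$ on $\ker(b)$ with $\alpha=\min\{1/\|\epsilon_s\|_{L^\infty(\Omega)},\ \pi R^2,\ 4\pi\}$.

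I expect the real difficulty to lie less in these estimates than in justifying the kernel identity $\ker(b_1)=\ker(b_2)=\ker(b)$ stated in the lemma. Writing $b_i((\mathbf{v},\psi),(\phi,\lambda))=0$ for all test pairs and varying $\lambda$ recovers $\mathbf{v}\cdot\mathbf{n}=0$ on $\partial\Omega_N$, whereas varying $\phi$ couples $\nabla\cdot\mathbf{v}$ to the line term $c_{\Lambda s}$ through the section-mean operator, whose continuity $L^2(\Omega)\to L^2(\Lambda)$ is precisely Lemma~\ref{lemma:phi_hat_L2}. Reconciling the two opposite signs of $c_{\Lambda s}$ in $b_1$ and $b_2$ with the divergence-free characterization is the step I would treat most carefully; once the kernel is pinned down the coercivity argument above applies unchanged, and $\alpha$ is the constant to carry into the subsequent inf--sup/Brezzi verification of well-posedness.
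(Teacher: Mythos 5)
Your proof is correct and follows essentially the same route as the paper's: termwise Cauchy--Schwarz for continuity, non-negativity of each diagonal term for positive definiteness, and the observation that the kernel constraints $\nabla\cdot\mathbf{u}=0$ and $\mathbf{u}\cdot\mathbf{n}=0$ collapse $\|\mathbf{u}\|_{V_s}$ to $\|\mathbf{u}\|_{L^2(\Omega)}$ so that the $\epsilon_s^{-1}$ term alone gives coercivity, with $\epsilon_g\geq 1$ handling the line unknown. The kernel identity you rightly flag as the delicate point is likewise left unargued in the paper, which simply asserts the characterization of $\ker(b_1)=\ker(b_2)$ in the lemma statement.
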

	
	\begin{proof}
		For the continuity, by triangular and Cauchy-Schwarz inequality,
		\begin{equation*}
            \begin{split}
    			\left|a\left( \left(\mathbf{u}_1,\psi_1\right),\left(\mathbf{u}_2,\psi_2\right) \right)\right| 
                &= \left| \epsilon_s^{-1} \int_\Omega \mathbf{u}_1\cdot\mathbf{u}_2 + \int_\Lambda \pi R^2 \epsilon_g \dfrac{d\psi_1}{ds}\dfrac{d\psi_2}{ds} + 4\pi\epsilon_g\psi_1\psi_2 \right| \leq \\
                &\begin{split}
                    \leq (\min_{\Omega}(\epsilon_s))^{-1} \|\mathbf{u}_1\|_{L^2(\Omega)}\|\mathbf{u}_2\|_{L^2(\Omega)} &+ \pi R^2 \max_{\Lambda}(\epsilon_g)|\psi_1|_{H^1(\Omega)} |\psi_2|_{H^1(\Omega)} +\\
                    &+ 4\pi \max_\Lambda(\epsilon_g)\|\psi_1\|_{L^2(\Lambda)} \|\psi_2\|_{L^2(\Lambda)}
                \end{split}
            \end{split}
		\end{equation*}

        \noindent Moreover, exploiting the definitions of the norms in $V_s$ and $W_\Lambda$ and on their product space:

        \begin{equation*}
            \begin{split}
                \left|a\left( \left(\mathbf{u}_1,\psi_1\right),\left(\mathbf{u}_2,\psi_2\right) \right)\right| 
                &\leq K_1\left( \|\mathbf{u}_1\|_{L^2(\Omega)} \|\mathbf{u}_2\|_{L^2(\Omega)} + \|\psi_1\|_{H^1(\Lambda)}\|\psi_2\|_{H^1(\Lambda)} \right) \leq\\
                &\leq K_1 \|\left(\mathbf{u}_1,\psi_1\right)\|_{V_s\times W_\Lambda} \|\left(\mathbf{u}_2,\psi_2\right)\|_{V_s\times W_\Lambda}
                , \qquad \forall \left(\mathbf{u},\psi\right)\in V_s\times W_\Lambda,
            \end{split}
        \end{equation*}
        \noindent with $K_1>0$. 
		
		\noindent Moreover, by definition of the norms in $V_s$ and $W_\Lambda$ and on their product space and recalling that $\mathbf{u}\in\ker(b)$ implies $\nabla\cdot\mathbf{u}=0$, $\mathbf{u}\cdot\mathbf{n} = 0$, and thus $\|\mathbf{u}\|_{V_s} = \|\mathbf{u}\|_{L^2(\Omega)}$, $\forall \mathbf{u}\in\ker(b)$, we conclude that $a$ is coercive on $\ker(b)$:
  
		\begin{equation}
            \begin{split}
    			a\left(\left(\mathbf{u},\psi\right),\left(\mathbf{u},\psi\right)\right) &\geq \|\epsilon_s\|_{L^\infty(\Omega)}^{-1}\int_\Omega |\mathbf{u}|^2 + \int_\Lambda \pi R^2 \min_{\Lambda}(\epsilon_g) \left(\dfrac{d\psi}{ds}\right)^2 + \int_\Lambda 4\pi\min_{\Lambda}(\epsilon_g) \psi^2 \geq\\
                &\geq\|\epsilon_s\|_{L^\infty(\Omega)}^{-1}\|\mathbf{u}\|_{L^2(\Omega)}^2 + \pi R^2 \|\nabla\psi\|_{H^1(\Lambda)}^2 + 4 \pi \|\psi\|_{L^2(\Lambda)}^2 \geq\\
                &\geq K_2 \left(\|\left( \mathbf{u},\psi \right)\|_{V_s\times W_\Lambda}\right),
                \qquad \forall \left(\mathbf{u},\psi\right)\in \ker(b),
            \end{split}
            \label{eq:lemma:a:coercivity}
		\end{equation}	
        
        \noindent with $K_2>0$.\\
        Finally, $a$ is also positive definite, as a consequence of equation~\eqref{eq:lemma:a:coercivity}:
        \begin{equation*}
            \begin{split}
                a\left(\left(\mathbf{u},\psi\right),\left(\mathbf{u},\psi\right)\right) 
                \geq\|\epsilon_s\|_{L^\infty(\Omega)}^{-1}\|\mathbf{u}\|_{L^2(\Omega)}^2 + \pi R^2\min_\Lambda(\epsilon_g) \|\nabla\psi\|_{H^1(\Lambda)}^2 + 4 \pi \min_\Lambda(\epsilon_g) \|\psi\|_{L^2(\Lambda)}^2 \geq 0,& \\
                \forall \left(\mathbf{u},\psi\right)\in V_s\times W_\Lambda.&
            \end{split}
        \end{equation*}
        
	\end{proof}

    \begin{lemma}
        The operators $b_1$ and $b_2$ are continuous.
    \end{lemma}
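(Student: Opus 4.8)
The plan is to establish the continuity of $b_1$ and $b_2$ simultaneously. Since the two forms differ only in the sign of the coupling term $c_{\Lambda s}(\psi,\phi)$, it suffices to bound separately the two building blocks $\mathcal{B}(\mathbf{v},(\phi,\xi))$ and $c_{\Lambda s}(\psi,\phi)$ by products of the relevant norms and then combine them via the triangle inequality; the resulting constant will serve for both $b_1$ and $b_2$. The target estimate is $|b_i((\mathbf{v},\psi),(\phi,\xi))| \leq C\,\|(\mathbf{v},\psi)\|_{V_s\times W_\Lambda}\,\|(\phi,\xi)\|_{W_s\times H^{1/2}(\partial\Omega_N)}$.

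First I would bound the form $\mathcal{B}$. Writing $\mathcal{B}(\mathbf{v},(\phi,\xi)) = -\int_\Omega \phi\,\nabla\cdot\mathbf{v} + \int_{\partial\Omega_N}\xi\,\mathbf{v}\cdot\mathbf{n}$, I apply Cauchy--Schwarz to each integral. The volume term is controlled by $\|\phi\|_{L^2(\Omega)}\|\nabla\cdot\mathbf{v}\|_{L^2(\Omega)} \leq \|\phi\|_{W_s}\|\mathbf{v}\|_{V_s}$, since each factor is dominated by the corresponding full norm. For the boundary term the crucial point is that the definition of $V_s$ builds in the extra regularity $\mathbf{v}\cdot\mathbf{n}\in L^2(\partial\Omega)$, with $\|\mathbf{v}\cdot\mathbf{n}\|_{L^2(\partial\Omega)}$ appearing in $\|\mathbf{v}\|_{V_s}$; hence $\|\mathbf{v}\cdot\mathbf{n}\|_{L^2(\partial\Omega_N)}\leq\|\mathbf{v}\|_{V_s}$. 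Pairing this with the continuous embedding $H^{1/2}(\partial\Omega_N)\hookrightarrow L^2(\partial\Omega_N)$, so that $\|\xi\|_{L^2(\partial\Omega_N)}\leq C\|\xi\|_{H^{1/2}(\partial\Omega_N)}$, yields a bound of the form $C\,\|\mathbf{v}\|_{V_s}\,\|\xi\|_{H^{1/2}(\partial\Omega_N)}$.

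Next I would handle the coupling term $c_{\Lambda s}(\psi,\phi) = 4\pi\epsilon_g\int_\Lambda\psi\hat{\phi}$. Cauchy--Schwarz on $\Lambda$ gives $|c_{\Lambda s}(\psi,\phi)| \leq 4\pi\|\epsilon_g\|_{L^\infty(\Lambda)}\,\|\psi\|_{L^2(\Lambda)}\,\|\hat{\phi}\|_{L^2(\Lambda)}$, and the decisive step is to invoke Lemma~\ref{lemma:phi_hat_L2} to replace $\|\hat{\phi}\|_{L^2(\Lambda)}$ by $\sqrt{k}\,\|\phi\|_{L^2(\Omega)}\leq\sqrt{k}\,\|\phi\|_{W_s}$; together with $\|\psi\|_{L^2(\Lambda)}\leq\|\psi\|_{W_\Lambda}$ this closes the estimate for this term.

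Finally I would assemble the pieces: each individual factor ($\|\mathbf{v}\|_{V_s}$, $\|\psi\|_{W_\Lambda}$, $\|\phi\|_{W_s}$, $\|\xi\|_{H^{1/2}(\partial\Omega_N)}$) is dominated by the corresponding product-space norm, so summing the contributions and taking the largest constant produces a single continuity constant valid for both $b_1$ and $b_2$. I expect the only genuinely nonroutine ingredients to be the trace control of $\mathbf{v}\cdot\mathbf{n}$---which is precisely why the additional regularity was encoded directly into the definition of $V_s$---and the passage from $\hat{\phi}$ back to $\phi$, which is exactly the content of Lemma~\ref{lemma:phi_hat_L2}; the remaining manipulations are standard applications of Cauchy--Schwarz and the triangle inequality.
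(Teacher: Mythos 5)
Your proposal is correct and follows essentially the same route as the paper: triangle inequality plus Cauchy--Schwarz on each of the three terms, the built-in $L^2(\partial\Omega)$ regularity of $\mathbf{v}\cdot\mathbf{n}$ in $V_s$ for the boundary term, and Lemma~\ref{lemma:phi_hat_L2} to pass from $\|\hat{\phi}\|_{L^2(\Lambda)}$ back to $\|\phi\|_{L^2(\Omega)}$. You are in fact slightly more explicit than the paper in justifying the pairing of $\xi$ with $\mathbf{v}\cdot\mathbf{n}$ via the embedding $H^{1/2}(\partial\Omega_N)\hookrightarrow L^2(\partial\Omega_N)$ and in retaining the $4\pi\|\epsilon_g\|_{L^\infty(\Lambda)}$ factor, but these are refinements of the same argument rather than a different one.
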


    \begin{proof}
        By triangular and Cauchy-Schwarz inequality,
        \begin{equation}
            \begin{split}
                \left| b_1\left(\left(\mathbf{u},\psi\right),\left(\phi,\xi\right)\right) \right| & =
                \left| -\int_\Omega \phi\nabla\cdot \mathbf{u} + \int_{\partial\Omega_N}\xi\mathbf{u}\cdot\mathbf{n} - \int_\Lambda \psi\hat{\phi} \right| \leq\\
                &\leq \|\phi\|_{L^2(\Omega)} \|\nabla\cdot\mathbf{u}\|_{L^2(\Omega)} + \|\xi\|_{H^{1/2}(\partial\Omega_N)}\|\mathbf{u}\cdot\mathbf{n}\|_{L^2(\Omega)} + \|\psi\|_{L^2(\Omega)}\|\hat{\phi}\|_{L^2(\lambda)}.
            \end{split}
            \label{eq:lemma:inf-sup}
        \end{equation}

        \noindent and the same holds for $b_2$:
        \begin{equation}
            \begin{split}
                \left| b_2\left(\left(\mathbf{u},\psi\right),\left(\phi,\xi\right)\right) \right| & =
                \left| -\int_\Omega \phi\nabla\cdot \mathbf{u} + \int_{\partial\Omega_N}\xi\mathbf{u}\cdot\mathbf{n} + \int_\Lambda \psi\hat{\phi} \right| \leq\\
                &\leq \|\phi\|_{L^2(\Omega)} \|\nabla\cdot\mathbf{u}\|_{L^2(\Omega)} + \|\xi\|_{H^{1/2}(\partial\Omega_N)}\|\mathbf{u}\cdot\mathbf{n}\|_{L^2(\Omega)} + \|\psi\|_{L^2(\Omega)}\|\hat{\phi}\|_{L^2(\lambda)}.
            \end{split}
            \label{eq:lemma:inf-sup:b2}
        \end{equation}
        
        \noindent By exploiting the definition of the norms in $V_s$, $W_s$ and $W_\Lambda$ and Lemma~\ref{lemma:phi_hat_L2}, we can show that $b$ is continuous:
        \begin{equation*}
            \begin{split}
                \left| b_i\left(\left(\mathbf{u},\psi\right),\left(\phi,\xi\right)\right) \right| & \leq 
                \|\phi\|_{W_s} \|\mathbf{u}\|_{V_s} + \|\xi\|_{H^{1/2}(\partial\Omega_N)}\|\mathbf{u}\cdot\mathbf{n}\|_{V_s} +\|\psi\|_{W_\Lambda}\dfrac{\|\phi\|_{L^2(\Omega)}}{\pi R^2} \leq \\
                & \leq K_3 \|\left(\mathbf{u},\psi\right)\|_{V_s\times W_\Lambda}  \|\left(\phi,\xi\right)\|_{W_s\times H^{1/2}(\partial\Omega_N)},\quad i=1,2,
            \end{split}
        \end{equation*}
        \noindent with $K_3>0$.
    \end{proof}
	
	\begin{lemma}
		The bilinear forms $b_i,\ i=1,2,$ satisfy the $\inf-\sup$ condition:
        \begin{multline}
            \exists K_i>0 \text{\ such \ that}\ 
            \sup_
            {\scriptsize
                \begin{split}
                \left(\mathbf{v},\psi\right)&\in V_s\times W_\Lambda \\
                \mathbf{v} &\neq \mathbf{0} ,\ 
                \psi \neq 0
            \end{split}}
            \dfrac{b_i\left(\left(\mathbf{v},\psi\right),(\phi,\lambda)\right)}{\|\left(\mathbf{v},\psi\right)\|_{V_s\times W_\Lambda}}
            \geq K_i \|(\phi,\lambda)\|_{W_s\times H^{1/2}(\partial\Omega_N)}, \\ \forall (\phi,\lambda)\in W_s\times H^{1/2}(\partial\Omega_N), \ i=1,2.
            \label{eq:lemma:b:inf-sup}
        \end{multline}
	\end{lemma}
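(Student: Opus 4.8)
The plan is to reduce the claimed inf--sup estimate for both $b_1$ and $b_2$ to the classical inf--sup property of the pure mixed form $\mathcal{B}$, and then to establish the latter by explicitly constructing a maximizing field $\mathbf{v}$ obtained from an auxiliary elliptic problem driven by the data $(\phi,\lambda)$. First I would decouple the $\Lambda$-component: since $b_i\left(\left(\mathbf{v},\psi\right),\left(\phi,\lambda\right)\right) = \mathcal{B}(\mathbf{v},(\phi,\lambda)) \mp c_{\Lambda s}(\psi,\phi)$ and the target norm $\|(\phi,\lambda)\|_{W_s\times H^{1/2}(\partial\Omega_N)}$ does not involve $\psi$, I would test with pairs $(\mathbf{v},\psi)$ in which $\psi=\delta\psi_0$ for a fixed $\psi_0\neq 0$ and $\delta\to 0^+$. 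Then $c_{\Lambda s}(\psi,\phi)=4\pi\epsilon_g\int_\Lambda\psi\hat{\phi}\to 0$ and $\|(\mathbf{v},\psi)\|_{V_s\times W_\Lambda}\to\|\mathbf{v}\|_{V_s}$, so the supremum on the left-hand side of~\eqref{eq:lemma:b:inf-sup} is bounded below by $\sup_{\mathbf{v}\neq\mathbf{0}}\mathcal{B}(\mathbf{v},(\phi,\lambda))/\|\mathbf{v}\|_{V_s}$, uniformly for both signs $i=1,2$. Hence it suffices to prove the inf--sup condition for $\mathcal{B}$ alone.

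Next I would construct the field realizing the supremum. Given $(\phi,\lambda)\in W_s\times H^{1/2}(\partial\Omega_N)$, I would introduce the auxiliary problem of finding $w\in H^1(\Omega)$ with $-\Delta w=\phi$ in $\Omega$, $w=0$ on $\partial\Omega_D$ and $\nabla w\cdot\mathbf{n}=\lambda$ on $\partial\Omega_N$, and set $\mathbf{v}=\nabla w$. By construction $-\nabla\cdot\mathbf{v}=\phi$ and $\mathbf{v}\cdot\mathbf{n}=\lambda$ on $\partial\Omega_N$, so that, recalling $\mathcal{B}(\mathbf{v},(\phi,\lambda))=-\int_\Omega\phi\,\nabla\cdot\mathbf{v}+\int_{\partial\Omega_N}\lambda\,\mathbf{v}\cdot\mathbf{n}$, one obtains $\mathcal{B}(\mathbf{v},(\phi,\lambda))=\|\phi\|_{L^2(\Omega)}^2+\|\lambda\|_{L^2(\partial\Omega_N)}^2$. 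The solvability of the auxiliary problem, together with the definition of the $V_s$-norm (which adds the boundary contribution $\|\mathbf{v}\cdot\mathbf{n}\|_{L^2(\partial\Omega)}^2$ to the $H(\text{div})$-norm), then yields a stability bound $\|\mathbf{v}\|_{V_s}\leq C\left(\|\phi\|_{W_s}+\|\lambda\|_{H^{1/2}(\partial\Omega_N)}\right)$. Dividing and combining the quadratic numerator with the linear denominator, I would recover $\mathcal{B}(\mathbf{v},(\phi,\lambda))/\|\mathbf{v}\|_{V_s}\geq K_i\,\|(\phi,\lambda)\|_{W_s\times H^{1/2}(\partial\Omega_N)}$ with $K_i>0$, which is the asserted estimate.

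The hard part will be matching the norms. The natural construction above controls $\mathcal{B}(\mathbf{v},(\phi,\lambda))$ only by $\|\phi\|_{L^2(\Omega)}$ and $\|\lambda\|_{L^2(\partial\Omega_N)}$, whereas~\eqref{eq:lemma:b:inf-sup} demands the stronger $\|\phi\|_{H^1(\Omega)}$ and $\|\lambda\|_{H^{1/2}(\partial\Omega_N)}$; closing this gap is where the hypotheses genuinely enter. To do so I would invoke elliptic regularity on $\Omega$ so that $\mathbf{v}=\nabla w$ indeed belongs to $V_s$ with normal trace in $L^2(\partial\Omega)$ (thereby exploiting the extra regularity deliberately built into $V_s$), and I would use the a posteriori identity $\nabla\Phi_s=-\epsilon_s^{-1}\mathbf{D}_s\in L^2(\Omega)$ noted in Section~\ref{section:reduction_s} to tie the $H^1$-seminorm of the scalar variable back to the displacement field controlled by $a$. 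The delicate points are therefore the well-posedness of the mixed auxiliary problem with prescribed normal flux $\lambda$ on $\partial\Omega_N$ and the sharpness of the constant in the bound for $\|\mathbf{v}\|_{V_s}$; once these are secured, the conclusion follows for both $i=1,2$.
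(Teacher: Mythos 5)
Your reduction of the coupled inf--sup to the one for $\mathcal{B}$ alone---testing with $(\mathbf{v},\delta\psi_0)$ and letting $\delta\to 0^+$ so that the coupling term $c_{\Lambda s}(\psi,\phi)$ and the $\psi$-contribution to the norm vanish---is legitimate, and in fact simpler than what the paper does: the paper keeps a nonzero $\tilde{\psi}_i=\mp w$ built from a one-dimensional auxiliary problem $-w''=\hat{\phi}$, with the sign chosen per $i$ precisely so that the coupling term contributes nonnegatively to the numerator, and it first splits the product-norm condition into two separate inf--sup inequalities (one controlling $\|\phi\|_{W_s}$, one controlling $\|\lambda\|_{H^{1/2}(\partial\Omega_N)}$) via the characterization it cites from Gatica, constructing a separate auxiliary field for each.

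The genuine gap is in your second step, exactly where you write that ``the hard part will be matching the norms'': the tools you propose do not close it. With $\mathbf{v}=\nabla w$, $-\nabla\cdot\mathbf{v}=\phi$ and $\mathbf{v}\cdot\mathbf{n}=\lambda$, your numerator is $\|\phi\|_{L^2(\Omega)}^2+\|\lambda\|_{L^2(\partial\Omega_N)}^2$, while the denominator satisfies $\|\mathbf{v}\|_{V_s}\geq\|\mathbf{v}\cdot\mathbf{n}\|_{L^2(\partial\Omega)}\geq\|\lambda\|_{L^2(\partial\Omega_N)}$; to reach the asserted bound you would need $\|\lambda\|_{L^2(\partial\Omega_N)}\geq c\,\|\lambda\|_{H^{1/2}(\partial\Omega_N)}$, which is false, since $H^{1/2}$ is strictly stronger than $L^2$ on the boundary and an oscillatory $\lambda$ makes the ratio of the two norms arbitrarily small. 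Neither elliptic regularity nor the identity $\nabla\Phi_s=-\epsilon_s^{-1}\mathbf{D}_s$ (which concerns the solution, not an arbitrary test pair $(\phi,\lambda)$) repairs this. The missing idea is a duality choice: for the multiplier part one must not take $\mathbf{v}\cdot\mathbf{n}=\lambda$ but rather $\mathbf{v}\cdot\mathbf{n}=\nu$ with $\nu\in H^{-1/2}(\partial\Omega_N)$ a Riesz representative of $\lambda$, so that $\int_{\partial\Omega_N}\lambda\,\mathbf{v}\cdot\mathbf{n}=\langle\lambda,\nu\rangle_{\partial\Omega_N}=\|\lambda\|_{H^{1/2}(\partial\Omega_N)}^2$ while the harmonic lifting gives $\|\mathbf{v}\|_{V_s}\leq C\|\nu\|_{H^{-1/2}(\partial\Omega_N)}=C\|\lambda\|_{H^{1/2}(\partial\Omega_N)}$; this is how the paper obtains the $H^{1/2}$ norm on the right-hand side. (A smaller caveat: even for the $\phi$ part your construction only yields $\|\phi\|_{L^2(\Omega)}$ on the right, which suffices only if $W_s$ carries the $L^2$ norm---as the paper's own computation implicitly assumes, despite its stated definition $\|\phi\|_{W_s}=\|\phi\|_{H^1(\Omega)}$.)
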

	
	\begin{proof}
        As in~\cite{babuvska2003mixed}, we first prove that $\exists C_1>0$ such that
        
        \begin{equation}
            \sup_
            {\scriptsize
                \begin{split}
                \left(\mathbf{v},\psi\right)&\in V_s\times W_\Lambda \\
                \mathbf{v} &\neq \mathbf{0},\
                \psi \neq 0
            \end{split}}
            \dfrac{b_i\left(\left(\mathbf{v},\psi\right),(\phi,\lambda)\right)}{\|\left(\mathbf{v},\psi\right)\|_{V_s\times W_\Lambda}}
            \geq C_1 \|\phi\|_{W_s}, \quad \forall \phi,\in W_s
            \label{eq:lemma:b:inf-sup:phi}
        \end{equation}

        \noindent and then that $\exists C_2 >0 $ such that

        \begin{equation}
            \sup_
            {\scriptsize
                \begin{split}
                \left(\mathbf{v},\psi\right)&\in V_s\times W_\Lambda \\
                \mathbf{v} &\neq \mathbf{0}, \
                \psi \neq 0
            \end{split}}
            \dfrac{b_i\left(\left(\mathbf{v},\psi\right),(\phi,\lambda)\right)}{\|\left(\mathbf{v},\psi\right)\|_{V_s\times W_\Lambda}}
            \geq K \|\mu\|_{H^{1/2}(\partial\Omega_N}, \quad \forall \mu \in H^{1/2}(\partial\Omega_N).
            \label{eq:lemma:b:inf-sup:mu}
        \end{equation}

        \noindent Finally, according to Theorem 3.1 of~\cite{gatica2008characterizing},~\eqref{eq:lemma:b:inf-sup:phi} and~\eqref{eq:lemma:b:inf-sup:mu} are necessary and sufficient conditions for~\eqref{eq:lemma:b:inf-sup}.
        
		  \noindent Let $(\phi,\lambda)\in W_s\times H^{1/2}(\partial\Omega_N)$ and $z\in H^1(\Omega)$ be the solution to
        \begin{equation*}
            \begin{dcases}
                -\Delta z = \phi, &\text{in\ } \Omega \\
                z = 0, &\text{on\ } \partial\Omega_D, \\
                \nabla z\cdot \mathbf{n} = 0, &\text{on\ } \partial \Omega_N,\ i=1,2.
            \end{dcases}
        \end{equation*}
        \noindent Fixed $\mathbf{\tilde{v}}\in V_s$ such that $\mathbf{\tilde{v}} = \nabla z$, then $-\nabla\cdot \mathbf{\tilde{v}} = \phi$, $\mathbf{\tilde{v}}\cdot\mathbf{n}=0$ on $\partial\Omega_N$.\\
        Moreover, $\|\mathbf{\tilde{v}}\|_{L^2(\Omega)} \leq \|z \|_{L^2(\Omega)} \leq C \|\phi \|_{L^2(\Omega)}$ for some $C>0$, by continuity of the solution to the Laplace problem. \\
        Let $w\in H^1(\Lambda)$ be the solution to

        \begin{equation}
            \begin{dcases}
                -\dfrac{d^2 w}{ds^2} &= \hat{\phi}, \quad \text{on\ }\Lambda, \\
                w(0) &= 0, \\
                \dfrac{dw}{ds}(S) & = 0.
            \end{dcases}
            \label{eq:lemma:b:inf-sup:phi:1}
        \end{equation}
        \noindent Fixed $\tilde{\psi}_i\in W_s,\ i=1,2,$ such that $\tilde{\psi}_2=w$ and $\tilde{\psi}_1=-w$, the continuous dependence of the solution to problem~\eqref{eq:lemma:b:inf-sup:phi:1} implies $ \|\tilde{\psi}_i\|_{H^1({\Lambda})} \leq C_i \|\hat{\phi}\|_{L^2(\Lambda)}$, for some $C_i>0$, $ i=1,2$. As a consequence, by Lemma~\ref{lemma:phi_hat_L2}, $\|\psi_1\|_{H^1(\Lambda)} = \|\psi_1\|_{W_\Lambda} \leq \dfrac{\|\phi\|_{L^2(\Omega)}}{\pi R^2}, \ i=1,2$. \\
        Then,

        \begin{equation}
            \begin{split}
                \sup_
                {\scriptsize
                    \begin{split}
                    \left(\mathbf{v},\psi\right)&\in V_s\times W_\Lambda \\
                    \mathbf{v} &\neq \mathbf{0} \\
                    \psi &\neq 0
                \end{split}}
                &\dfrac{b_i\left(\left(\mathbf{v},\psi\right),(\phi,\lambda)\right)}{\|(\mathbf{v},\psi)\|_{V_s\times W_\Lambda}}
                =
                \dfrac{b_i\left(\left(\mathbf{\tilde{v}},\tilde{\psi}_i\right),(\phi,\lambda)\right)}{\|\left(\mathbf{\tilde{v}},\tilde{\psi}_i\right)\|_{V_s\times W_\Lambda}} =\\
                &= \dfrac{1}{\left(\|\mathbf{\tilde{v}}\|_{V_s}^2 + \|\tilde{\psi}_i\|_{W_\Lambda}^2\right)^{1/2}}\left( -\int_\Omega \phi\nabla\cdot \mathbf{\tilde{v}} + \int_{\partial\Omega_N} \lambda\mathbf{\tilde{v}}\cdot\mathbf{n}  -\int_\Lambda\tilde{\psi}_i\dfrac{d^2\tilde{\psi}_i}{ds^2}\right)\geq\\
                &\geq \dfrac{1}{C\|\phi\|_{W_s}}\left( -\int_\Omega \phi^2  -\int_\Lambda\tilde{\psi}_i\dfrac{d^2\tilde{\psi}_i}{ds^2} \right), \quad \forall (\phi,\lambda)\in W_s\times H^{1/2}(\partial\Omega_N), \ i=1,2.
            \end{split}
            \label{eq:lemma:b:inf-sup:2}
        \end{equation}

        \noindent We can integrate the last term by parts and apply boundary conditions of problem~\eqref{eq:lemma:b:inf-sup:phi:1}:

        \begin{equation*}
            -\int_\Lambda\tilde{\psi}\dfrac{d^2\tilde{\psi}}{ds^2} = \int_\Lambda\left(\dfrac{d\tilde{\psi}}{ds}\right)^2 - \left(\tilde{\psi}(S)\dfrac{d\tilde{\psi}}{ds}(S) - \tilde{\psi}(0)\dfrac{d\tilde{\psi}}{ds}(0)\right) = \left\|\dfrac{d\tilde{\psi}}{ds}\right\|_{L^2(\Lambda)}^2.
        \end{equation*}

        \noindent If we substitute the equation above in equation~\eqref{eq:lemma:b:inf-sup:2}, we obtain~\eqref{eq:lemma:b:inf-sup:phi}:

        \begin{multline*}
            \sup_
                {\scriptsize
                    \begin{split}
                    \left(\mathbf{v},\psi\right)&\in V_s\times W_\Lambda \\
                    \mathbf{v} &\neq \mathbf{0} \\
                    \psi &\neq 0
                \end{split}}
                \dfrac{b_i\left(\left(\mathbf{v},\psi\right),(\phi,\lambda)\right)}{\|(\mathbf{v},\psi)\|_{V_s\times W_\Lambda}}
                \geq 
                \dfrac{1}{C\|\phi\|_{W_s}}\left(\|\phi\|_{W_s}^2 + \left\|\dfrac{d\tilde{\psi}_i}{ds}\right\|_{L^2(\Lambda)}^2\right)
                \geq C_1 \|\phi\|_{W_s}, \\ \forall (\phi,\lambda)\in W_s\times H^{1/2}(\partial\Omega_N), \ i=1,2.
        \end{multline*}

        \noindent Let now $\nu\in H^{-1/2}(\partial\Omega_N)$ and $t\in H^1(\Omega)$ be the solution to
        \begin{equation*}
            \begin{dcases}
                -\Delta t = 0, &\text{in\ } \Omega \\
                t = 0, &\text{on\ } \partial\Omega_D, \\
                \nabla t\cdot \mathbf{n} = \nu, &\text{on\ } \partial \Omega_N.
            \end{dcases}
        \end{equation*}
        \noindent Fixed $\mathbf{\hat{v}}\in V_s$ such that $\mathbf{\hat{v}} = -\nabla t$, then $\nabla\cdot \mathbf{\hat{v}} = 0$, $-\mathbf{\hat{v}}\cdot\mathbf{n}=\nu$ on $\partial\Omega_N$.\\
        Moreover, $\|\mathbf{\hat{v}}\|_{L^2(\Omega)} \leq \|t \|_{H^1(\Omega)} \leq C_2 \|\nu \|_{H^{-1/2}(\partial\Omega_N)}$ for some $C>0$, by continuity of the solution to the Poisson problem, and consequently $\|\mathbf{\hat{v}}\|_{V_s}^2 = \|\mathbf{\hat{v}}\|_{L^2(\Omega)}^2 + \|\nu\|_{H^{-1/2}(\partial\Omega_N}^2 \leq \left(1+C^2\right) \|\nu\|_{H^{-1/2}(\partial\Omega_N}^2$. \\
        Then, we can repeat the same steps as before and apply the Poincaré inequality to $\|\psi\|_{H^1(\Omega)}$ and Riesz representation theorem to $<\lambda,\nu>_{\partial\Omega_N}$:

        \begin{equation}
            \begin{split}
                \sup_
                {\scriptsize
                    \begin{split}
                    \left(\mathbf{v},\psi\right)&\in V_s\times W_\Lambda \\
                    \mathbf{v} &\neq \mathbf{0} \\
                    \psi &\neq 0
                \end{split}}
                \dfrac{b_i\left(\left(\mathbf{v},\psi\right),(\phi,\lambda)\right)}{\|(\mathbf{v},\psi)\|_{V_s\times W_\Lambda}}
                =
                \dfrac{b_i\left(\left(\mathbf{\tilde{v}},\tilde{\psi}_i\right),(\phi,\lambda)\right)}{\|\left(\mathbf{\tilde{v}},\tilde{\psi}_i\right)\|_{V_s\times W_\Lambda}} =
                \dfrac{ < \lambda,\nu >_{\partial\Omega_N} + \left\|\dfrac{d\tilde{\psi}_i}{ds}\right\|_{L^2(\Lambda)}^2 }{\left(C \|\nu\|_{H^{-1/2}(\partial\Omega_N)} ^2 + \|\tilde{\psi}_i\|_{W_\Lambda}^2\right)^{1/2}} \geq\\
                \geq \dfrac{ 1 }{\left(C \|\lambda\|_{H^{1/2}(\partial\Omega_N)} ^2+ \left(1+C_p^2\right)\left\|\dfrac{d\tilde{\psi}_i}{ds}\right\|_{L^2(\lambda)}^2\right)^{1/2}} \left( \|\lambda\|_{H^{1/2}(\partial\Omega_N)} ^2 + \left\|\dfrac{d\tilde{\psi}_i}{ds}\right\|_{L^2(\Lambda)}^2 \right)
                \geq C_2 \|\lambda\|_{H^{1/2}(\partial\Omega_N)}, \\ \forall (\phi,\lambda)\in W_s\times H^{1/2}(\partial\Omega_N), \ i=1,2.
            \end{split}
            \label{eq:lemma:b:inf-sup:2}
        \end{equation}
        
        \noindent This proves~\eqref{eq:lemma:b:inf-sup:mu}, which, together with~\eqref{eq:lemma:b:inf-sup:phi}, implies~\eqref{eq:lemma:b:inf-sup}.
	\end{proof}
	
	\begin{lemma}
		The operator $c_{ss}$ is positive semi-definite and symmetric.
	\end{lemma}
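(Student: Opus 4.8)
The plan is to read off both properties directly from the definition $c_{ss}(\phi_1,\phi_2)=4\pi\epsilon_g\int_\Lambda\hat{\phi_1}\hat{\phi_2}$, since $c_{ss}$ is nothing more than a weighted $L^2(\Lambda)$ pairing of the section-averages $\hat{\phi_1},\hat{\phi_2}$.

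First I would check that $c_{ss}$ is well-defined and finite on $W_s\times W_s = L^2(\Omega)\times L^2(\Omega)$. By Lemma~\ref{lemma:phi_hat_L2} the averaging map $\phi\mapsto\hat\phi$ sends $L^2(\Omega)$ into $L^2(\Lambda)$, so $\hat{\phi_1}\hat{\phi_2}\in L^1(\Lambda)$ by the Cauchy--Schwarz inequality; combined with $\epsilon_g\in L^\infty(\Lambda)$, this guarantees that the integral converges. Symmetry is then immediate, because the integrand $\epsilon_g(s)\,\hat{\phi_1}(s)\hat{\phi_2}(s)$ is a product of real-valued functions and therefore invariant under the exchange of $\phi_1$ and $\phi_2$, so that $c_{ss}(\phi_1,\phi_2)=c_{ss}(\phi_2,\phi_1)$.

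For positive semi-definiteness I would set $\phi_1=\phi_2=\phi$ to obtain $c_{ss}(\phi,\phi)=4\pi\epsilon_g\int_\Lambda\hat{\phi}^2$. The integrand $\hat{\phi}^2$ is pointwise nonnegative, and the standing assumption $\epsilon_s\geq\epsilon_g\geq 1$ makes the weight strictly positive, so the whole expression is $\geq 0$. I would also stress that only semi-definiteness, and not definiteness, can hold: any $\phi$ whose restriction to almost every section $\mathcal{D}(s)$ has vanishing mean satisfies $\hat\phi\equiv 0$, hence $c_{ss}(\phi,\phi)=0$ even though $\phi\neq 0$ in $L^2(\Omega)$, so the form has a large kernel.

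There is essentially no analytical obstacle in this statement: the only point requiring genuine care is the appeal to Lemma~\ref{lemma:phi_hat_L2}, needed to interpret $\hat\phi$ as an honest element of $L^2(\Lambda)$ before integrating, together with the observation that $\epsilon_g\geq 1$ fixes the sign of the weight. Everything else follows from the commutativity and nonnegativity of products of real numbers.
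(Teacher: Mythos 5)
Your proposal is correct and follows essentially the same route as the paper: symmetry is read off from the product structure of the integrand, and positive semi-definiteness from $c_{ss}(\phi,\phi)=4\pi\epsilon_g\int_\Lambda\hat{\phi}^2\geq 0$. The extra points you add (finiteness of the pairing via Lemma~\ref{lemma:phi_hat_L2} and the observation that the form has a nontrivial kernel, so only semi-definiteness holds) are sound and, if anything, slightly more careful than the paper's one-line argument.
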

	
	\begin{proof}
        Symmetry is straightforward, since it is defined as the integral of a product.
        
		Consider now $\phi\in W_s$:
		\begin{equation*}
            \begin{split}
                c_{ss}\left(\phi,\phi\right) &= 
			    4\pi\epsilon_g\int_\Lambda \hat{\phi}^2 = 4\pi\epsilon_g \|\phi\|_{W_s}^2
				\geq 0,\quad \forall (\phi,\psi)\in W_\Lambda\times W_\Lambda.
			\end{split}
		\end{equation*}

        \noindent Thus, $c_{ss}$ is positive semi-definite.
        
	\end{proof}
	
	\begin{theorem}
		There exists a unique solution $\left( \left(\mathbf{D}_s, \Phi_\Lambda\right), \left(\Phi_s,\xi\right) \right)\in \left(V_s \times W_\Lambda\right) \times W_s\times H^{1/2}(\partial\Omega_N)$ to problem~\eqref{eq:3d-1d:weak:eq}.
	\end{theorem}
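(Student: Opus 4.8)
The plan is to recognize that the coupled weak problem~\eqref{eq:3d-1d:weak:eq} has the abstract structure of a generalized (perturbed) saddle-point problem, and then to invoke the abstract well-posedness theory of \textcite{nicolaides1982existence}, \textcite{bernardi1988generalized} and \textcite{brezzi2012mixed}. Writing $U=(\mathbf{D}_s,\Phi_\Lambda)\in V_s\times W_\Lambda$ and $P=(\Phi_s,\xi)\in W_s\times H^{1/2}(\partial\Omega_N)$, the system takes the block form $a(U,V)+b_1(V,P)=\langle\mathcal{F},V\rangle$ and $b_2(U,Q)-c_{ss}(P,Q)=\langle\mathcal{G},Q\rangle$, i.e.\ an operator of the type $\left(\begin{smallmatrix} A & B_1' \\ B_2 & -C\end{smallmatrix}\right)$. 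Since this differs from the classical Brezzi setting in two respects --- the two off-diagonal forms $b_1$ and $b_2$ are distinct, and the $(2,2)$ block carries the penalty term $-c_{ss}$ --- I would frame the argument within the generalized inf-sup framework that accommodates both features rather than within the symmetric theory.

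The bulk of the work is already done by the preceding lemmas, so the proof reduces to checking that their conclusions are precisely the hypotheses of the abstract theorem and then assembling them. Concretely, I would cite: continuity of the right-hand sides (so $\mathcal{F}$ and $\mathcal{G}$ are bounded functionals); continuity and positive-definiteness of $a$, together with its coercivity on the kernel established in~\eqref{eq:lemma:a:coercivity}; continuity of $b_1,b_2$ and the inf-sup bounds~\eqref{eq:lemma:b:inf-sup}; and the symmetry and positive semi-definiteness of $c_{ss}$. A point I would stress is the identity $\ker(b_1)=\ker(b_2)=\ker(b)$ from~\eqref{eq:kerB}: because both off-diagonal forms annihilate the \emph{same} subspace, the coercivity of $a$ on that common kernel is exactly what is needed to control $U$ once $P$ is eliminated, and the matching inf-sup conditions for $b_1$ and $b_2$ then recover $P$.

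The main obstacle is the simultaneous presence of the asymmetry $b_1\neq b_2$ and the semidefinite penalty $-c_{ss}$, which is why the classical Babuška--Brezzi statement cannot be quoted verbatim. I would handle this by observing that the penalty appears with the stabilizing sign: testing the second equation against $Q=P$ and the first against $V=U$ and combining them, the $-c_{ss}(P,P)\le 0$ contribution reinforces the coercivity of $a$ on $\ker(b)$ rather than destroying it, so no compatibility between the two inf-sup constants beyond the shared kernel is required. The asymmetry is then absorbed by the generalized inf-sup theory: the two conditions~\eqref{eq:lemma:b:inf-sup} for $b_1$ and $b_2$, combined with the coercivity of $a$ on $\ker(b)$ and the semidefinite sign of $c_{ss}$, supply bijectivity of the global operator. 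The quoted results of Nicolaides, Bernardi et al.\ and Brezzi then yield existence and uniqueness of $(U,P)$, and hence of $\left((\mathbf{D}_s,\Phi_\Lambda),(\Phi_s,\xi)\right)$.
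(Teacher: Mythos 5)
Your proposal matches the paper's proof: the paper likewise just collects the hypotheses furnished by the preceding lemmas (continuity of the right-hand sides, continuity and positive semi-definiteness of $a$ with coercivity on $\ker(b)$, continuity and inf-sup stability of $b_1$ and $b_2$, symmetry and positive semi-definiteness of $c_{ss}$) and invokes the abstract generalized saddle-point theory of Brezzi, Nicolaides and Bernardi et al. Your additional remarks on the shared kernel and the stabilizing sign of $-c_{ss}$ are consistent elaborations of the same argument rather than a different route.
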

	
	\begin{proof}
		Since the right-hand side of the equation is continuous in $L^2$, $a$, $b_1$, $b_2$ and $c_{ss}$ are continuous, $a$ is coercive on $\ker{b}$ and positive semi-definite, $b_1$ and $b_2$ satisfy the $\inf-\sup$ condition~\eqref{eq:lemma:b:inf-sup} and $c$ is positive semi-definite and symmetric, the problem admits a unique solution (see \cite{brezzi1974existence}).
	\end{proof}
	
	\section{3D-1D problem on bifurcations}
	\label{section:bifurcations}
	
	In the previous sections we have derived the one-dimensional formulation of the electrostatic problem on a cylindrical gas domain, geometrically reduced to a straight line. In the case of electrical treeing, the defect has a much more complicated structure, possibly forked in some points. We can assume that each branch is very thin, as in Assumption~\ref{assumption:thin}, and extend Assumptions~\ref{assumption:constant_q},~\ref{assumption:fluctuations} and~\ref{assumption:null_flux} to each of them. As before, each branch is thus representable as a straight cylinder and reducible to a segment. The whole domain can thus be approximated as a one-dimensional graph where the problem on each segment is described by the system of equations~\eqref{eq:P2:primal:1d}. In this case we need to impose additional conditions at the graph nodes, accounting for 3D volumes at the intersection on branches and deriving reduced conditions.\\
	Let us start by integrating the governing equation~\eqref{eq:P2:primal:1} of the 3D problem in the gas, in primal form, over these subdomains, that we will denote by $\mathcal{J}$, and apply the divergence theorem:
	
	\begin{equation*}
		\int_\mathcal{J} \left(-\nabla\cdot\left(\epsilon_g \nabla\Phi_g\right)\right) = -\int_{\partial\mathcal{J}} \epsilon_g\nabla\Phi_g\cdot\mathbf{n}.
	\end{equation*}
	
	\begin{figure}
		\centering
		\resizebox{.35\textwidth}{!}
		{\begin{tikzpicture}
				\node[yshift=-2.65cm] {$\mathcal{J}$};
				\node (a) [cylinder,
				yshift=-4.25cm, 
				xshift=-2.5cm, 
				rotate=-150, 
				draw, 
				minimum height=5.6cm,
				minimum width=2cm, 
				line width = 1] {$\Omega_2$};
				\node (a) [cylinder,
				yshift=-4.25cm, 
				xshift=2.5cm, 
				rotate=-30,
				draw, 
				minimum height=5.6cm, 
				minimum width=2cm, 
				line width = 1] {$\Omega_3$};
				\node (a) [cylinder,
                fill = white,
				draw, 
				shape border rotate = 90,
				yshift = .1cm,
				minimum height=5.6cm,
				minimum width=2cm, 
				line width = 1] {$\Omega_1$};
				
				\draw [dashed] (0,2.5) -- (0,-2.1);
				\draw [->] (0,2.85) -- (0,1.3);
				\node [xshift=-.2cm, yshift=1.6cm] {$s$};
				\draw [->] (0,-2.15) -- (0,-1.5);
				\node [xshift=-.3cm, yshift=-1.9cm] {$\mathbf{n}_1$};
				
				\begin{scope}[transform canvas={xshift=-2.65cm,yshift=-4.25cm,rotate=-60}]
					\draw [dashed] (0,2.45) -- (0,-2.5);
					\draw [->] (0,2.45) -- (0,1.3);
					\node [xshift=-.2cm, yshift=1.6cm] {$s$};
					\node [xshift=.4cm, yshift=1.6cm] {$\mathbf{n}_2$};
				\end{scope}
				
				\begin{scope}[transform canvas={xshift=2.65cm,yshift=-4.25cm,rotate=60}]
					\draw [dashed] (0,2.45) -- (0,-2.5);
					\draw [->] (0,2.45) -- (0,1.3);
					\node [xshift=-.2cm, yshift=1.6cm] {$s$};
					\node [xshift=.3cm, yshift=1.6cm] {$\mathbf{n}_3$};
				\end{scope}
		\end{tikzpicture}}
		\caption{Junction among three cylinders $\Omega_1$, $\Omega_2$ and $\Omega_3$. The boundary of the junction volume $\mathcal{J}$ is the union of the bases of the three cylinders and the lateral surface of the 3D domain.}
		\label{figure:intersection}
	\end{figure}
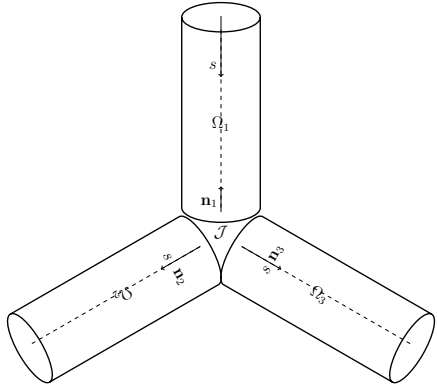
	
	\noindent The boundary of the junction $\mathcal{J}$ is given by the union of the bases of three cylinders $\Omega_i, \ i=1,2,3,$ representing the branches connected by it, and a small portion of lateral surface that we denote by $ \mathcal{S} $: $\partial\mathcal{J} = \left(\mathcal{J}\cap \Omega_1\right) \cup \left(\mathcal{J}\cap \Omega_2\right) \cup \left(\mathcal{J}\cap \Omega_3\right) \cup \mathcal{S}$, as in Figure~\ref{figure:intersection}. We call $\mathbf{n}_i$ the unit vector with direction parallel to the axis of each cylinder $\Omega_i$, pointing outwards from $\mathcal{J}$. If $\mathbf{n}_i$ has the same direction as $\mathbf{s}$ along the centerline $\Lambda_i$ of $\Omega_i$, we say that $\Lambda_i$ becomes an \textit{incoming edge} for the graph, with respect to the considered intersection node, otherwise it is an \textit{outgoing edge}.  In the reduction to 1D, the volume $\mathcal{J}$ is collapsed into a point connected to more than two edges, called \textit{bifurcation node}. For each bifurcation node $v_j$, $j=1,\dots,\mathcal{N}_\text{bif}$, where $\mathcal{N}_\text{bif}$ is the number of bifurcations, we denote by $\mathcal{E}^+_j$ and $\mathcal{E}^-_j$ the sets of its incoming and outgoing edges, respectively. In the example represented by Figure~\ref{figure:intersection}, the vectors $\mathbf{n}_i$, $i=1,2,3$, are orthogonal to the bases of cylinders forming the surface of $\mathcal{J}$, with the same direction and $\mathbf{s}$ in $\Omega_2$ and $\Omega_3$ and opposite direction to $\mathbf{s}$ in $\Omega_1$. We obtain:
	
	\begin{equation*}
		-\int_{\mathcal{J}\cap \Omega_i} \epsilon_g \nabla\Phi_g\cdot \mathbf{n} = \int_{\mathcal{J}\cap \Omega_i} \epsilon_g \nabla\Phi_g\cdot \mathbf{n}_i =
		\begin{dcases}
			\int_{\mathcal{J}\cap \Omega_i} \epsilon_g \dfrac{\partial\Phi_g}{\partial s}, & i=1,\\
			-\int_{\mathcal{J}\cap \Omega_i} \epsilon_g \dfrac{\partial \Phi_g}{\partial s}, & i=2,3.
		\end{dcases}
	\end{equation*}
	
	\noindent If we substitute the potential $\Phi_g$ with the splitting introduced in equation~\eqref{eq:splitting_phi_g}, we obtain:
	
	\begin{equation*}
		\int_{\mathcal{J}\cap \Omega_i} \epsilon_g \dfrac{\partial \Phi_g}{\partial s} = \int_{\mathcal{J}\cap \Omega_i} \epsilon_g \dfrac{d \Phi_\Lambda}{d s} + \int_{\mathcal{J}\cap \Omega_i} \epsilon_g \dfrac{d \Phi_r}{d s}\phi.
	\end{equation*}
	
	\noindent Assuming that the branches can be approximated by cylinders, then the bases are circles of radius $R_i$ and the integral becomes:
	
	\begin{equation*}
		\int_{\mathcal{J}\cap \Omega_i} \epsilon_g \dfrac{d \Phi_\Lambda}{d s} + \int_{\mathcal{J}\cap \Omega_i} \epsilon_g \dfrac{d \Phi_r}{d s}\phi = \epsilon_g \dfrac{d\Phi_\Lambda}{ds}\pi R_i^2 + \epsilon_g\dfrac{d\Phi_r}{ds}\int_0^{2\pi}\int_0^R r^2 r dr = \pi R_i^2\epsilon_g \dfrac{d\Phi_\Lambda}{ds} + \dfrac{\pi}{2}R_i^4 \epsilon_g\dfrac{d\Phi_r}{ds}.
	\end{equation*}
	
\noindent Let us denote by $\Phi_{\Lambda,i}$ and $\Phi_{r,i}$ the corresponding unknowns on each branch $\Omega_i$, $a_i=\pi R_i^2\epsilon_g$ and $b_i = \dfrac{\pi}{2}R_i^4 \epsilon_g\dfrac{d\Phi_{r,i}}{ds}, \ i=1,2,3$. Then, equation~\eqref{eq:P2:primal:1}, integrated over $\mathcal{J}$, becomes:
	
	\begin{equation*}
		a_1\dfrac{d\Phi_{\Lambda,1}}{ds} + b_1 -  a_2\dfrac{d\Phi_{\Lambda,2}}{ds} - b_2 - a_3\dfrac{d\Phi_{\Lambda,3}}{ds} - b_3 =
		\int_\mathcal{J} f - \int_\mathcal{S} \epsilon_g\nabla\Phi_g\cdot \mathbf{n}.
	\end{equation*}
	
	\noindent The quantities $b_i$, dependent on $R_i^4$, $\int_\mathcal{J} f$, proportional to $R_i^3$, are negligible, compared to $a_i$. Moreover, according to the splitting~\eqref{eq:splitting_phi_g} of $\Phi_g$, the normal component $\nabla\Phi_g\cdot\mathbf{n}$ is only dependent on $R_i^2$. If we integrate it on $\mathcal{S}$, whose area is of order $R_i^2$, the term $- \int_\mathcal{S} \epsilon_g\nabla\Phi_g\cdot \mathbf{n}$ is of order $R_i^4$ and, thus, negligible. Then, the condition on the junctions in the case of an arbitrary number of edges becomes:
	
	\begin{equation*}
		\sum_{i\in \mathcal{E}^+_j} a_i \dfrac{d\Phi_{\Lambda,i}}{ds} = \sum_{i\in \mathcal{E}^-_j} a_i \dfrac{d\Phi_{\Lambda,i}}{ds}, \quad j=1,\dots,\mathcal{N}_\text{bif}.
	\end{equation*}
	
	\noindent The final problem reads as follows:
	
	\begin{equation}
		\begin{dcases}
			\epsilon_s^{-1} \mathbf{D}_s  + \nabla\Phi_s = 0, & \text{in}\ \Omega, \\
			\nabla\cdot \mathbf{D}_s - 4\pi \epsilon_g\sum_{i\in\mathcal{E}}(\Phi_{\Lambda,i} - \hat{\Phi}_s)\delta_{\Lambda,i} = -2\pi\sum_{i\in\mathcal{E}} R_ig_i\delta_{\Lambda,i}, & \text{in}\ \Omega, \\
			-\pi R_i^2 \dfrac{d}{ds}\left(\epsilon_g\dfrac{d\Phi_{\Lambda,i}}{d s}\right) + 4\pi \epsilon_g \left(\Phi_{\Lambda,i} - \hat{\Phi}_s\right) 
			= \pi R_i^2 \dfrac{q_i}{\epsilon_0}, & \text{on}\ \Lambda_i, \ i=1,\dots,\mathcal{N}_e,\\
			\Phi_{\Lambda,i} = \hat{\bar{\Phi}}_g, & \text{on\ }\mathcal{N}_D, \\
			\dfrac{d\Phi_{\Lambda,i}}{ds} = 
			-\frac{2}{\epsilon_g} g, & \text{on\ }\mathcal{N}_N, \\
			\sum_{i\in \mathcal{E}^+_j} a_i \dfrac{d\Phi_{\Lambda,i}}{ds} = \sum_{i\in \mathcal{E}^-_j} a_i \dfrac{d\Phi_{\Lambda,i}}{ds}, & \text{on\ }\mathcal{N_\text{bif}}, \ j = 1,\dots,\mathcal{N}_\text{bif},\\
			\Phi_s = \bar{\Phi}_s, & \text{on}\ \partial\Omega_D, \\
			\mathbf{D}_s\cdot\mathbf{n} = \nu, & \text{on}\ \partial\Omega_N,
		\end{dcases}
		\label{eq:3d-1d:graph}
	\end{equation}
	
	\noindent where we have denoted by $\mathcal{N}_e$ the number of edges of the one-dimensional graph, $\mathcal{N}_N$ the set of nodes of $\Lambda$ where Neumann conditions are imposed, $\mathcal{N}_D$ the set of Dirichlet nodes of $\Lambda$ and $\mathcal{N}_\text{bif}$ the number of bifurcation nodes.
	
	\section{Numerical methods}
	\label{section:NumericalMethods}
	
	We present a discretization based on Finite Element Methods (FEM) for the numerical solution of problem~\eqref{eq:3d-1d:graph}. We employ independent meshes for the one and three-dimensional domains, consisting of segments and tetrahedra, respectively. In particular, in presence of bifurcations, the one-dimensional domain can be seen as a quantum graph, i.e. a graph endowed with an implicit metric structure an a differential operator defined on its edges and vertices (see~\cite{berkolaiko2013introduction} for details). \textcite{arioli2018finite} proposed an extension of FEM on such domains, where a partition of each edge is obtained by the addition of further nodes, thus creating a structure called \textit{extended graph}. The three-dimensional domain $\Omega$, instead, is partitioned in a tetrahedral conforming mesh on which we apply mixed FEM \cite{boffi2013mixed}.
	
	The weak formulation for the complete problem~\eqref{eq:3d-1d:graph} reads as follows:\\
	
	Find $(\mathbf{D}_s,\ \Phi_s,\ \Phi_\Lambda,\ \lambda_N,\ \lambda_D) \in V_s\times W_s \times W_\Lambda \times H^{-1/2}(\partial\Omega_N) \times \mathbb{R} $ such that:
	
	\begin{equation*}
		\begin{dcases}
			\mathcal{A}(\mathbf{D}_s,\mathbf{v}) + \mathcal{B}(\mathbf{v},\Phi_s) = \mathcal{F}_D(\mathbf{v}), & \forall \mathbf{v}\in V_s, \\
			\mathcal{B}(\mathbf{D}_s,\phi) + c(\Phi_\Lambda-\hat{\Phi}_s,\hat{\phi}) + l(\lambda_N,\mathbf{v}) = \mathcal{G}(\hat{\phi}), & \forall \phi\in W_s, \\
			\mathcal{A}_\Lambda(\Phi_\Lambda,\psi) + c(\Phi_\Lambda - \hat{\Phi}_s,\psi) + d(\lambda_D,\psi) = \mathcal{F}(\psi), & \forall \psi\in W_\Lambda, \\
			l(\xi_N,\mathbf{D}_s) = \mathcal{F}_N(\xi_N), & \forall \xi_N\in H^{-1/2}(\partial \Omega_N), \\
			d(\xi_D,\Phi_\Lambda) = \mathcal{F}_{D,\Lambda}(\xi_D), & \forall \xi_D\in \mathbb{R}
		\end{dcases}
	\end{equation*}
	
	\noindent where we have defined 
	
	\begin{equation*}
		\begin{alignedat}{5}
			&l :& \ V_s\times H^{-1/2}(\partial \Omega_N) &\longrightarrow \mathbb{R}, \qquad& l(\nu,\mathbf{v}) &= \int_{\partial\Omega_N} \nu \mathbf{v}\cdot\mathbf{n}; \\
			&d\ :& \ W_s\times \mathbb{R} &\longrightarrow \mathbb{R}, \qquad& d(\nu,\psi) &= \nu(0)\psi; \\
			&\mathcal{F}_D\ :& \ V_s &\longrightarrow \mathbb{R}, \qquad& \mathcal{F}_D(\mathbf{v}) &= -\int_{\partial\Omega_D} \bar{\Phi}_s \mathbf{v}\cdot\mathbf{n}; \\
			&\mathcal{G}\ :& \ W_s &\longrightarrow \mathbb{R}, \qquad& \mathcal{G}(\phi) &= <G,\phi>; \\
			&\mathcal{F}\ :&\ W_\Lambda &\longrightarrow \mathbb{R}, \qquad& \mathcal{F}(\psi) &= <F,\psi> \\
			&\mathcal{F}_N \ :&\ H^{-1/2}(\partial \Omega_N) &\longrightarrow \mathbb{R}, \qquad& \mathcal{F}_N(\nu) &= \int_{\partial\Omega_N} \nu \bar{\mu};\\
			&\mathcal{F}_{D,\Lambda}\ :& \ \mathbb{R} &\longrightarrow \mathbb{R}, \qquad & \mathcal{F}_{D,\Lambda}(\nu) &= \hat{\bar{\Phi}}_g\nu,		
		\end{alignedat}
	\end{equation*}
	
	\noindent while $\mathcal{A}$, $\mathcal{B}$, $c$, $\mathcal{A}_\Lambda$, $F$, $G$ and all the aforementioned spaces were introduced in Section~\ref{section:reduced}.\\
	Here we have imposed the Neumann boundary conditions on the 3D domain and Dirichlet ones on the 1D by means of Lagrange multipliers $\lambda_N$ and $\lambda_D$. Notice that the conditions at the bifurcations are natural for the primal formulation of the 1D problem.
	
	\subsection{Discrete weak formulation}
	\label{section:NM:DWF}
	For the discretization of the one-dimensional problem on $\Lambda$ (see~\cite{benzi_golub_liesen_2005}) we start by introducing a set of $n_e+1$ equispaced points $x_j^e$, $j=0,1,\dots,n_e$, on each edge $e$, partitioning it into $n_e$ intervals of length $k_e$, and associate to each internal point $x_j^e,\ j=1,\dots,n_e-1$ the standard hat basis function $\psi_j^e$. We also introduce the functions $\phi_v,\ v\in\mathcal{N}$, such that $\phi_v = 1$ on the vertex $v$, $\phi_v = 0$ on all the other nodes on the neighboring edges of $v$ and is piecewise linear on each segment of the mesh. This set of functions forms a basis for the discrete space of piece-wise polynomial functions of order 1 on the neighboring segments to each bifurcation node.
	
	Then, we define the FEM space on $\Lambda$ as the direct sum of spaces spanned by all the sets of basis functions $\psi_j^e,\ e\in\mathcal{E}$ and $\phi_v,\ v\in\mathcal{N}$:
	
	\begin{equation*}
		W_{\Lambda,k} = \left( \bigoplus_{e\in\mathcal{E}} W_{\Lambda,k}^e \right) \oplus \text{span}\{\phi_v\}_{v\in\mathcal{N}},
	\end{equation*}
	
	\noindent where $W_{\Lambda,k}^e = \left\{w\in W_\Lambda \ : \ w\big|_{\left[x_j^e,x_{j+1}^e\right]} \in \mathbb{P}^1, \ j=0,1,\dots,n_e-1\right\}$ denotes the finite-dimensional space of piecewise linear functions on the edge $e$.
	
	Finally, let us denote by $\{\psi_i\}_{i=1}^{\text{dim}(W_{\Lambda,k})}$ the set of all the basis functions of $W_{\Lambda,k}$, whose linear combinations characterize all the elements in this finite-dimensional space:
	
	\begin{equation*}
		v = \sum_{i=1}^{\text{dim}(W_{\Lambda,k})} v_i\psi_i, \quad \forall v\in W_{\Lambda,k}.
	\end{equation*}
	
	\noindent In particular, it holds for our discrete approximation of the unknown $\Phi_\Lambda$, whose expansion coefficients we denote by $\mathbf{\Phi_{\Lambda,k}} \in\mathbb{R}^{n_\Lambda}$, with $n_{\Lambda} = \text{dim}(W_{\Lambda,k})$.
	
	For the three-dimensional problem we employ the standard Raviart-Thomas dual mixed FEM spaces of lowest order on a tetrahedral mesh $\mathcal{T}_h = \{T\}$ made of $n_T$ elements, discussed in \cite{babuvska2003mixed}. The considered finite-dimensional subspaces of $V_s$, $W_s$ and $H^{-1/2}(\partial\Omega_N)$ are, respectively:
	
	\begin{equation*}
		\begin{aligned}
			V_{s,h} & = \left\{ \mathbf{v}\in V_s \ : \ \mathbf{v}\big|_{T} \in \mathbb{RT}^0(T) \ \forall T\in\mathcal{T}_h \right\}, \\
			W_{s,h} & = \left\{ \phi \in W_s \ : \ \phi\big|_{T}\in \mathbb{P}^0(T) \ \forall T\in\mathcal{T}_h \right\}, \\
			H^{-1/2}_h & = \left\{ \nu \in W_s \ : \ \nu\big|_{\Gamma_j}\in \mathbb{P}^0(\Gamma_j), \ j=1,\dots,n_\Gamma \right\},
		\end{aligned}
	\end{equation*}
	
	\noindent where $\{\Gamma_j\}_{j=1}^{n_\Gamma}$ is the partition of $\partial \Omega_N$ induced by the 3D mesh $\mathcal{T}_h$, $\mathbb{RT}^0$ is the Raviart-Thomas (RT) of lowest degree \cite{raviart1977mixed} and $\mathbb{P}^0$ the space of polynomials of order 0. Then, $\dim(V_{s,h})=n_F$, $\dim(W_{s,h}) = n_T$ and $\dim(H_h^{-1/2})=n_\Gamma$, where $n_F$, $n_T$ and $n_\Gamma$ are the number of faces, elements of 3D mesh $\mathcal{T}_h$ and elements of the partition $\{\Gamma_j\}_{j=1}^{n_\Gamma}$ of the boundary, respectively.\\
	Let $\{\mathbf{v}_i\}_{i=1}^{n_F}$, $\{\phi_i\}_{i=1}^{n_T}$ and $\{\nu_i\}_{i=1}^{n_\Gamma}$ be the bases of the spaces $V_{s,h}$, $W_{s,h}$ and $H^{-1/2}_h$, respectively, and denote by $\mathbf{D}_{s,h}$, $\bm{\Phi}_{s,h}$ and $\bm{\lambda}_{N,h}$ the vectors of expansion coefficients of the discrete approximations of $\mathbf{D}_s$, $\Phi_s$ and $\lambda_N$ with respect to these bases. Then, the discrete weak formulation with Lagrange multipliers of the mixed-dimensional problem~\eqref{eq:3d-1d:graph} is the following:\\
	
	Find $(\mathbf{D}_{s,h},\ \bm{\Phi}_{s,h},\ \Phi_{\Lambda,k},\ \bm{\lambda}_{N,h},\ \lambda_D) \in \mathbb{R}^{n_T} \times \mathbb{R}^{n_T} \times \mathbb{R}^{n_\Lambda} \times \mathbb{R}^{n_\Gamma} \times \mathbb{R} $ such that:
	
	\begin{equation*}
		\begin{dcases}
			\mathcal{A}(\mathbf{D}_{s,h},\mathbf{v}_h) + \mathcal{B}(\mathbf{v}_h,\bm{\Phi}_{s,h}) = \mathcal{F}_D(\mathbf{v}_h), & \forall \mathbf{v}_h\in \mathbb{R}^{n_T}, \\
			\mathcal{B}(\mathbf{D}_{s,h},\bm{\phi}_h) + c_{\Lambda s}(\bm{\Phi}_{\Lambda,k},\hat{\bm{\phi}}_h) -c_{ss}(\hat{\bm{\Phi}}_{s,h},\hat{\bm{\phi}}_h) + l(\bm{\lambda}_{N,h},\mathbf{v}_h) = \mathcal{G}(\hat{\bm{\phi}}_h), & \forall \bm{\phi}_h\in \mathbb{R}^{n_T}, \\
			\mathcal{A}_\Lambda(\bm{\Phi}_{\Lambda,k},\bm{\psi}_k) + c_{\Lambda\Lambda}(\bm{\Phi}_{\Lambda,k},\psi_k) - c_{\Lambda s}(\hat{\bm{\Phi}}_{s,h},\bm{\psi}_k) + d(\lambda_D,\bm{\psi}_k) = \mathcal{F}(\bm{\psi}_k), & \forall \bm{\psi}_k\in \mathbb{R}^{n_\Lambda} , \\
			l(\bm{\nu}_{N,h},\mathbf{D}_{s,h}) = \mathcal{F}_N(\bm{\nu}_{N,h}), & \forall \bm{\nu}_{N,h}\in \mathbb{R}^{n_\Gamma} , \\
			d(\nu_D,\bm{\Phi}_{\Lambda,k}) = \mathcal{F}_{D,\Lambda}(\nu_D), & \forall \nu_D\in \mathbb{R}
		\end{dcases}
	\end{equation*}
	
	\noindent and the corresponding linear system is given by:\\
	
	\begin{equation}
		\begin{bmatrix}
			A_s & B^T              & 0                  & L^T & 0 \\
			B   & C_{ss}           & C_{\Lambda s}^T    & 0   & 0 \\
			0   & C_{\Lambda s}    & A_\Lambda          & 0   & L_D^T \\
			L   & 0                & 0                  & 0   & 0 \\
			0   & 0                & L_D                & 0   & 0
		\end{bmatrix}
		\begin{bmatrix}
			\mathbf{D}_{s,h} \\
			\bm{\Phi}_{s,h}	\\
			\bm{\Phi}_{\Lambda,k} \\
			\bm{\lambda}_{N,h} \\
			\lambda_D
		\end{bmatrix}
		=
		\begin{bmatrix}
			\mathbf{F}_D \\
			\mathbf{G} \\
			\mathbf{F} \\
			\mathbf{F}_N \\
			F_{D,\Lambda}
		\end{bmatrix}
		\label{eq:3d-1d:matrix}
	\end{equation}
	
	\noindent where the the blocks of the system matrix are defined as follows:
	
	\begin{equation*}
        \begin{split}
    		&\begin{alignedat}{7}
    			&[A_s]_{i,j} &= \mathcal{A}(\mathbf{v}_i,\mathbf{v}_j), \quad & i,j=1,\dots,n_T, \qquad
    			&[B]_{i,j} & = - \int_\Omega \phi_j \nabla\cdot\mathbf{u}_i, \quad & i,j=1,\dots,n_T, \\
    			&[C_{\Lambda s}]_{i,j} &= c_{\Lambda s}(\psi_i,\phi_j), \quad & i,j=1,\dots,n_\Lambda, \qquad
    			&[C_{ss}]_{i,j} & = -c_{ss}(\hat{\phi}_i,\hat{\phi}_j), \quad & i,j=1,\dots,n_T,\\
    			&[L]_{i,j} &= l(\nu_i,\mathbf{v}_j), \quad & 
    			\begin{split}
    				i&=1,\dots,n_\Gamma, \\
    				j&=1,\dots,n_T,
    			\end{split} \quad
    			&[\mathbf{L}_D]_{i} & = -d(\lambda_D,\psi_i), \quad & i=1,\dots,n_\Lambda,
    		\end{alignedat}\\
    		&[A_\Lambda]_{i,j} = -\mathcal{A}_\Lambda(\psi_i,\psi_j) - c_{\Lambda\Lambda}(\psi_i,\psi_j), \quad i,j=1,\dots,n_\Lambda.
        \end{split}
	\end{equation*}

    \noindent This system is symmetric and the blocks on the first row arise from the standard mixed FEM discretization of the 3D problem~\eqref{eq:P1:dual:1}-~\eqref{eq:P1:dual:6} in the dielectric domain, while the coupling terms between the two domains, given by the operators $c_{\Lambda\Lambda},\ c_{ss}$ and $c_{\Lambda s}$ appear inside the block
    $\begin{bmatrix}
        C_{ss}        &C_{\Lambda s}^T \\
        C_{\Lambda s} &A_\Lambda
    \end{bmatrix}$.
	
	\subsection{Numerical solver}
	\label{section:NM:solver}
	We solve the discrete linear system~\eqref{eq:3d-1d:matrix} iteratively with GMRES~\cite{saad1986gmres}, and tackle the issue of possible bad conditioning of the matrix and presence of zero blocks on the diagonal using a suitable a preconditioner $P$.
	
	We exploit a preconditioner discussed in \textcite{benzi_golub_liesen_2005} for saddle-point problems of type
	
	\begin{equation*}
		\begin{bmatrix}
			M_1 & M_2^T \\
			M_2 & M_3
		\end{bmatrix},
	\end{equation*}
	
	\noindent where $M_1$ is positive definite, $M_2$ has maximum column
 rank and $M_3$ is negative semi-definite:
	
	\begin{equation*}
		P =
		\begin{bmatrix}
			M_1 & 0 \\
			0 & -(M_3-M_2 M_1^{-1} M_2^T)
		\end{bmatrix}^{-1}.
	\end{equation*}
	
	\noindent If we apply this to our problem, where 
    \begin{equation*}
         M_1 = A_s,\ M_2 = [B,\ 0,\ L,\ 0]^T\ \text{and}\ M_3 = 
        \begin{bmatrix}
			C_{ss}           & C_{\Lambda s}^T    & 0   & 0 \\
			C_{\Lambda s}    & A_\Lambda          & 0   & L_D^T \\
			0                & 0                  & 0   & 0 \\
			0                & L_D                & 0   & 0
		\end{bmatrix},        
    \end{equation*}
    \noindent we obtain:
	
	\begin{equation*}
		P =
		\begin{bmatrix}
			A_s & 0      & 0 \\
			0   & -\Sigma & -E^T \\
			0   & -E      & 0
		\end{bmatrix}^{-1},
	\end{equation*}
	
	\noindent where the blocks $\Sigma $ and $E$ are defined as:
	
	\begin{equation*}
		\Sigma =
		\begin{bmatrix}
			C_{ss}-BA_s^{-1}B^{T} & C_{\Lambda s}^T & -B A_s^{-1} L^T \\
			C_{\Lambda s}         & A_\Lambda       & 0               \\
            -LA_s^{-1}B^T         & 0               & -L A_s^{-1}L^T    
		\end{bmatrix},
		\qquad
		E = 
		\begin{bmatrix}
			0 & L_D & 0
		\end{bmatrix}.
	\end{equation*}
	
	\noindent Since the inverse of a diagonal block matrix is a diagonal block matrix with diagonal blocks given by the inverse of the original diagonal blocks, we need to invert the bottom-right block of the resulting matrix, which is again possibly ill-conditioned and saddle-point. Therefore, we adopt the same strategy, thus obtaining the following block-diagonal preconditioner in the end:
	
	\begin{equation*}
		P = 
		\begin{bmatrix}
			A_s^{-1} & 0            & 0 \\
			0        & -\Sigma^{-1} & 0 \\
			0        & 0            & -E\Sigma^{-1}E^T
		\end{bmatrix}.
	\end{equation*} 
	
	\noindent A deep investigation of the properties of this final preconditioner is still an ongoing work.  However, as we will see in the next section, the number of iterations needed by the GMRES solver is moderate and allows the solution of the presented coupled problem in reasonable time.
	
	\section{Results}
	\label{section:Results}
	We conclude this work by presenting some tests on a simple three-dimensional cylindrical domain, coupled with different one-dimensional geometries approximating thin inclusions. In particular, in Section~\ref{section:TC1} we analyze a straight line going from the center of the top basis of the cylinder to the center of the bottom basis, a simple geometry that allows us to compare the numerical solution to the reduced 1D-3D problem~\eqref{eq:3d-1d:1}-~\eqref{eq:3d-1d:7} to that of the original 3D-3D problem~\eqref{eq:3d-3d}. Knowing the exact solution, we can validate the model as the radius of $\Omega_g$ tends to 0 and evaluate the dependence of the accuracy on the size of the elements in the one-dimensional mesh. Then, in Section~\ref{section:TC2}  we observe the effect of an immersed tip inside the 3D domain and in Section~\ref{section:TC3}  we move to the solution of a very complex electrical treeing.
	In all the tests the domain $\Omega$ consists of a cylinder whose bases are parallel to the $x,y$ plane and centered in $(0.5,0.5,0)$ and $(0.5,0.5,1)$, respectively, with radius $R_s=1$. For all these tests we have considered the same dielectric constant in both domains: $\epsilon_s=\epsilon_g=1$.
	
	\subsection{TC1: Cylinder and straight line}
	\label{section:TC1}

	We start with a simple geometry, represented in Figure~\ref{figure:TC1:mesh}, consisting of the cylinder introduced above, discretized with a finer mesh close to the centerline, and a line with endpoints in the centers of the bases. In particular, we consider elements with maximum radius $10^{-2}$ in the proximity of $\Lambda$ and $10^{-1}$ in the rest of the domain. The one-dimensional domain is, instead, partitioned into 100 segments. We suppose that this geometry is the mixed-dimensional approximation of two three-dimensional domains where the innermost one, representing the gas, is a cylinder of given radius $R$. Thus, we consider the following exact solution, adapted from \cite{gjerde2021mixed} on $\Omega_s$ and $\Omega_g$:
	
	\begin{equation*}
		\begin{alignedat}{2}
			\Phi_s^\text{ex}(r,s) &= \left[1-\log\left(\dfrac{r}{R}\right)\right]R, \quad& \text{in\ }\Omega_s, \\
			\Phi_g^\text{ex}(r,s) &= \dfrac{1}{2}\dfrac{r^2}{R} + \dfrac{1}{2}R, \quad& \text{in\ }\Omega_g, \\
			\mathbf{D}_s^\text{ex}(r,s) &= \dfrac{R}{r}\mathbf{r}, \quad & \text{in\ }\Omega_s,\\
			\mathbf{D}_g^\text{ex}(r,s) &= \dfrac{r}{R}\mathbf{r}, \quad & \text{in\ }\Omega_g, \\
		\end{alignedat}
	\end{equation*}
	
	\noindent Notice that all the quantities only depend on the radial coordinate $r$ and the electric fields only have non-zero radial component, the continuity condition of the potential on the interface is satisfied and the jump of the normal component of the electric field is zero. Moreover, if we decompose the gas potential $\Phi_g^\text{ex}$ as in equation~\eqref{eq:splitting_phi_g}, we obtain $\Phi_r = \Phi_\Lambda = \dfrac{1}{2R}$ on $\Lambda$.\\
	From this exact solution we can compute the boundary conditions and forcing terms of our specific problem, obtaining:
	
	\begin{equation}
		\begin{cases}
			\mathbf{D}_s  + \nabla\Phi_s= 0, & \text{in}\ \Omega, \\
			\nabla\cdot\mathbf{D}_s - 4\pi (\Phi_\Lambda - \hat{\Phi}_s)\delta_\Lambda = 0, & \text{in}\ \Omega, \\
			-\pi R^2 \dfrac{d}{ds}\left(\dfrac{d\Phi_\Lambda}{d s}\right) + 4\pi \left(\Phi_\Lambda - \hat{\Phi}_s\right) 
			= 2\pi R, & \text{on}\ \Lambda,\\
			\Phi_\Lambda(0) = \Phi_\Lambda(1) = \dfrac{1}{2}R, \\
			\Phi_s = \Phi_s^{\text{ex}}, & \text{on}\ \partial\Omega_{D}, \\
			\mathbf{D}_s\cdot\mathbf{n} = R, & \text{on}\ \partial\Omega_N.
		\end{cases}
		\label{eq:TC1}
	\end{equation}
	
	\noindent Here the Neumann boundary $\partial\Omega_N$ coincides with the two bases of the cylinder and the Dirichlet boundary $\partial\Omega_D$ with its lateral surface.
	
	\begin{figure}
		\centering\includegraphics[width=.25\textwidth]{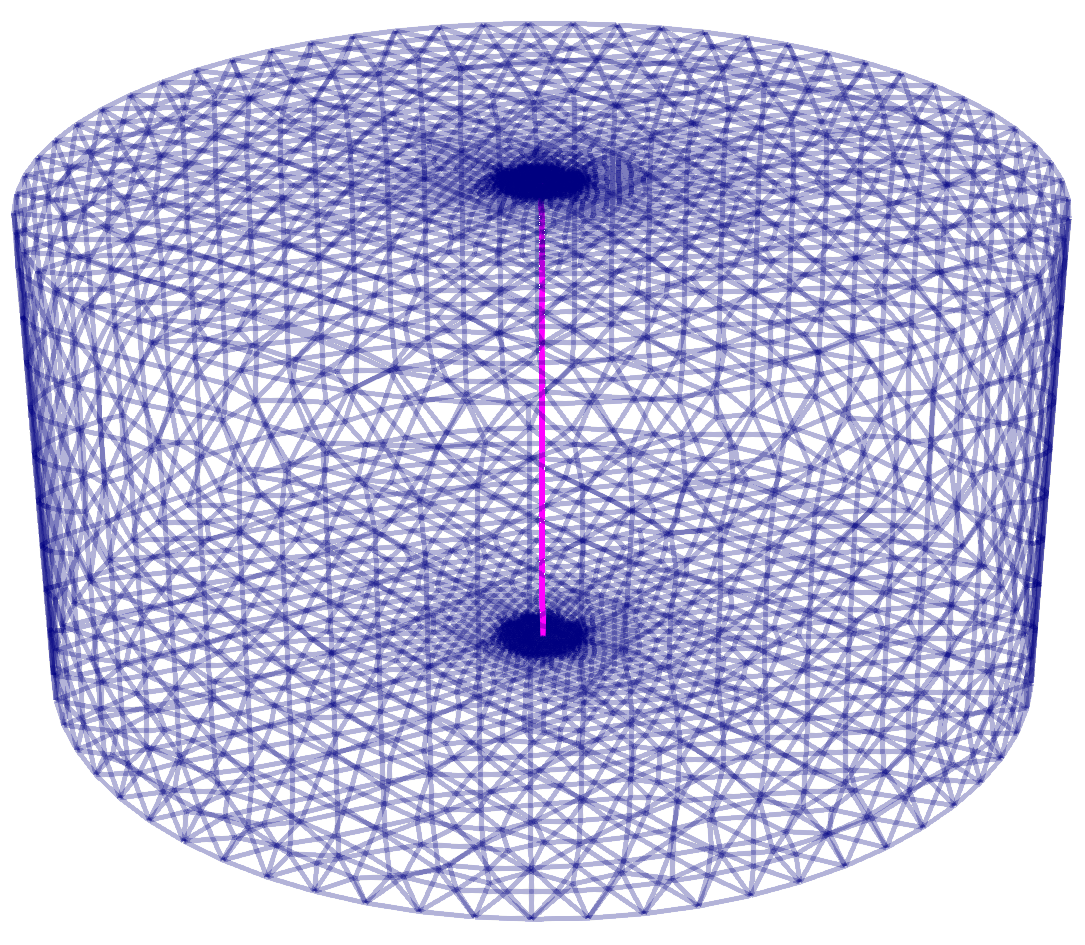}
		\caption{\textbf{TC1 -} Discretizations of the three-dimensional (blue) and one-dimensional (purple) domains.}
		\label{figure:TC1:mesh}
	\end{figure}
	\noindent In Figures~\ref{figure:TC1:top}-\ref{figure:TC1:section} we can observe the computed potential and electric field in the three-dimensional dielectric domain. The radial decrease of the potential on the top basis, expected as a consequence of the charge distribution along the axis of the cylinder, is shown in Figure~\ref{figure:TC1:top}, is also observed in Figure~\ref{figure:TC1:front}, with the expected value on the boundary. The major difficulty in the approximation is represented by the electric field, which presents a singularity on the centerline $\Lambda$, with magnitude going to infinity. Indeed, we can observe in Figure~\ref{figure:TC1:top} that the arrows representing its direction and magnitude, become much larger closer to the center, and in Figure~\ref{figure:TC1:lines} its magnitude shows a rapid increase there. \\
	These results are comparable to the numerical solution of the equidimensional original three-dimensional problem resolved by a fine grid and present qualitatively similar radial trend for the potential and direction and intensity of the electric field. For this comparison we have fixed the radius of the inner cylinder as $R=10^{-2}$. We can observe in Figure~\ref{figure:TC1:3d1d} that the region where the potential has the highest values in the 3D problem is wider because it coincides with the original gas-filled cylinder, while for the mixed-dimensional solution it is concentrated along the line $\Lambda$.
	
	\begin{figure}
		\centering\includegraphics[width=.5\textwidth]{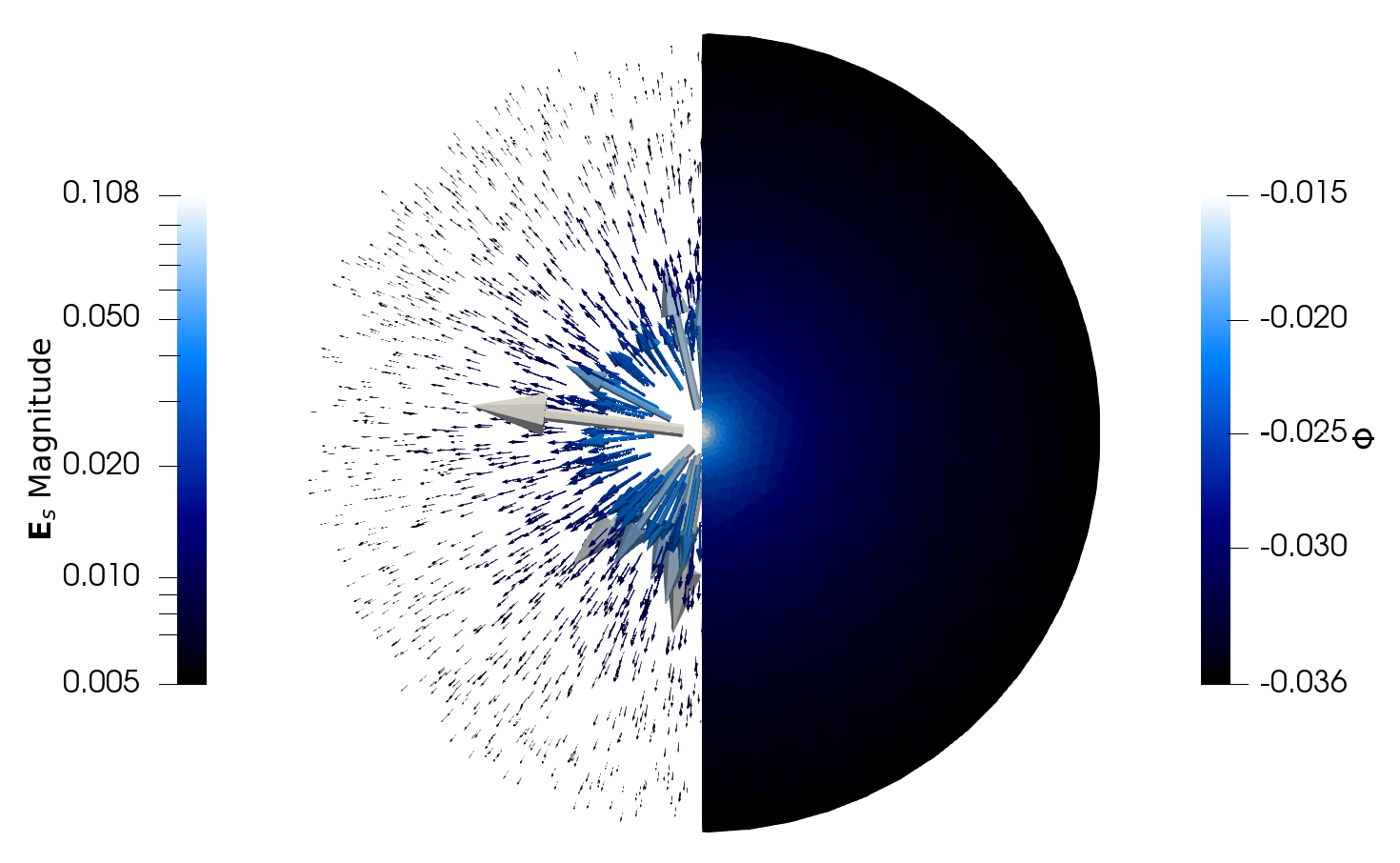}
		\caption{\textbf{TC1 -} Computed potential $\Phi$ (right half) and electric field $\mathbf{D}_s$ (left half) in the dielectric domain, osbserved from the top basis of the domain $\Omega$.}
		\label{figure:TC1:top}
	\end{figure}

	\begin{figure}
		\centering
		\subfloat[Electric potential\label{figure:TC1:front}] {\includegraphics[width=.4\textwidth]{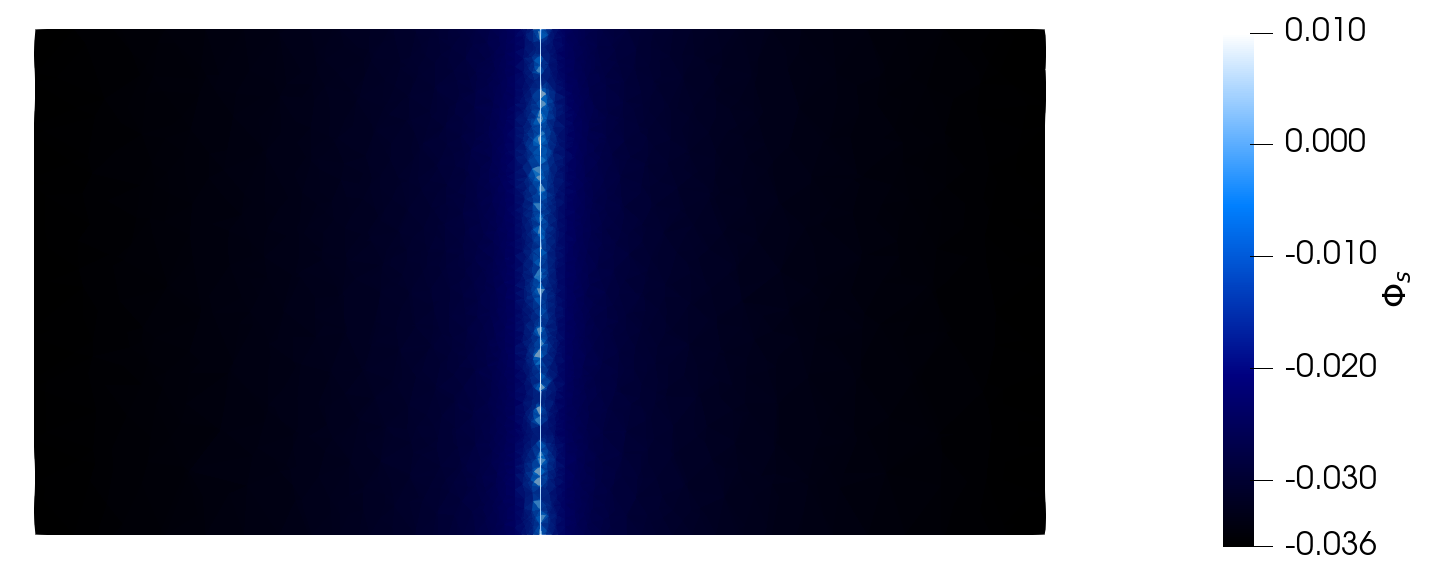}\quad}
		\subfloat[Electric field\label{figure:TC1:lines}] {\includegraphics[width=.4\textwidth]{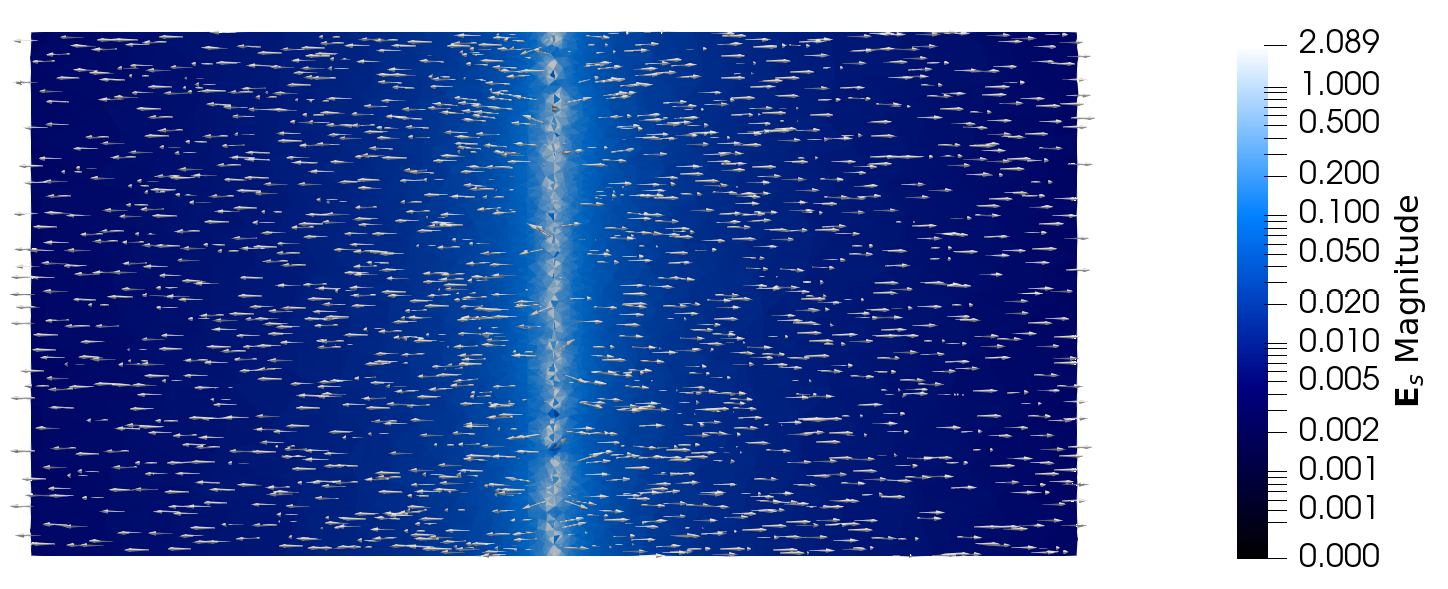}}
		\caption{\textbf{TC1 -} Computed potential $\Phi_s$ and electric field $\mathbf{D}_s$ in the 3D dielectric domain on a longitudinal section of $\Omega$. The magnitude of $\mathbf{D}_s$ is expressed in logarithmic scale and the arrows are tangential to the field streamlines.}
		\label{figure:TC1:section}
	\end{figure}

	\begin{figure}
		\centering
		\subfloat[3D-3D problem\label{figure:TC1:3d}] {\includegraphics[width=.3\textwidth]{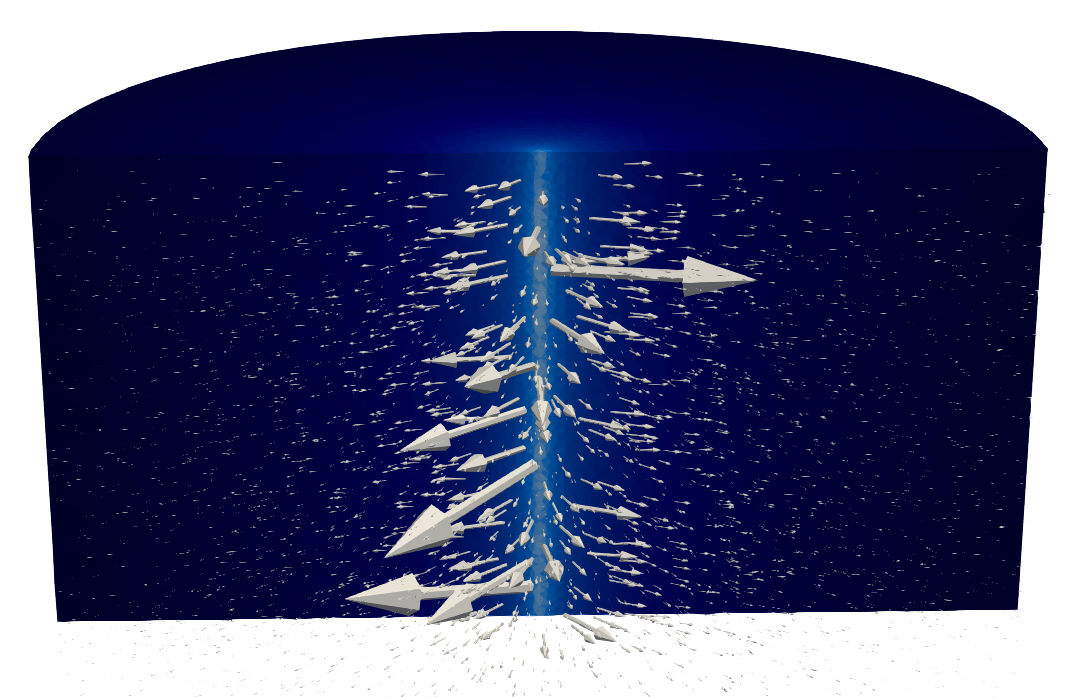}\quad}
		\subfloat[3D-1D problem\label{figure:TC1:1d}] {\includegraphics[width=.3\textwidth]{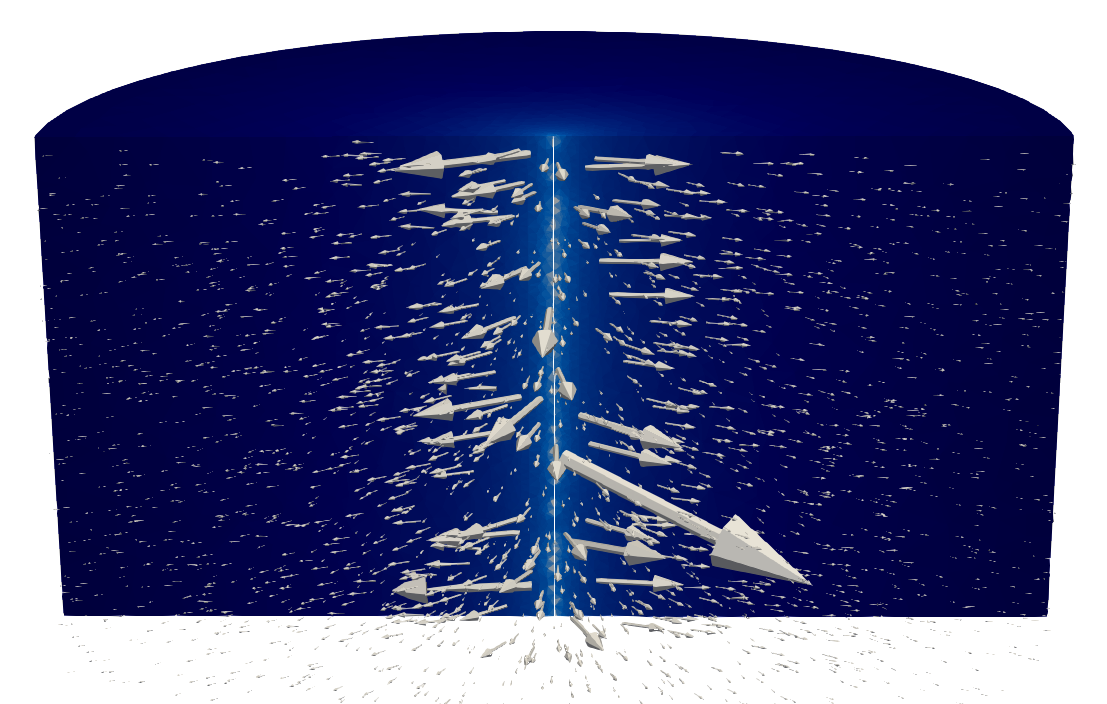}}
		\subfloat{\includegraphics[width=.08\textwidth]{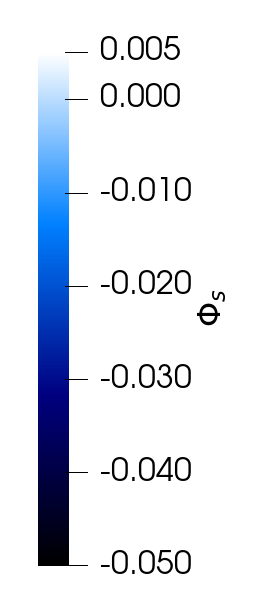}}
		\caption{\textbf{TC1 -} Computed potential $\Phi_s$ and electric field $\mathbf{D}_s$ in the 3D dielectric domain on a longitudinal section of $\Omega$. Comparison between the result on the original equi-dimensional 3D problem and on the mixed-dimensional 1D-3D domain.}
		\label{figure:TC1:3d1d}
	\end{figure}
	
	Finally, knowing the exact solution, we can compute the error committed in the approximation of the electric potential in the dielectric domain, in terms of the $L^2$ norm on $\Omega$, computed as $\text{error} = \|\Phi-\Phi_{ex}\|_{L^2(\Omega)}$. We want to investigate the dependence of this error on the radius $R$ of the initial equi-dimensional coupled problem. 
	As $R$ decreases, tending to zero, the starting three-dimensional cylindrical gas domain $\Omega_g$ tends to collapse on the centerline $\Lambda$ and we would expect the solution given by the mixed-dimensional problem to approximate better and better the exact solution to the original problem. \\ Figure~\ref{figure:TC1:validation} shows the plot of the $L^2$ error on the coarse mesh discussed above. We can see that the error decreases with $R$, and a linear dependence is observed. This plot provides us with a validation for the proposed geometrical reduction of the model.
	
	\begin{figure}
		\centering
		\begin{tikzpicture}
			\begin{loglogaxis}
				[title={Model validation},
				scale=.5,
				xlabel={$R$},
				ylabel={$\| \Phi - \Phi^{\text{ex}}  \|_{L^2(\Omega)}$},
				xtick={},
				ytick={},
				legend pos=north west,
				ymajorgrids=true,
				xmajorgrids=true,
				grid style=dashed,
				legend style = {draw=black, font=\scriptsize},
				legend entries ={
					$L^2$-error, $R$},
				scale only axis = true,
				width=0.5\textwidth,
				height=0.4\textwidth,
				]
				\addplot[color=darkBlue, mark=star, mark size = 3.2,
				line width = 1] coordinates {
					(0.000001,3.7848e-07)
					(0.00001,1.50974e-06)
					(0.0001,7.92641e-06)
					(0.001,0.000305378)
					(0.01,0.00543034)
				};
				\addplot[color=darkRed, dotted,
				line width = 1] coordinates {(1e-6,1e-6)(1e-2,1e-2)};	
			\end{loglogaxis}
		\end{tikzpicture}
		\caption{\textbf{TC1 -} $L^2$ error on the numerical approximation of the potential $\Phi$ in the dielectric domain when the radius $R$ of the original 3D gas domain $\Omega_g$ decreases from $10^{-2}$ to $10^{-6}$.}
		\label{figure:TC1:validation}
	\end{figure}
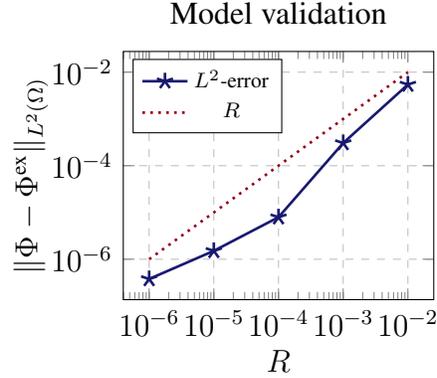

	\subsection{TC2: Cylinder and immersed straight line}
	\label{section:TC2}
	As second test case we observe the solution to the problem on a mixed-dimensional domain made of a cylinder and a straight line having one end on the bottom basis and the other one inside the three-dimensional volume. On the immersed end of the 1D domain we imposed a null-flux condition for the potential $\Phi_\Lambda$, and we consider the same sources and boundary data as in problem~\eqref{eq:TC1}.
	
	We can observe in Figure~\ref{figure:TC2:front} that the potential still has a radial decrease from the charged line towards the lateral surface of the cylinder and the region where it is most intense is very close the charged line. This is due to both the geometry of the one-dimensional domain and to the boundary effect related to the Dirichlet condition on the bottom basis, as discussed in the previous section.\\
	Figure~\ref{figure:TC2:lines} represents the streamlines of the electric field and its magnitude. The magnitude presents a steep increase near the one-dimensional domain, reflecting the singularity produced by a charged line, while the streamlines have the same radial direction as in Figure~\ref{figure:TC1:lines} close to the bottom, but tend to describe hyperbolas with smaller and smaller amplitude closer to the tip, as we would expect the electric field generated by a finite charged line to be. In this Figure we can also notice the importance of a sufficient mesh refinement also in the region above the one-dimensional domain, in order to capture this behavior.
	\begin{figure}
		\centering\includegraphics[width=.25\textwidth]{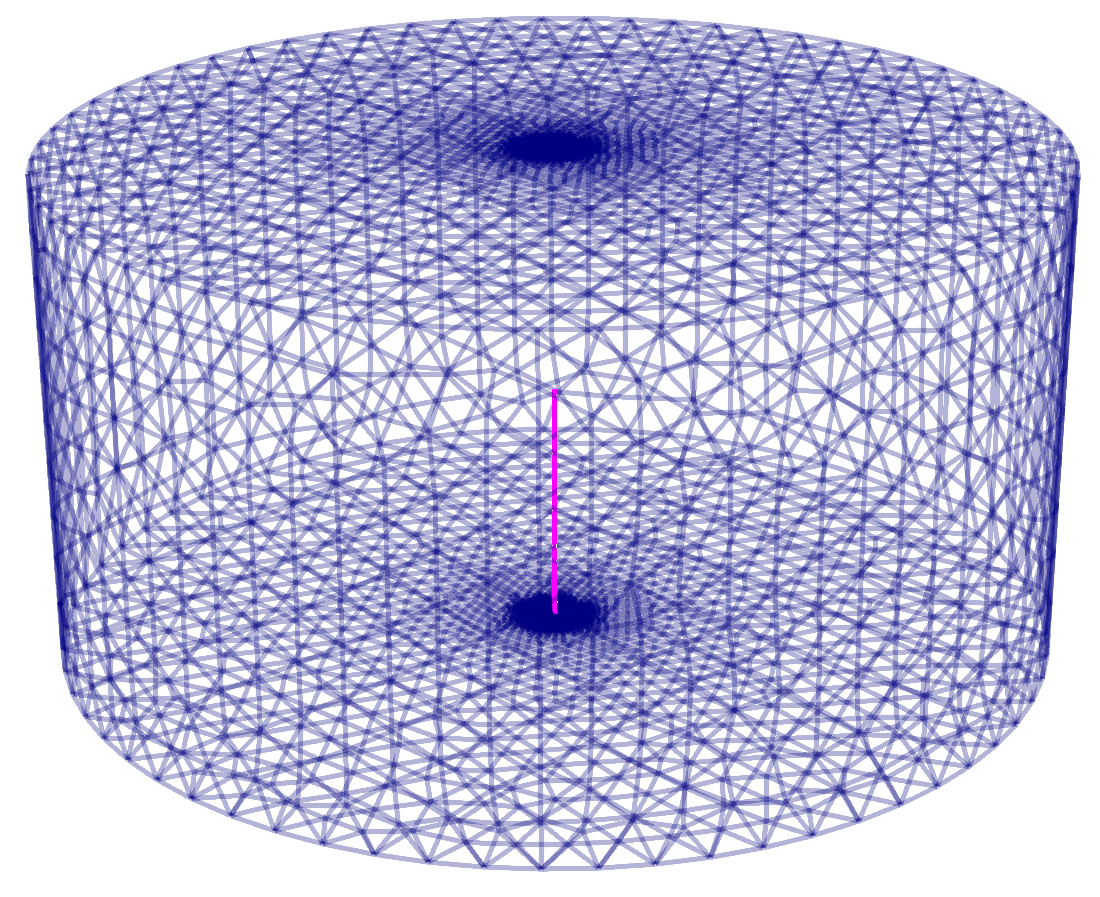}
		\caption{\textbf{TC2 -} Discretizations of the three-dimensional (blue) and one-dimensional (purple) domains.}
		\label{figure:TC2:mesh}
	\end{figure}

	\begin{figure}
		\centering
		\subfloat[Electric potential\label{figure:TC2:front}] {\includegraphics[width=.4\textwidth]{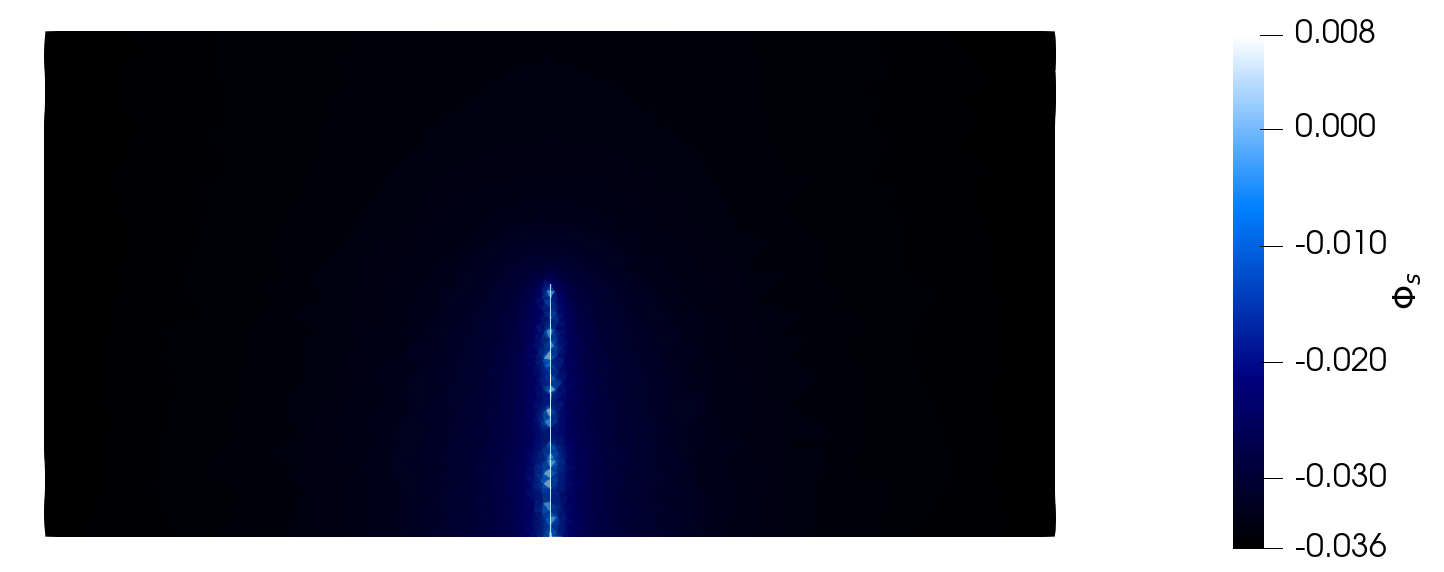}\quad}
		\subfloat[Electric field\label{figure:TC2:lines}] {\includegraphics[width=.4\textwidth]{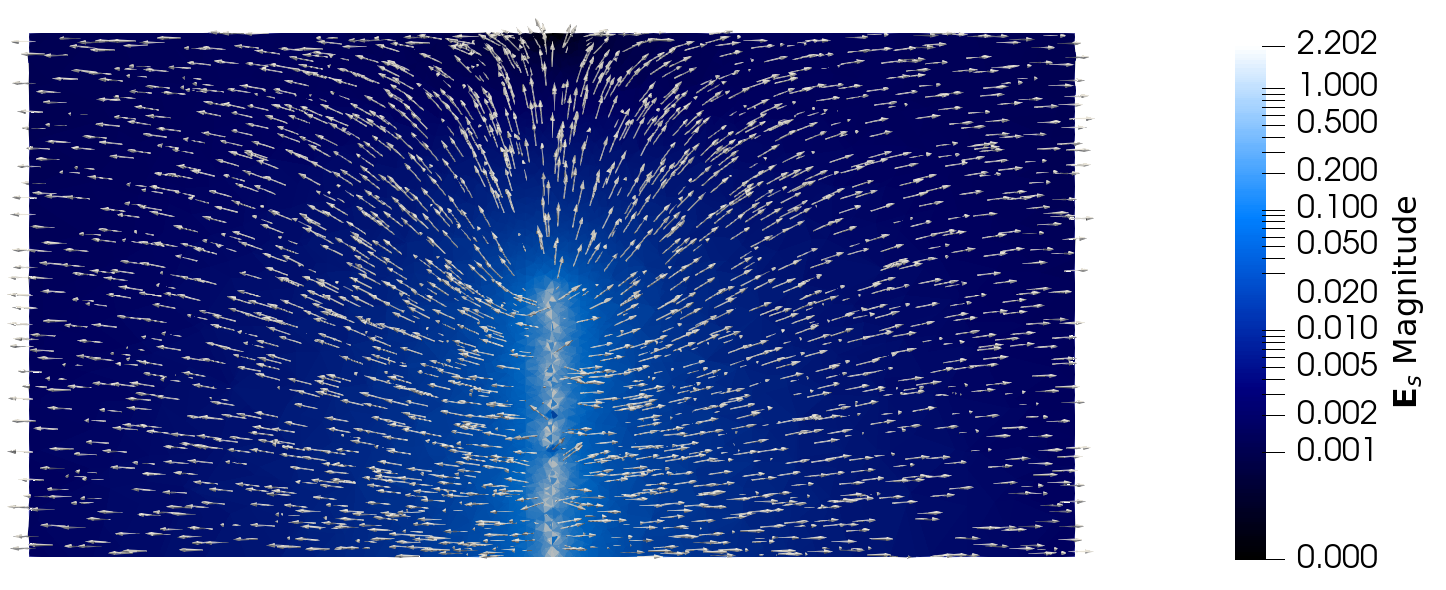}}
		\caption{\textbf{TC2 -} Computed potential $\Phi_s$ and electric field $\mathbf{D}_s$ in the 3D dielectric domain on a longitudinal section of $\Omega$. The magnitude of $\mathbf{D}_s$ is expressed in logarithmic scale and the arrows are tangential to the field streamlines.}
		\label{figure:TC2:section}
	\end{figure}
	
	\subsection{TC3: Electrical treeing}
	\label{section:TC3}
	The final test we present is made on a ramified domain representing the reduction to a one-dimensional graph of a typical electrical treeing, immersed a cylindrical domain (see Figure~\ref{figure:TC3:mesh}). The discretization is based on a coarse three-dimensional grid, refined in a co-axial cylinder enclosing the treeing. For the one-dimensional discretization, we simply take as mesh segments the edges of the graph. This realistic geometry was experimentally obtained from an existing defect in an electric cable: the 3D electrical treeing was detected via X-ray computed tomography \cite{schurch20193d, schurch2015comparison, schurch2014imaging} and the 1D structure was extracted as its skeleton. It is made of 12544 segments and is coupled with a tetrahedral grid on the cylinder composed of 50859 elements. 
	
	In Figure~\ref{figure:TC3:front}, representing a longitudinal section of the cylinder, we can observe that the potential is more intense closer to regions of the cylinder with high concentration of 1D edges, as expected. The electric field on the same section is displayed in Figure~\ref{figure:TC3:lines}, where the highest magnitude is observed in proximity of the ramification and the direction is radial with respect to segments, and curved exiting the tips, in agreement with Test Case 2.
	
	The numerical solution on these grids was computed by a C++ parallel implementation of the solver presented in Section~\ref{section:NumericalMethods}. The implementation is based on the Morgana complex modelling code~\cite{morgana} and relies on the Trilinos library \cite{trilinos-website}.
	The computational time requested for the parallel solution on six cores of a laptop with 16 GiB RAM, Intel(R) Core(TM) i5-9600K CPU, was approximately 5 minutes and 40 seconds, with 266 GMRES iterations with a tolerance $10^{-10}$ on the residual. A speedup could be obtained by employing a more ad hoc preconditioner, reducing the number of iterations of the solver. This is a noteworthy result, considering that such an extended defect could hardly be discretized in three-dimensions, due to its geometrical complexity, while thanks to the mixed-dimensional reduction we were able to solve the problem in reasonable time.
	\begin{figure}
		\centering\includegraphics[width=.25\textwidth]{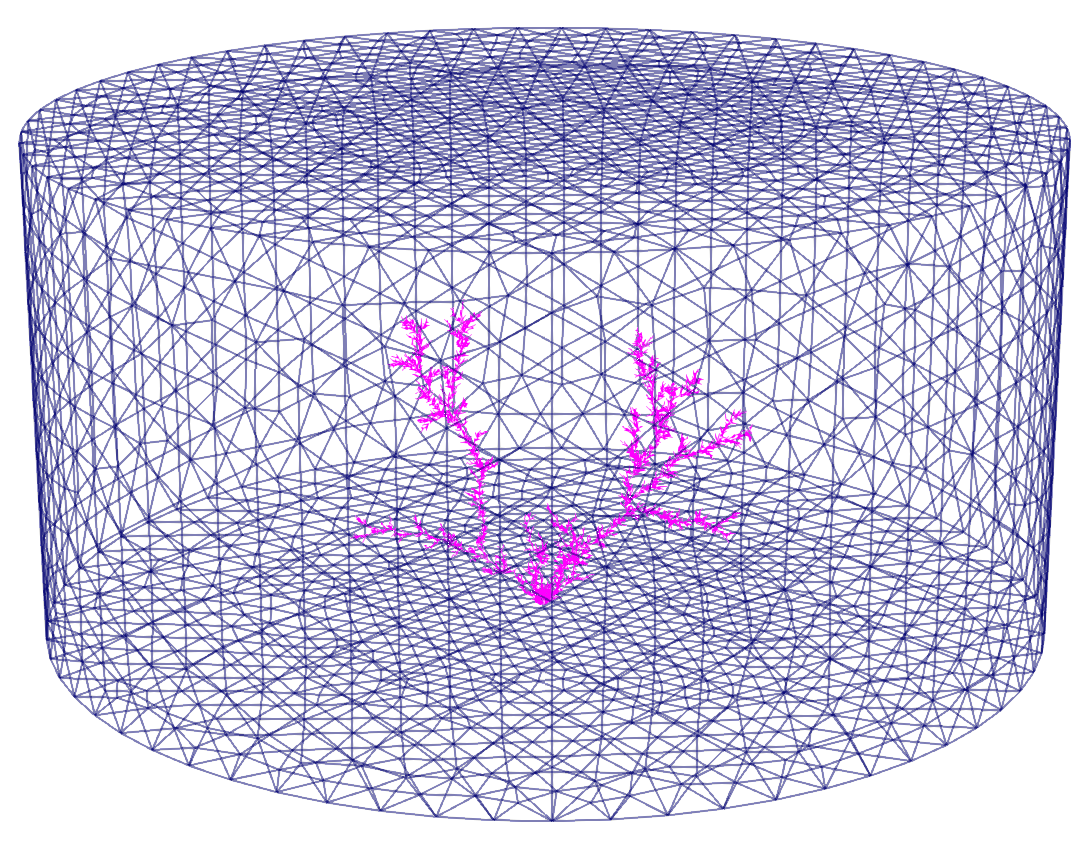}
		\caption{\textbf{TC3 -} Discretizations of the three-dimensional (blue) and one-dimensional (purple) domains.}
		\label{figure:TC3:mesh}
	\end{figure}

	\begin{figure}
		\centering
		\subfloat[Electric potential\label{figure:TC3:front}] {\includegraphics[width=.4\textwidth]{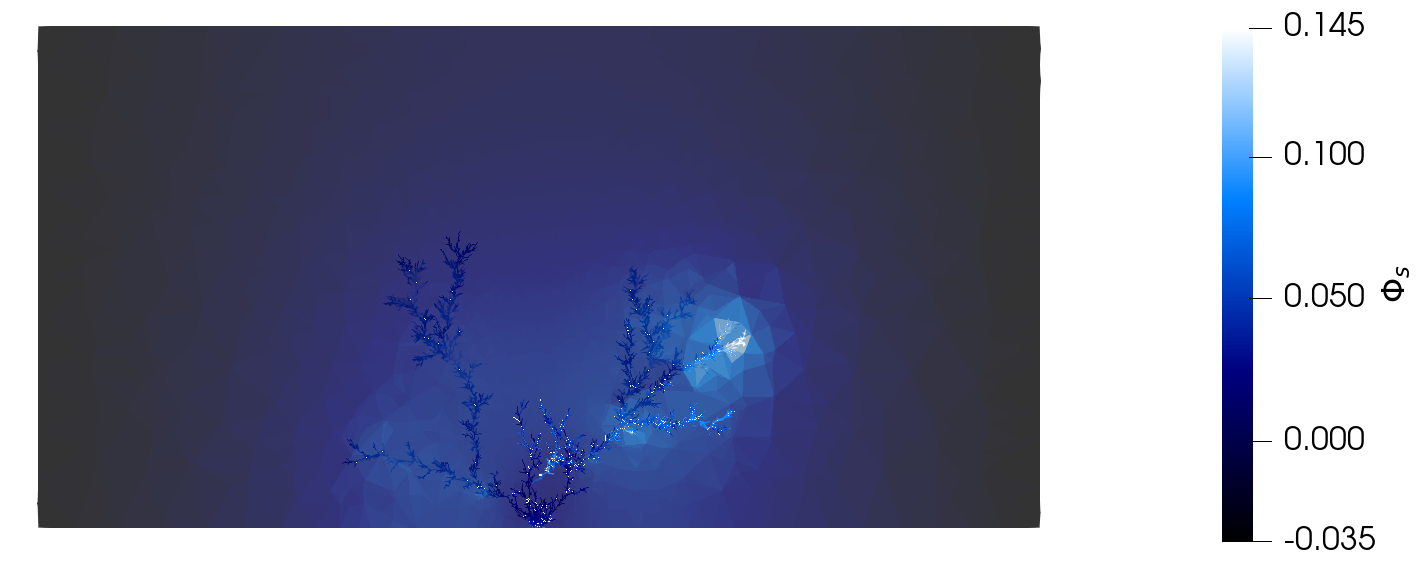}\quad}
		\subfloat[Electric field\label{figure:TC3:lines}] {\includegraphics[width=.4\textwidth]{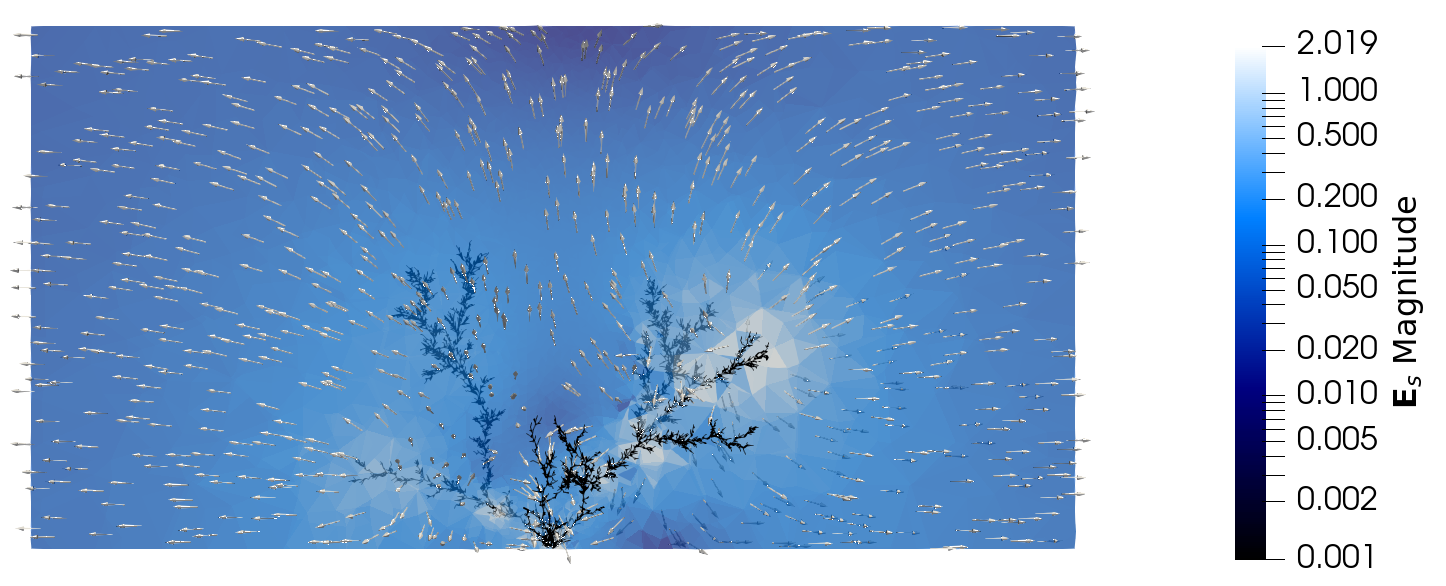}}
		\caption{\textbf{TC3 -} Computed potential $\Phi_s$ and electric field $\mathbf{D}_s$ in the 3D dielectric domain on a longitudinal section of $\Omega$. The magnitude of $\mathbf{D}_s$ is expressed in logarithmic scale and the arrows are tangential to the field streamlines.}
		\label{figure:TC3:section}
	\end{figure}

	\section{Conclusions}
	\label{section:Conclusions}
	Starting from the coupled three-dimensional electrostatic problem in two domains with different dielectric constants, we have deduced a reduced mixed-dimensional model describing the evolution of electric field and potential on a one-dimensional domain embedded in a large three-dimensional one. This problem is relevant because, together with the drift and diffusion of charged particles in the dielectric domain, and the chemical reactions, it models the partial discharges occurring inside insulating components of electric cables, and leading to the formation of electrical trees and their eventual deterioration. We have proven the well-posedness of the resulting continuous problem and solved it numerically with Finite Elements.\\
	We managed to conserve in the reduced problem some tricky properties of the electric field, such as the jump of its normal component across the interface between the two domains, and to incorporate in a natural way the interface conditions in the coupling terms. Moreover, we only required a reasonable assumption on the concentration of charge on each section of the gas domain and derived the corresponding profile of an electric potential produced by it. This way, we are not forced to approximate the potential as constant on sections, and consequently lose information on its profile and disregard the continuity condition on the interface.	\\
	We have validated the reduced model by comparing the approximated and exact solution on a simple geometry as the radius of the original three-dimensional gas domain decreases (Section~\ref{section:TC1}) and then qualitatively observed the solution on more complex geometries, such as an actual electrical treeing.
	The geometrical reduction proves crucial in more realistic cases: it allowed us to simulate this on a standard laptop in about half and hour, while the three-dimensional mesh of the defect could hardly be created.\\
	In order to further reduce computing time future work could address the improvement of the preconditioner proposed in Section~\ref{section:NM:solver}, since a more tailored one would make the GMRES require less iterations before convergence. Future work also includes the possibility to explore the Extended Finite Element Methods (XFEM) to better fit the singularity of the three-dimensional fields around the lines and a coupling with time-dependent evolution of the charge densities in the gas, requiring a model reduction as well.
	
	\newpage
	\printbibliography[
	heading=bibintoc,
	title={References}
	]
	
\end{document}